\newcommand{\nc}{\newcommand}
\nc{\les}{\lesssim}
\nc{\nit}{\noindent}
\nc{\nn}{\nonumber}
\nc{\D}{\partial}
\nc{\diff}[2]{\frac{d #1}{d #2}}
\nc{\diffn}[3]{\frac{d^{#3} #1}{d {#2}^{#3}}}
\nc{\pdiff}[2]{\frac{\partial #1}{\partial #2}}
\nc{\pdiffn}[3]{\frac{\partial^{#3} #1}{\partial{#2}^{#3}}}
\nc{\abs}[1] {\lvert #1 \rvert}
\nc{\cAc}{{\cal A}_c}
\nc{\cE}{{\cal E}}
\nc{\cF}{{\cal F}}
\nc{\cP}{{\cal P}}
\nc{\cV}{{\cal V}}
\nc{\cQ}{{\cal Q}}
\nc{\cGin}{{\cal G}_{\rm in}}
\nc{\cGout}{{\cal G}_{\rm out}}
\nc{\cO}{{\cal O}}
\nc{\Lav}{{\cal L}_{\rm av}}
\nc{\cL}{{\cal L}}
\nc{\cB}{{\cal B}}
\nc{\cZ}{{\cal Z}}
\nc{\cR}{{\cal R}}
\nc{\cT}{{\cal T}}
\nc{\cY}{{\cal Y}}
\nc{\cX}{{\cal X}}
\nc{\cXT}{{{\cal X}(T)}}
\nc{\cBT}{{{\cal B}(T)}}
\nc{\vD}{{\vec \mathcal{D}}}
\nc{\efield}{\mathcal{E}}
\nc{\vE}{{\vec \efield}}
\nc{\vB}{{\vec \mathcal{B}}}
\nc{\vH}{{\vec \mathcal{H}}}
\nc{\cH}{\mathcal H}
\nc{\ty}{{\tilde y}}
\nc{\tu}{{\tilde u}}
\nc{\tV}{{\tilde V}}
\nc{\Pc}{{\bf P_c}}
\nc{\bx}{{\bf x}}
\nc{\bX}{{\bf X}}
\nc{\bXYZ}{{\bf XYZ}}
\nc{\bY}{{\bf Y}}
\nc{\bF}{{\bf F}}
\nc{\bS}{{\bf S}}
\nc{\dV}{{\delta V}}
\nc{\dE}{{\delta E}}
\nc{\TT}{{\Theta}}
\nc{\dPsi}{{\delta\Psi}}
\nc{\order}{{\cal O}}
\nc{\Rout}{R_{\rm out}}
\nc{\eplus}{e_+}
\nc{\eminus}{e_-}
\nc{\epm}{e_\pm}
\nc{\eps}{\varepsilon}
\nc{\vnabla}{{\vec\nabla}}
\nc{\G}{\Gamma}
\nc{\w}{\omega}
\nc{\mh}{h}
\nc{\mg}{g}
\nc{\vphi}{\varphi}
\nc{\tlambda}{\tilde\lambda}
\nc{\be}{\begin{equation}}
\nc{\ee}{\end{equation}}
\nc{\ba}{\begin{eqnarray}}
\nc{\ea}{\end{eqnarray}}
\nc{\g}{\gamma}
\nc{\ol}{\overline}
\newtheorem{theorem}{Theorem}[section]
\newtheorem{lemma}[theorem]{Lemma}
\newtheorem{prop}[theorem]{Proposition}
\newtheorem{corollary}[theorem]{Corollary}
\newtheorem{defin}[theorem]{Definition}
\nc{\pT}{\partial_T}
\nc{\pz}{\partial_z}
\nc{\pt}{\partial_t}
\nc{\la}{\langle}
\nc{\ra}{\rangle}
\nc{\infint}{\int_{-\infty}^{\infty}}
\nc{\halfwidth}{6.5cm}
\nc{\figwidth}{10cm}
\newcommand{\f}{\frac}
\nc{\nlayers}{L} \nc{\nsectors}{M}
\nc{\indicator}{\mathbf{1}}
\nc{\Rhole}{R_{\rm hole}}
\nc{\Rring}{R_{\rm ring}}
\nc{\neff}{n_{\rm eff}}
\nc{\Frem}{F_{\rm rem}}
\nc{\R}{\mathbb R}
\nc{\Z}{\mathbb Z}
\nc{\C}{\mathbb C}
\nc{\DD}{\Delta}
\nc{\cD}{\mathcal D}
\nc{\lnorm}{\left\|}
\nc{\rnorm}{\right\|}
\nc{\rnormp}{\right\|_{\ell^{p,\eps}}}
\nc{\rar}{\rightarrow}
\date{\today}
\begin{document}

\begin{abstract}
We consider the  non-selfadjoint operator
\[ \cH =
\left[\begin{array}{cc} -\Delta + \mu-V_1 & -V_2\\
V_2 & \Delta - \mu + V_1
\end{array}
\right]
\]
where $\mu>0$ and $V_1,V_2$ are real-valued decaying potentials. Such operators arise when linearizing a focusing NLS equation around
a standing wave.
Under natural spectral assumptions we obtain   $L^1(\R^2)\times L^1(\R^2)\to
L^\infty(\R^2)\times L^\infty(\R^2)$ dispersive decay estimates for   the evolution $e^{it\cH}P_{ac}$.
We also obtain the following weighted estimate
$$
\|w^{-1} e^{it\cH}P_{ac}f\|_{L^\infty(\R^2)\times L^\infty(\R^2)}\les \f1{|t|\log^2(|t|)} \|w f\|_{L^1(\R^2)\times L^1(\R^2)},\,\,\,\,\,\,\,\, |t| >2,
$$
with $w(x)=\log^2(2+|x|)$.
\end{abstract}

\title[Dispersive Estimates for Matrix Schr\"odinger Operators]{\textit{Dispersive estimates for
matrix Schr\"{o}dinger operators in dimension two}}

\author[M.~B. Erdo\smash{\u{g}}an, W.~R. Green]{M. Burak Erdo\smash{\u{g}}an and William~R. Green}

\address{Department of Mathematics \\
University of Illinois \\
Urbana, IL 61801, U.S.A.}
\email{berdogan@math.uiuc.edu}
\address{Department of Mathematics\\
Rose-Hulman Institute of Technology\\
Terre Haute, IN 47803, U.S.A.}
\email{green@rose-hulman.edu}

\maketitle
\section{Introduction}
The free Schr\"{o}dinger evolution on $\mathbb R^d$,
\be\label{freedecay}
 e^{-it\Delta}f (x)= C_d \frac{1}{t^{d/2}}\int_{\R^d} e^{-i|x-y|^2/4t}f(y) dy,
\ee
satisfies the dispersive estimate
$$\| e^{-it\Delta}f \|_\infty \lesssim \f1{|t|^{d/2}} \| f\|_1.$$
In recent years many authors (see, e.g.,  \cite{JSS,RodSch,GS,Sc2,GV,Yaj,FY,CCV,EG1,Gr,BG},
 and the survey article \cite{Scs}) worked on the problem of extending this bound to the perturbed Schr\"{o}dinger operator $H=-\Delta+V$, where $V$ is a real-valued potential with sufficient decay at infinity (some smoothness is required for $d>3$). Since the perturbed operator may have negative point spectrum one needs to consider $e^{itH}P_{ac}(H)$, where $P_{ac}(H)$ is the orthogonal projection onto the absolutely continuous subspace of $L^2(\R^d)$.
Another common  assumption is that zero is a regular point of the spectrum of $H$.

We note that the $L^1\to L^\infty$ estimates were preceded by somehow weaker estimates on  weighted $L^2$ spaces,  see, e.g.,
\cite{Rauch,JenKat,Mur}.

Although the $L^1\to L^\infty$ estimates are very well studied in the three dimensional case, there are not   many results   in dimension two.
In \cite{Sc2}, Schlag proved that
\begin{align}\label{schresult}
\|e^{itH}P_{ac}\|_{L^1(\R^2)\to L^\infty(\R^2)}\lesssim |t|^{-1}
\end{align}
under the decay assumption $|V|\lesssim \langle x\rangle^{-3-}$ and the assumption that zero is a regular point of the spectrum.  For the case when zero is not regular, see \cite{EG}. Yajima, \cite{Yaj2}, established that the wave operators are bounded on $L^p(\R^2)$ for $1<p<\infty$ if zero is regular.  The hypotheses
on the potential $V$ were
relaxed slightly in \cite{JenYaj}.

Note that the decay rate in \eqref{freedecay} is not integrable at infinity for $d=1,2$. However, in dimensions $d=1$ and $d=2$,  zero is not a regular point of the spectrum of the Laplacian (the constant function is a resonance). Therefore, for the perturbed operator $-\Delta+V$, one may expect to have a faster dispersive decay at infinity if zero is regular. Indeed, in \cite[Theorem 7.6]{Mur}, Murata proved that if zero is a regular point of the spectrum, then for $|t|>2$
\begin{align*}
\| w_1^{-1}e^{itH}P_{ac}(H) f\|_{L^2(\R^1)} &\les |t|^{-\f32} \|w_1 f\|_{L^2(\R^1)},\\
\| w_2^{-1}e^{itH}P_{ac}(H) f\|_{L^2(\R^2)} &\les |t|^{-1}(\log |t|)^{-2}\|w_2 f\|_{L^2(\R^2)}.
\end{align*}
 Here $w_1$ and $w_2$ are weight functions growing at a polynomial rate at infinity. It is also assumed that the potential decays at a polynomial rate at infinity (for $d=2$, it suffices to assume that $w_2(x)=\la x\ra^{-3-}$ and  $|V(x)|\les \la x\ra^{-6-}$ where $\la x\ra:= (1+|x|^2)^\f12$).
This type of estimates are very useful in the study of nonlinear asymptotic stability of (multi) solitons in lower dimensions since the dispersive decay rate  in time is integrable at infinity (see, e.g., \cite{scns,KZ}). Also see \cite{SW1,BP,PW,Wed} for other applications of weighted dispersive estimates to nonlinear PDEs.

In \cite{Scs},
Schlag extended Murata's result for $d=1$ to the $L^1\to L^\infty$ setting (also see \cite{Gtrans} for an improved result). In \cite{EG2}, the authors obtained an analogous estimate for $d=2$:
If zero is a regular point of the spectrum of $H$, then
\begin{align}\label{scalarcase}
	\|w^{-1} e^{itH}P_{ac}(H) f\|_{L^\infty(\R^2)}\les \frac{1}{|t|\log^2(|t|)} \|w f\|_{L^1(\R^2)}, \,\,\,\,\,\,\,\, |t|>2,
\end{align}
with $w(x)=\log^2(2+|x|)$ provided $ |  V(x)|\les \la x\ra^{-3-} $.

In this paper we extend Schlag's result \eqref{schresult} and our result \eqref{scalarcase} for the $2d$ scalar Schr\"odinger operator to  the $2d$ non self-adjoint matrix Schr\"odinger operator
\begin{align}\label{matrix defn}
  \mathcal H = \mathcal H_0+V=\left[ \begin{array}{cc} -\Delta+\mu & 0 \\ 0 & \Delta-\mu\end{array}\right]
  +\left[ \begin{array}{ll} -V_1 & -V_2 \\ V_2 & V_1 \end{array} \right],\,\,\,\,\,\mu>0.
\end{align}
Such operators appear naturally as linearizations of a nonlinear
Schr\"odinger equation around a standing wave. Dispersive estimates in the context of such
linearizations were obtained in  \cite{Cuc, RSS1, Sch1, ES, CT,Marz, Gr}.

Note that, by Weyl's criterion and the decay assumption on $V_1$ and $V_2$ below,  the essential spectrum of $\mathcal H$ is
$(-\infty,-\mu]\cup[\mu,\infty)$.
Recall the Pauli spin matrix
\begin{align*}
	\sigma_3=\left[\begin{array}{cc} 1 & 0\\ 0 & -1\end{array}\right].
\end{align*}
As in \cite{ES}, we make the following assumptions:

\noindent  A1) $-\sigma_3 V$ is a positive matrix,\\
  A2) $L_-=-\Delta+\mu-V_1+V_2\geq 0$, \\ 
  A3) $|V_1(x)|+|V_2(x)|\lesssim \langle x\rangle^{-\beta}$ for some $\beta>3$,\\
  A4) There are no embedded eigenvalues in $(-\infty, -\mu)\cup (\mu,\infty)$.\\
  A5) The threshold points $\pm \mu$ are regular points of the spectrum of $\cH$, see Definition~\ref{def:regular} below.

As it was noted in \cite{ES}, the first three assumptions are known to hold in the case of the linearized nonlinear Schr\"{o}dinger equation (NLS) when
the linearization is performed about a positive ground state standing wave.  Let, for some $\mu>0$, $\psi(t,x)=e^{it\mu}\phi(x)$ be 
  a standing wave solution of the NLS 
\begin{align}\label{NLS}
	i\partial_t \psi +\Delta \psi+ |\psi|^{2\gamma} \psi=0,
\end{align}
for some $\gamma>0$.  Here $\phi$ is a ground state:
\begin{align*}
	&\mu \phi-\Delta \phi=\phi^{2\gamma+1}, \qquad\phi>0.
\end{align*}
It was proven, see for example \cite{Strauss,BerLi}, that the
ground state solutions exist and further are
positive, smooth, radial,
exponentially decaying functions, see \cite{ES} for further discussion.

Linearizing about this ground state yields the matrix Schr\"{o}dinger equation with potentials
$V_1=(\gamma+1)\phi^{2\gamma}$ and $V_2=\gamma \phi^{2\gamma}$. Note that $V_1>0$ and $V_1>|V_2|$, which is the same as Assumption~A1).
Assumption~A2) holds because of the exponential decay of $\phi$. Also note that $L_- = -\Delta+\mu - \phi^{2\gamma}\geq 0$, since 
$L_-\phi=0$ and   $\phi>0$.
The assumption~A4) seems to hold for this example in the three-dimensional case as evidenced in the numerical studies \cite{DemSch,MS}.

The assumption~A5) is also standard, since the behavior of the resolvent
near the thresholds, $\pm\mu$, determine the decay rate (see \cite{Scs,EG} for the scalar case). We do not consider the case when the thresholds
$\pm \mu$ are not regular in this paper.

 Our main result is the following
\begin{theorem}\label{thm:main}
Under the assumptions A1) -- A5), we have
\be\label{nonweigted}
\| e^{it\cH}P_{ac}f\|_{L^\infty(\R^2)\times L^\infty(\R^2)}\les \f1{|t| } \| f\|_{L^1(\R^2)\times L^1(\R^2)},
\ee
and
\be\label{weighted}
\|w^{-1} e^{it\cH}P_{ac}f\|_{L^\infty(\R^2)\times L^\infty(\R^2)}\les \f1{|t|\log^2(|t|)} \|w f\|_{L^1(\R^2)\times L^1(\R^2)},\,\,\,\,\,\,\,\, |t| >2,
\ee
where $w(x)=\log^2(2+|x|)$.
\end{theorem}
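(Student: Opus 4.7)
The plan is to represent the evolution via the Stone formula
\be
e^{it\cH}P_{ac}f = \f{1}{2\pi i}\int_{|\l|\ge\mu}e^{it\l}\bigl[R_\cH^+(\l)-R_\cH^-(\l)\bigr]f\,d\l,
\ee
with $R_\cH^\pm(\l)=\lim_{\eps\downarrow 0}(\cH-\l\mp i\eps)^{-1}$. A direct computation gives $\sigma_1\cH\sigma_1=-\cH$, where $\sigma_1$ is the $2\times 2$ off-diagonal Pauli matrix; this symmetry transfers the integral over $(-\infty,-\mu]$ to the integral over $[\mu,\infty)$, so it suffices to analyze the latter. I would then split $[\mu,\infty)$ into a low-energy neighborhood $\mu\le\l\le\mu+c$ of the threshold and a high-energy regime $\l\ge\mu+c$ using a smooth cutoff $\chi(\l-\mu)$.

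For the high-energy regime I would iterate the resolvent identity to write $R_\cH^\pm$ as a finite Born series in the free matrix resolvent $R_0^\pm(\l)$, plus a remainder controlled by a weighted $L^2$ limiting absorption principle (guaranteed by A3)--A5), cf.\ \cite{ES}). Each term of the series is a 2D oscillatory integral with Hankel-function kernels; since $\l$ is bounded away from both thresholds $\pm\mu$, the Schlag-type stationary-phase and van der Corput arguments from \cite{Sc2,EG2} apply termwise and yield the $|t|^{-1}$ decay using only the $\la x\ra^{-3-}$ decay of $V$ granted by A3).

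The core of the argument is the low-energy analysis near $\l=\mu$, which I would handle via the symmetric resolvent identity
\be
R_\cH^\pm(\l)=R_0^\pm(\l)-R_0^\pm(\l)\,v\bigl(M^\pm(\l)\bigr)^{-1}U\,v\,R_0^\pm(\l),\qquad V=v^*Uv,
\ee
with $v=|V|^{1/2}$ and $M^\pm(\l)=U+vR_0^\pm(\l)v$. The essential 2D subtlety is that only the $(1,1)$ diagonal block of $R_0^\pm(\l)$ is singular at $\l=\mu$, with the standard logarithmic threshold expansion, while the $(2,2)$ block is analytic there. Consequently $M^\pm(\l)$ is itself logarithmically singular, and its inversion requires peeling off a rank-one block carrying the $\log(\l-\mu)$ singularity before applying Neumann iteration to the remainder. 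Assumption A5) is the precise hypothesis that makes this decomposition possible, mirroring the scalar procedure of \cite{EG2} but adapted so that only the $(1,1)$ entry contributes the singular block. Substituted into the Stone formula, the result is an oscillatory integral of the form $\int e^{it\l}\chi(\l-\mu)g^\pm(\l)F(\l,x,y)\,d\l$ with $g^\pm(\l)\sim c\log(\l-\mu)$; a single integration by parts in $\l$ gives the $|t|^{-1}$ decay in \eqref{nonweigted}.

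For the weighted estimate \eqref{weighted} I would follow the scalar analysis of \cite{EG2}: the weight $w(x)=\log^2(2+|x|)$ absorbs the spatial logarithms $\log(1+|x|)$ and $\log(1+|y|)$ appearing in $F(\l,x,y)$ from the Hankel expansion, while the additional $\log^{-2}|t|$ gain comes from a second integration by parts against the slowly-varying logarithmic factor in $(M^\pm(\l))^{-1}$. The main obstacle is the low-energy step: the matrix threshold expansion and symmetric resolvent inversion must be carried out cleanly enough that the $\log(\l-\mu)$ and $\log|x-y|$ singularities are separated and the contributions of the analytic $(2,2)$ block and the singular $(1,1)$ block can be tracked independently through the stationary-phase bookkeeping. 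Once this is done, reassembling via the Stone formula and the high/low split yields both \eqref{nonweigted} and \eqref{weighted}.
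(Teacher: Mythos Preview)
Your overall architecture---Stone formula, reduction to $[\mu,\infty)$, low/high energy split, Born series plus limiting absorption for high energies, symmetric resolvent identity plus threshold expansion for low energies---matches the paper's. But two of your stated mechanisms are not how the estimates are actually obtained, and if you tried to execute them as written you would get stuck.

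\textbf{Unweighted bound.} You say ``a single integration by parts in $\lambda$ gives the $|t|^{-1}$ decay.'' This is not enough. The free resolvent kernel contains oscillatory factors $e^{\pm i\lambda|x-y|}$ for $\lambda|x-y|\gtrsim 1$, so a naive integration by parts against $e^{it\lambda^2}$ produces powers of $|x-y|$ that cannot be bounded uniformly in $x,y$. The paper (following \cite{Sc2}) handles this with a stationary-phase lemma (Lemma~\ref{stat phase}) after writing $J_0(\lambda|x-x_1|)$ as a sum of a compactly supported piece and oscillatory pieces $e^{\pm i\lambda|x-x_1|}\omega_\pm$, and then separately treating the analytic lower-right block $R_2(\lambda^2)$ of $\mathcal R_0^\pm$. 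The orthogonality $Qv_2M_{11}=M_{11}v_1Q=0$ is also used to kill the most singular terms in the $QD_0Q$ contribution. None of this is a single integration by parts.

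\textbf{Weighted bound.} You attribute the extra $\log^{-2}|t|$ to ``a second integration by parts against the slowly-varying logarithmic factor in $(M^\pm(\lambda))^{-1}$.'' That is not the mechanism. What actually happens is a \emph{cancellation of the leading $1/t$ terms}: after the substitution $z=\mu+\lambda^2$, the free contribution $\mathcal R_0^+-\mathcal R_0^-$ produces $-\tfrac{1}{4t}M_{11}+\text{(faster)}$, and the $S$-term in $(M^\pm)^{-1}=h^\pm(\lambda)^{-1}S+QD_0Q+E^\pm$ produces, via the identity $M_{11}v_1Sv_2M_{11}=-\|a^2+b^2\|_1 M_{11}$ and the algebra of $g^\pm,h^\pm$, exactly $+\tfrac{1}{4t}M_{11}+\text{(faster)}$. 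After this cancellation the surviving low-energy integrands are of the form $\chi(\lambda)\big((\log\lambda+c_1)^2+c_2^2\big)^{-1}$ times bounded spatial kernels, and the bound $\int_0^\infty e^{it\lambda^2}\lambda\chi(\lambda)\big((\log\lambda+c_1)^2+c_2^2\big)^{-1}d\lambda=O\big(t^{-1}\log^{-2}t\big)$ (Lemma~4.5 of \cite{EG2}) gives the gain. The logarithmic weight $w(x)$ enters only through $|\mathcal G_0(x,x_1)|\lesssim \sqrt{w(x)}+k(x,x_1)$ in the spatial estimates, not through any additional integration by parts. Your proposal also omits the Lipschitz-in-$\lambda$ bounds on the error $E^\pm(\lambda)$ that are needed to push the remaining terms to $t^{-1-\alpha}$.

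A minor point: your factorization $V=v^*Uv$ with $v=|V|^{1/2}$ is ambiguous for the non-self-adjoint matrix $V$. The paper uses assumption A1) to write $V=v_1v_2=(-\sigma_3 v)v$ with an explicit real symmetric $v$, which is what makes $M^\pm(\lambda)=I+v_2\mathcal R_0^\pm v_1$ and the projection $P$ onto $\mathrm{span}\{(a,b)^T\}$ come out cleanly.
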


In an attempt for brevity of this paper, we will try to use the lemmas from the scalar results \cite{Scs,EG2} as much as possible.
The most important step in the proof of Theorem~\ref{thm:main} is the analysis of the resolvent around the thresholds $\pm\mu$.
Once we obtain these expansions, it will be possible to relate and/or reduce the proof to the scalar case for most of the terms.

In addition to being of mathematical interest, we wish to note
that such estimates above are of use in the study of non-linear
PDEs, particularly the NLS.  Much work studying the
NLS linearizes the equation about groundstate or
standing wave solutions.
We note, in particular,
\cite{PW,GJLS,Wed,Cuc,KZ,Miz3,CucMiz,CT,Sch1,Becman2} and the
survey paper \cite{scns}.

\section{Spectral Theory of  Matrix Schr\"{o}dinger  Operators}\label{ST sect}

Consider the matrix Schr\"{o}dinger operator, given in \eqref{matrix defn},
on $L^2(\R^n)\times L^2(\R^n)$.  Here $\mu>0$ and $V_1$, $V_2$ are real-valued decaying potentials.  It follows from Weyl's criterion
 that the essential spectrum of $\mathcal H$ is $(-\infty,-\mu]\cup [\mu,\infty)$,
see e.g. \cite{HS, RS1}.

For the spectral theory of the matrix Schr\"{o}dinger operator, we refer the reader to
\cite{ES}.  Since most of the proofs presented in \cite{ES} are independent of dimension,  we cite the
results without proof.  Further spectral theory for the three dimensional
case can be found in \cite{AzSimp,CHS}.

\begin{lemma} \cite[Lemma~3]{ES} \label{spectrum lemma}
Let $\beta>0$ be arbitrary in A2), then the essential spectrum of $\mathcal H$ equals
  $(-\infty, -\mu]\cup[\mu,\infty)$.  Moreover $spec(\mathcal H)=-spec(\mathcal H)=\overline{spec(\mathcal H)}
  =spec(\mathcal H^*)$, and $spec(\mathcal H)\subset \R\cup i\R$.  The discrete spectrum of $\mathcal H$ consists
  of eigenvalues $\{z_j\}_{j=1}^{N}$, $0\leq N\leq\infty$, of finite multiplicity.  For each $z_j\neq 0$, the
  algebraic and geometric multiplicity coincide and $Ran(\mathcal H-z_j)$ is closed.  The zero eigenvalue
  has finite algebraic multiplicity, i.e., the generalized eigenspace $\cup_{k=1}^{\infty} ker(\mathcal H^k)$
  has finite dimension.  In fact, there is a finite $m\geq 1$ so that $ker(\mathcal H^k)=ker(\mathcal H^{k+1})$
  for all $k\geq m$.

\end{lemma}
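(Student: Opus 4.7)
The plan is to establish the six assertions in turn, exploiting the algebraic structure of $\cH$ and the decay and positivity hypotheses on $V_1, V_2$.

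First, for the essential spectrum I would apply Weyl's theorem on stability under relatively compact perturbations. Since $\cH_0 = (-\Delta + \mu)\sigma_3$, direct Fourier analysis gives that the essential spectrum of $\cH_0$ equals $(-\infty,-\mu] \cup [\mu,\infty)$. The polynomial decay in A3) (in fact any positive $\beta$ suffices here) implies that $V(\cH_0 - z)^{-1}$ is compact on $L^2(\R^2) \times L^2(\R^2)$ for $z$ in the resolvent set, via the standard Rellich-type compactness of $\la x\ra^{-\beta}(\cH_0 - z)^{-1}$. Weyl's theorem then transfers the essential spectrum to $\cH$.

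Second, for the three symmetry identities I would verify by direct $2\times 2$ block multiplication the operator identities $\sigma_1 \cH \sigma_1 = -\cH$ with $\sigma_1 = \left(\begin{smallmatrix} 0 & 1 \\ 1 & 0 \end{smallmatrix}\right)$, and $\sigma_3 \cH \sigma_3 = \cH^*$. The first yields $\text{spec}(\cH) = -\text{spec}(\cH)$; the real-valuedness of $V_1, V_2$ gives, after componentwise conjugation, $\overline\cH = \cH$, hence $\text{spec}(\cH) = \overline{\text{spec}(\cH)}$; the second combined with the general fact $\text{spec}(\cH^*) = \overline{\text{spec}(\cH)}$ yields $\text{spec}(\cH) = \text{spec}(\cH^*)$.

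Third, to prove $\text{spec}(\cH) \subset \R \cup i\R$ I would use Assumptions A1) and A2). If $\cH \psi = z\psi$ with $\psi = (u,v)^T$, setting $f = u+v$, $g = u-v$ one checks
\[ L_+ f = z g, \qquad L_- g = z f, \]
where $L_\pm = -\Delta + \mu - V_1 \mp V_2$, so $L_- L_+ f = z^2 f$. Since $L_- \geq 0$ by A2), writing $h = L_-^{1/2} f$ turns this into $A h = z^2 h$ for the self-adjoint operator $A := L_-^{1/2} L_+ L_-^{1/2}$, forcing $z^2 \in \R$ and hence $z \in \R \cup i\R$. The degenerate case $f \in \ker L_-$ is handled by a standard projection or limiting argument.

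Fourth, for the structural claims about the discrete spectrum I would apply the analytic Fredholm theorem to $(\cH - z)^{-1} = (\cH_0 - z)^{-1}\bigl(I + V(\cH_0 - z)^{-1}\bigr)^{-1}$: the compact-operator-valued family $V(\cH_0 - z)^{-1}$ is analytic off the essential spectrum, so its Fredholm inverse is meromorphic with finite-rank residues, giving discreteness, finite multiplicity, and closedness of $\text{Ran}(\cH - z_j)$. The main obstacle, I expect, is the coincidence of algebraic and geometric multiplicity at $z_j \neq 0$: this I would handle via a standard pairing argument using the self-adjointness of $\sigma_3 \cH$, which shows that a length-two Jordan chain at nonzero $z_j$ would force a vanishing in the $\sigma_3$-quadratic form incompatible with $z_j \neq 0$. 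Finiteness of the generalized kernel at $0$ and the stabilization of $\ker \cH^k$ then follow because the increasing chain $\ker \cH^k$ is contained in the range of the finite-rank Riesz projection at $0$ supplied by the Fredholm analysis above.
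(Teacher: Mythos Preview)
The paper does not give its own proof of this lemma: it is quoted verbatim from \cite[Lemma~3]{ES}, and the surrounding text explicitly says the spectral-theoretic results are cited from \cite{ES} ``without proof.'' So there is no in-paper argument to compare your proposal against; your outline is essentially the standard one used in \cite{ES} for the Weyl-theorem step, the $\sigma_1,\sigma_3$ symmetry identities, the $L_-L_+$ reduction for $\text{spec}(\cH)\subset\R\cup i\R$, and the analytic Fredholm alternative for discreteness and finite rank of the Riesz projections.

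One genuine gap: your argument for ``algebraic $=$ geometric multiplicity at $z_j\neq 0$'' via the $\sigma_3$-pairing does not close as stated. If $(\cH-z_j)\psi_1=0$ and $(\cH-z_j)\psi_2=\psi_1$, the self-adjointness of $\sigma_3\cH$ only yields $\la\sigma_3\psi_1,\psi_1\ra=0$ (for real $z_j$), and for purely imaginary $z_j$ one already has $\la\sigma_3\psi_1,\psi_1\ra=0$ automatically; neither is a contradiction without an \emph{additional} nondegeneracy statement about the indefinite form on the eigenspace, which you have not supplied and which is not free. The route taken in \cite{ES} instead reuses the factorization you already introduced in your third step: a Jordan chain for $\cH$ at $z_j\neq 0$ would produce a Jordan chain for the self-adjoint operator $L_-^{1/2}L_+L_-^{1/2}$ at the real eigenvalue $z_j^2$, which is impossible. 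This also makes transparent where A1) and A2) enter. A minor further remark: your argument for $\text{spec}(\cH)\subset\R\cup i\R$ only treats eigenvalues; you should say explicitly that this suffices because the essential spectrum has already been identified as real and the remainder of the spectrum is discrete by the Fredholm step.
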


As in the scalar case, see \cite{GS, EG} etc., the proofs will hinge on the limiting absorption principle of
Agmon \cite{agmon}.  We now state such a result from \cite{ES} for $(\mathcal H-z)^{-1}$ for $|z|>\mu$. Define the space
\begin{align*}
  X_{\sigma}:=L^{2,\sigma}(\R^2)\times L^{2,\sigma}(\R^2),\,\,\,\text{ where } L^{2,\sigma}(\R^n)=\{f:\langle x\rangle^{\sigma} f\in L^2(\R^n)\}.
\end{align*}
It is clear that $X_{\sigma}^*=X_{-\sigma}$.  The limiting absorption principle of Agmon is   formulated below.

\begin{prop}\label{LAP2}

  Let $\beta>1$, $\sigma>\frac{1}{2}$ and fix an arbitrary $\lambda_0>\mu$.  Then
  \begin{align}\label{LAP eqn}
    \sup_{|\lambda|\geq \lambda_0, \epsilon\geq 0} |\lambda|^{\frac{1}{2}} \|(\mathcal H-(\lambda\pm i\epsilon))^{-1}\|<\infty
  \end{align}
  where the norm is in $X_{\sigma}\to X_{-\sigma}$.

\end{prop}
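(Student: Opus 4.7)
The plan is to obtain the limiting absorption principle for $\mathcal H$ from the corresponding estimate for $\mathcal H_0$ via a compact perturbation / Fredholm alternative argument, exactly as in Agmon's original proof in the scalar case.

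First I would establish the proposition for the free operator $\mathcal H_0$. Since $\mathcal H_0$ is diagonal, we have
\[
  (\mathcal H_0 - z)^{-1} = \begin{pmatrix} (-\Delta + \mu - z)^{-1} & 0 \\ 0 & -(-\Delta + \mu + z)^{-1} \end{pmatrix}.
\]
For $z = \lambda \pm i\epsilon$ with $\lambda \geq \lambda_0 > \mu$, the $(2,2)$-block has spectral parameter $-(\mu + \lambda) < -2\mu$, which lies in the resolvent set of $-\Delta$, so that block is bounded in $L^2 \to L^2$ by $(\lambda + \mu)^{-1}$. The $(1,1)$-block is the scalar free resolvent at spectral parameter $\lambda - \mu \geq \lambda_0 - \mu > 0$, so Agmon's scalar LAP in dimension two gives $\|(-\Delta - (\lambda - \mu) \mp i\epsilon)^{-1}\|_{L^{2,\sigma}\to L^{2,-\sigma}} \lesssim (\lambda-\mu)^{-1/2}$ uniformly. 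A parallel analysis handles $\lambda \leq -\lambda_0$, and combining the two diagonal entries yields the desired $|\lambda|^{1/2}$ bound for $\|(\mathcal H_0 - z)^{-1}\|_{X_\sigma\to X_{-\sigma}}$.

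Next I would pass from $\mathcal H_0$ to $\mathcal H$ via the second resolvent identity
\[
  (\mathcal H - z)^{-1} = \bigl(I + (\mathcal H_0 - z)^{-1} V\bigr)^{-1} (\mathcal H_0 - z)^{-1}.
\]
Set $T(z) := (\mathcal H_0 - z)^{-1} V$ as an operator on $X_{-\sigma}$. Since $|V| \lesssim \langle x \rangle^{-\beta}$ with $\beta > 1$, multiplication by $V$ maps $X_{-\sigma} \to X_\sigma$ for $\sigma$ close to $\tfrac12$, and the free LAP then shows $T(z)$ is uniformly bounded on $X_{-\sigma}$ for $|\lambda|\geq \lambda_0$, $\epsilon\geq 0$. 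The decay of $V$ also makes $T(z)$ compact on $X_{-\sigma}$, and a standard argument (based on the resolvent expansion and continuity of the free limit resolvents in appropriate weighted operator norms) shows that the map $z \mapsto T(z)$ is norm-continuous up to the real axis on the relevant half-planes, with $\|T(z)\|\to 0$ as $|\lambda|\to\infty$.

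The heart of the matter is inverting $I + T(z)$. For $\epsilon>0$ invertibility is automatic because $z$ is in the resolvent set of the closed operator $\mathcal H$ on $L^2\times L^2$. For $\epsilon=0$ one argues by Fredholm alternative: if $(I + T(\lambda\pm i0))f = 0$ for some nonzero $f \in X_{-\sigma}$, then $u := f$ satisfies the distributional equation $\mathcal H u = \lambda u$, and a boot-strap/Agmon-type argument (using the decay of $V$ and the outgoing/incoming Sommerfeld-type radiation conditions encoded by the $\pm i0$ boundary value of $(\mathcal H_0-z)^{-1}$) upgrades $u$ to an $L^2$ eigenfunction of $\mathcal H$ at $\lambda$, contradicting assumption A4. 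Combined with the norm-continuity of $T(z)$ and the vanishing at infinity, this yields $\sup_{|\lambda|\geq\lambda_0,\epsilon\geq 0}\|(I+T(z))^{-1}\|_{X_{-\sigma}\to X_{-\sigma}} < \infty$. Multiplying by the free estimate gives \eqref{LAP eqn}.

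The main obstacle is the Fredholm step: because $\mathcal H$ is not self-adjoint one cannot simply appeal to orthogonality to exclude solutions, and the argument that a distributional solution in $X_{-\sigma}$ with the outgoing radiation condition must be an honest $L^2$ eigenfunction requires carefully tracking the asymptotic behavior of $(\mathcal H_0-\lambda\pm i0)^{-1}Vf$ at infinity. This is the matrix analogue of Agmon's absence-of-positive-eigenvalues argument, and assumption A4 is invoked precisely at this point.
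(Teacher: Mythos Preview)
Your proposal is correct and follows the standard Agmon-style route: free LAP for the diagonal operator $\mathcal H_0$, then a Fredholm/compactness argument on $X_{-\sigma}$ using the decay of $V$ to invert $I+(\mathcal H_0-z)^{-1}V$, with assumption~A4 invoked to rule out kernel at the boundary values $\lambda\pm i0$. The paper does not supply its own argument but simply cites Lemma~4, Proposition~5, and Corollary~6 of \cite{ES}, which implement exactly this strategy in the matrix setting (the proofs there are dimension-independent), so your approach coincides with the one the paper defers to.
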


\begin{proof}
  See Lemma~4, Proposition~5 and Corollary~6 of \cite{ES}.
\end{proof}

Using the explicit form of the free resolvent $\mathcal R_0 (\lambda)=(\cH_0-\lambda)^{-1}$, $\lambda \not \in (-\infty,\-\mu]\cup[\mu,\infty)$ (see the next section), one can define the limiting operators ($X_{\sigma}\to X_{-\sigma}$)
$$
\mathcal R_0^\pm(\lambda):= \lim_{\epsilon\to 0^+} \mathcal R_0(\lambda\pm i\epsilon), \,\,\,\,\, \lambda\in (-\infty -\mu)\cup(\mu,\infty).
$$
By Proposition~\ref{LAP2}, for fixed $\lambda_0>\mu$,
\be\label{free lap}
\sup_{|\lambda|\geq \lambda_0} |\lambda|^{\frac{1}{2}} \|\mathcal R_0^\pm(\lambda)\|_{X_{\sigma}\to X_{-\sigma}}<\infty.
\ee
One also have the derivative bounds  
\begin{align}\label{lap}
	\sup_{|\lambda|>\lambda_0} \|\partial_\lambda^k \mathcal R_0^{\pm}(\lambda)\|_{X_{\sigma}\to X_{-\sigma}}<\infty,
\end{align}
for $k=0,1,2$ with $\sigma>k+\frac{1}{2}$.

By resolvent identity, one can also define the operators
$$\mathcal R_V^\pm (\lambda):= \lim_{\epsilon\to 0^+} \mathcal R_V(\lambda\pm i\epsilon)=\lim_{\epsilon\to 0^+}  (\cH-(\lambda\pm i\epsilon))^{-1}$$
 for $\lambda\in (-\infty -\mu)\cup(\mu,\infty)$ and they satisfy \eqref{free lap} and \eqref{lap}, see \cite{ES} for details.

We also need the following spectral representation
of the solution operator, see \cite[Lemma~12]{ES}.

\begin{lemma}

	Under the assumptions A1)-A5), we have the representation
	\begin{align}
		e^{it\mathcal H}&=e^{it\mathcal H}P_{ac} +\sum_j e^{it\mathcal H}P_{\lambda_j},\,\,\,\,\,\text{ where }\nn\\
        \label{spec_repr}
            e^{it\mathcal H}P_{ac} &=\frac{1}{2\pi i}\int_{|\lambda|>\mu}
		e^{it\lambda}[\mathcal R_V^+(\lambda)-\mathcal R_V^-(\lambda)]\,d\lambda,
	\end{align}
	and the sum is over the discrete spectrum $\{\lambda_j\}_j$
	and $P_{\lambda_j}$ is the Riesz projection corresponding
	to the eigenvalue $\lambda_j$.
\end{lemma}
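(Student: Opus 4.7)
The plan is to combine the Riesz projection machinery for the discrete spectrum with a Dunford--Taylor contour integral around the essential spectrum. First I would note that $e^{it\cH}$ is well-defined as a $C_0$-group on $L^2(\R^2)\times L^2(\R^2)$: the free part $\cH_0 = \sigma_3(-\Delta+\mu)$ generates the bounded group $\operatorname{diag}\!\bigl(e^{it(-\Delta+\mu)},\,e^{it(\Delta-\mu)}\bigr)$, and $V$ is bounded by A3), so bounded perturbation theory applies. Next, Lemma~\ref{spectrum lemma} tells us the discrete spectrum $\{\lambda_j\}$ is at most countable with finite total algebraic multiplicity (including the generalized kernel at $0$). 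For each $\lambda_j$ I introduce a small Jordan curve $\gamma_j$ isolating $\lambda_j$ from the remainder of the spectrum and define the bounded finite-rank Riesz projection
$$
P_{\lambda_j} \,=\, -\frac{1}{2\pi i}\oint_{\gamma_j}(\cH - z)^{-1}\, dz.
$$
Each $P_{\lambda_j}$ commutes with $\cH$ and hence with $e^{it\cH}$; setting $P_{ac} := I - \sum_j P_{\lambda_j}$ yields a bounded projection onto a complementary invariant subspace and delivers the first splitting $e^{it\cH} = e^{it\cH}P_{ac} + \sum_j e^{it\cH}P_{\lambda_j}$ immediately.

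To identify $e^{it\cH} P_{ac}$ with the stated contour integral, I would set up a rectangular contour $\Gamma_{\eta,R}$ consisting of two horizontal segments at imaginary parts $\pm\eta$ covering the set $\{\lambda\in\R:\,\mu<|\lambda|<R\}$, closed up by short vertical segments at $\pm\mu$ and $\pm R$. Dunford--Taylor calculus applied to the bounded operator $e^{it\cH}P_{ac}$ gives, in the sense of $X_\sigma\to X_{-\sigma}$ for $\sigma>\tfrac12$,
$$
e^{it\cH}P_{ac} \,=\, \lim_{R\to\infty}\lim_{\eta\to 0^+}\frac{1}{2\pi i}\int_{\Gamma_{\eta,R}} e^{it\lambda}\,\mathcal R_V(\lambda)\, d\lambda.
$$
As $\eta\to 0^+$, Proposition~\ref{LAP2} and A5) let the two horizontal pieces collapse onto $\{|\lambda|>\mu\}$ and combine into the jump $\mathcal R_V^+(\lambda) - \mathcal R_V^-(\lambda)$. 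The short vertical segments at $\pm\mu$ vanish in the limit by the threshold regularity A5), which keeps $\mathcal R_V^\pm$ bounded in the relevant weighted norm as $\lambda\to\pm\mu$. The vertical segments at $\pm R$ are handled by one or two integrations by parts in $\lambda$ using the derivative bounds \eqref{lap}, converting the oscillatory factor $e^{it\lambda}$ into powers of $1/(it)$ multiplied by $\partial_\lambda^k\mathcal R_V^\pm$ that vanish at $\pm R$ as $R\to\infty$.

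The main obstacle is that the non-selfadjointness of $\cH$ rules out the classical spectral theorem, so everything must be done via Dunford--Taylor calculus combined with the limiting absorption principle. The delicate points are: (a) establishing convergence of all the limiting integrals in a consistent weak topology, which requires the full strength of \eqref{free lap}--\eqref{lap} together with A5); (b) verifying that the complement of the discrete Riesz projections genuinely coincides with the continuous spectral subspace used in the dispersive estimates, which in turn uses A4) to rule out embedded eigenvalues and A5) to rule out threshold resonances; and (c) checking that the contour deformation picks up no extraneous spectrum off the real axis --- a fact guaranteed by Lemma~\ref{spectrum lemma}, since $\operatorname{spec}(\cH)\subset\R\cup i\R$ and any non-real spectrum consists of isolated eigenvalues already absorbed into the Riesz projections.
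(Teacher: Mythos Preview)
The paper does not prove this lemma; it cites \cite[Lemma~12]{ES}, whose argument is dimension-independent. Your sketch supplies what the paper omits and follows a reasonable template: Riesz projections for the discrete part, then a contour-collapse argument for the continuous part using Proposition~\ref{LAP2} and the threshold regularity A5).

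One step deserves more care. You invoke ``Dunford--Taylor calculus applied to the bounded operator $e^{it\cH}P_{ac}$'' to produce the displayed contour-integral identity, but the standard Dunford calculus requires a closed contour enclosing the relevant spectrum, and the essential spectrum $(-\infty,-\mu]\cup[\mu,\infty)$ is unbounded. Your finite rectangles $\Gamma_{\eta,R}$ do not enclose it, so the displayed equality is not an application of the functional calculus but rather the statement to be proved. The argument in \cite{ES} proceeds instead from the Hille--Yosida side: having established (as you correctly do) that $i\cH$ generates a bounded $C_0$-group, one writes the resolvent as the Laplace transform of the group and inverts, with the limiting absorption bounds and integration by parts used to justify the inversion and to identify the limit with the jump $\mathcal R_V^+-\mathcal R_V^-$. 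Relatedly, your handling of the vertical segments at $\pm R$ is slightly off: with the order of limits you specify, those segments have length $2\eta$ and already vanish as $\eta\to 0^+$; the genuine issue is convergence of the resulting improper integral over $\{|\lambda|>\mu\}$, which is indeed where the oscillation of $e^{it\lambda}$ combined with \eqref{free lap}--\eqref{lap} does the work, essentially as you describe.
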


This representation is to be understood in the weak sense.
That is for $\psi, \phi$ in $W^{2,2}\times W^{2,2}\cap X_{1+}$
we have
\begin{align}
	\la e^{it\cH}\phi,\psi\ra
	= \frac{1}{2\pi i}\int_{|\lambda|>\mu}
	e^{it\lambda}\la[\mathcal R_V^+(\lambda)- \mathcal R_V^-(\lambda)]\phi,\psi\ra\,d\lambda +\sum_j \la e^{it\cH}P_{\lambda_j}\phi,\psi \ra.
	\label{spec rep L2}
\end{align}

In light of this representation, the first claim of Theorem~\ref{thm:main} follows from the following theorem. Let $\chi$ be a smooth cutoff for the interval $[-1,1]$.
\begin{theorem}\label{thm:nonweighted}
Under the assumptions A1) -- A5), we have, for any $t\in \R$,
\be\label{nonweigted1}
\sup_{x,y\in\R^2, L>1} \Big|\int_{|\lambda|>\mu} e^{it\lambda}\chi(\lambda/L)[\mathcal R_V^+(\lambda)-\mathcal R_V^-(\lambda)](x,y)\,d\lambda\Big|\les \frac{1}{|t|}.
\ee
\end{theorem}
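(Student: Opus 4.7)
The plan is to follow the template of the scalar 2D analyses in \cite{Sc2, EG2}, adapted to the matrix setting. First I would localize the $\lambda$-integral with a smooth cutoff $\phi$ supported near the thresholds, writing
\[
\chi(\lambda/L) = \bigl[\phi(\lambda-\mu)+\phi(\lambda+\mu)\bigr] + \bigl[1-\phi(\lambda-\mu)-\phi(\lambda+\mu)\bigr]\chi(\lambda/L),
\]
decomposing the integrand into a low-energy piece supported in a fixed neighbourhood of $\pm\mu$ (independent of $L$) and a high-energy remainder. Because $\text{spec}(\cH)=-\text{spec}(\cH)$ and the two branches of $\mathcal R_V^\pm(\lambda)$ at $\pm\mu$ are intertwined by the Pauli-matrix structure $\sigma_1\cH\sigma_1 = -\cH$-type symmetry, I will argue only near $+\mu$ and reduce the contribution near $-\mu$ by symmetry.

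For the high-energy piece, Proposition~\ref{LAP2} and the derivative bounds \eqref{lap}, together with the resolvent identity $\mathcal R_V^\pm = \mathcal R_0^\pm - \mathcal R_0^\pm V \mathcal R_V^\pm$ and the explicit Hankel-function form of the kernel of $\mathcal R_0^\pm$, yield pointwise $(x,y)$ bounds on $[\mathcal R_V^+-\mathcal R_V^-](\lambda)$ and its first $\lambda$-derivative that are $(L^1)$ integrable in $\lambda$, uniformly in $L>1$. One integration by parts in $\lambda$ against $e^{it\lambda}$ then extracts the $1/|t|$ factor; the boundary term at $|\lambda|\sim L$ involves $\chi'(\lambda/L)$ which, combined with the pointwise resolvent bounds and the decay in $|\lambda|$, is uniformly controlled in $L$.

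For the low-energy piece I would use the symmetric resolvent identity
\[
\mathcal R_V^\pm(\lambda) = \mathcal R_0^\pm(\lambda) - \mathcal R_0^\pm(\lambda)\, v^*\bigl[M^\pm(\lambda)\bigr]^{-1} v\, \mathcal R_0^\pm(\lambda),
\]
where $V = v^* U v$ is a symmetric factorization with $|v|\lesssim \langle x\rangle^{-\beta/2}$ and $M^\pm(\lambda) = U+v\mathcal R_0^\pm(\lambda)v^*$. Near $\lambda=\mu$ the diagonal block $(-\Delta-(\lambda-\mu))^{-1}$ carries a logarithmic threshold singularity of exactly the scalar 2D shape treated in \cite{EG2}, while the companion block $-(-\Delta+(\lambda+\mu))^{-1}$ depends smoothly on $\lambda$. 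Assumption~A5 is invoked precisely to guarantee that $M^\pm(\mu)$ is boundedly invertible, producing a logarithmic asymptotic expansion for $[M^\pm(\lambda)]^{-1}$ as $\lambda\to\mu^+$. Inserting these expansions, the difference $\mathcal R_V^+-\mathcal R_V^-$ splits into a principal oscillatory piece whose dependence on $\lambda$ is of the exact shape handled by the scalar oscillatory-integral lemmas of \cite{Sc2, EG2} (yielding the $1/|t|$ bound outright), plus a remainder whose first $\lambda$-derivative is absolutely integrable and hence gives $1/|t|$ after one integration by parts.

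The main obstacle is producing the threshold expansion of $[M^\pm(\lambda)]^{-1}$ for the $2\times 2$ matrix operator in 2D. The off-diagonal potential $V_2$ couples the singular diagonal block (the one carrying the log) to the regular one, and translating the regularity condition A5 into quantitative invertibility of $M^\pm(\mu)$ in a Banach space where the log-expansion of $[M^\pm(\lambda)]^{-1}$ is valid requires careful Grushin/Feshbach bookkeeping. Once this matrix-valued threshold calculus is in place, the remaining oscillatory-integral analysis reduces, term by term, to scalar 2D estimates already available in \cite{Sc2, EG2}.
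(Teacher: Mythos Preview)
Your overall architecture---split into low and high energy, symmetric resolvent identity with a Feshbach/Grushin expansion of $M^\pm(\lambda)^{-1}$ near the threshold, and reduction to the scalar oscillatory-integral lemmas of \cite{Sc2,EG2}---matches the paper's approach. Two points need correction, one minor and one substantive.

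\textbf{Minor (low energy).} Assumption A5 does not say that $M^\pm(\mu)$ is boundedly invertible; indeed $M^\pm(\lambda)=\widetilde g^\pm(\lambda)P+T+E_1^\pm(\lambda)$ carries a logarithmic blow-up as $\lambda\to 0$ through $\widetilde g^\pm$. What A5 guarantees is the invertibility of $QTQ$ on $Q(L^2\times L^2)$, which is exactly the input the Feshbach scheme needs to produce the expansion $M^\pm(\lambda)^{-1}=h^\pm(\lambda)^{-1}S+QD_0Q+E^\pm(\lambda)$. Your phrase ``$M^\pm(\mu)$ boundedly invertible'' is the wrong formulation of the hypothesis, though the mechanism you have in mind is correct.

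\textbf{Substantive gap (high energy).} Your claim that the limiting absorption principle together with the Hankel form of $\mathcal R_0^\pm$ yields ``pointwise $(x,y)$ bounds on $[\mathcal R_V^+-\mathcal R_V^-](\lambda)$ and its first $\lambda$-derivative that are $L^1$ in $\lambda$'' is not right, and a single integration by parts does not close the argument. The LAP bounds \eqref{free lap}, \eqref{lap} are in weighted $L^2$, not pointwise; more importantly, the scalar free resolvent kernel behaves like $e^{\pm i\lambda|x-x_1|}(\lambda|x-x_1|)^{-1/2}$ for $\lambda|x-x_1|\gtrsim 1$, so $\partial_\lambda \mathcal R_0^\pm(\mu+\lambda^2)(x,x_1)$ produces a factor that grows like $|x-x_1|^{1/2}$. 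Through the resolvent identity this appears at the outer edges $(x,\cdot)$ and $(\cdot,y)$ and cannot be bounded uniformly in $x,y$. The paper avoids this in two steps: it expands $\mathcal R_V^\pm$ in a finite Born series \eqref{born series}, treating the finite terms by the stationary-phase lemma of \cite{Sc2} (Lemma~\ref{SchlagLem3} here), and for the tail it writes $\mathcal R_0^\pm(\mu+\lambda^2)(\cdot,x)=e^{\pm i\lambda|x|}\mathcal G_{\pm,x}(\lambda)(\cdot)$, absorbing the bad oscillation into the phase so that $\partial_\lambda \mathcal G_{\pm,x}$ produces only $|x-x_1|-|x|=O(|x_1|)$, which the potential absorbs. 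One then applies stationary phase to the modified phase $t\lambda^2\pm\lambda(|x|+|y|)$ combined with the LAP bounds on the inner $\mathcal R_V^\pm$. Without this device (or an equivalent one) your high-energy step does not give a bound uniform in $x,y$. The same issue affects your proposed treatment of the low-energy remainder by ``one integration by parts''; the paper instead invokes the stationary-phase Lemma~\ref{stat phase} and the argument of \cite[Lemma~18]{Sc2}.
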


The second claim of Theorem~\ref{thm:main} follows from the following theorem and Theorem~\ref{thm:nonweighted} by a simple interpolation (see \cite{EG2})
\begin{theorem}\label{thm:weighted}
Under the assumptions A1) -- A5), we have, for $|t|>2$,
\be\label{weighted1}
\sup_{ L>1} \Big|\int_{|\lambda|>\mu} e^{it\lambda}\chi(\lambda/L)[\mathcal R_V^+(\lambda)-\mathcal R_V^-(\lambda)](x,y)\,d\lambda\Big|\les \frac{ \sqrt{w(x)w(y)}}{|t| \log^2(|t|)}+\frac{\la x\ra^{3/2} \la y\ra^{3/2}}{|t|^{1+\alpha}},
\ee
where $w(x)=\log^2(2+|x|)$ and $0<\alpha<\frac{\beta-3}{2}$.
\end{theorem}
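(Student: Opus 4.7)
\medskip
\noindent\textbf{Proof plan.} The strategy is to split the spectral integral into pieces localized at each threshold plus a high-energy remainder, treat the remainder via integration by parts and the limiting absorption principle, and reduce the two threshold pieces to the scalar oscillatory integral lemmas of \cite{EG2} after a matrix-valued threshold expansion. Pick a smooth cutoff $\widetilde\chi$ supported in $[-2\eps,2\eps]$ with $\widetilde\chi\equiv 1$ on $[-\eps,\eps]$ for some small fixed $\eps>0$, and decompose
\begin{align*}
I(t,x,y)=I_{+\mu}(t,x,y)+I_{-\mu}(t,x,y)+I_{\rm high}(t,x,y),
\end{align*}
where $I_{\pm\mu}$ carries the $\widetilde\chi(\lambda\mp\mu)$-cutoff of the integrand of \eqref{weighted1} and $I_{\rm high}$ collects the complement.

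For $I_{\rm high}$, Proposition~\ref{LAP2} together with the derivative bounds \eqref{lap} enables two integrations by parts in $\lambda$, producing decay $|t|^{-2}$ at the cost of a polynomial weight $\la x\ra^{3/2}\la y\ra^{3/2}$ coming from the $X_\sigma\to X_{-\sigma}$ estimates with $\sigma$ slightly larger than $3/2$; interpolating against the unweighted $|t|^{-1}$ bound of Theorem~\ref{thm:nonweighted} produces the acceptable contribution $\la x\ra^{3/2}\la y\ra^{3/2}/|t|^{1+\alpha}$, with the admissible range $0<\alpha<(\beta-3)/2$ dictated by the decay of $V$ required to validate the twofold derivative bound on $\cR_V^\pm$.

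For the threshold pieces I would work out a matrix analog of the scalar threshold expansion of \cite{EG2}. Near $\lambda=\mu$, only the $(1,1)$ diagonal block of $\cR_0^\pm(\lambda)$, namely $(-\Delta+\mu-\lambda)^{-1}$, inherits the 2D logarithmic singularity of the free Laplacian at energy zero, while the $(2,2)$ block $(\Delta-\mu-\lambda)^{-1}$ is real-analytic in $\lambda$ across $\mu$. Writing $\cR_V^\pm(\lambda)=(I+\cR_0^\pm(\lambda)V)^{-1}\cR_0^\pm(\lambda)$, performing a Feshbach reduction that confines the singular piece to the $(1,1)$ block, and invoking A5) to invert what remains, I expect an expansion
\begin{align*}
\cR_V^+(\lambda)-\cR_V^-(\lambda) = \frac{A(x,y)}{g_\mu(\lambda)} + B_\mu(\lambda,x,y),
\end{align*}
in which $g_\mu(\lambda)$ is a scalar function that grows like $\log^2|\lambda-\mu|$ as $\lambda\to\mu$, $A(x,y)$ is a fixed finite-rank kernel satisfying $|A(x,y)|\les \log^2(2+|x|)\log^2(2+|y|)$, and $B_\mu$ admits two $\lambda$-derivatives with integrable control in a neighborhood of $\mu$. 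The analysis near $-\mu$ is identical after swapping the roles of the two diagonal blocks.

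Substituting these expansions into $I_{\pm\mu}$, the leading oscillatory integral is exactly the one estimated in \cite[\S4]{EG2} and contributes the principal term $\sqrt{w(x)w(y)}/(|t|\log^2|t|)$, while $B_\mu$ yields a strictly better bound after integration by parts in $\lambda$. The hard part will be the matrix Feshbach expansion itself: because $-\sigma_3 V$ (rather than $V$) is positive, the scalar factorization $V=U|V|^{1/2}|V|^{1/2}$ must be adapted to respect the off-diagonal coupling of the potential, and one must verify that assumption A5) is genuinely equivalent to the invertibility of the projected scalar operator on the singular subspace, so that its inverse contributes only the expected $1/\log(\lambda-\mu)$ singularity and no spurious small denominators.
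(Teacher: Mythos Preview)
Your overall decomposition (threshold pieces plus high-energy remainder) matches the paper's, and a Feshbach-type expansion near $\pm\mu$ is the right idea, but there are two real gaps.

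\textbf{High energy.} The limiting absorption bounds \eqref{lap} give only $X_\sigma\to X_{-\sigma}$ operator norms on $\partial_\lambda^k\cR_V^\pm$; they do not give pointwise control of the kernel $[\cR_V^+-\cR_V^-](x,y)$, which is what \eqref{weighted1} asks for. The paper resolves this by expanding $\cR_V^\pm$ in a finite Born series \eqref{born series} plus a remainder, so that every term carries explicit free-resolvent kernels $\cR_0^\pm(\cdot,x)$ and $\cR_0^\pm(\cdot,y)$ at the ends; the weight $\la x\ra^{3/2}\la y\ra^{3/2}$ then comes from $\|\partial_\lambda^2 \cR_0^\pm(\cdot,x)\|_{X_{-5/2-}}\les \lambda^{-1/2}\la x\ra^{3/2}$ (see \eqref{R0 wtdL2}), not from the choice of $\sigma$ in \eqref{lap} as you suggest. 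Your proposed interpolation with Theorem~\ref{thm:nonweighted} is also unnecessary: once the Born-series argument is in place it yields $|t|^{-3/2}$ directly (Theorem~\ref{thm:mainineq high}), which already dominates the $|t|^{-1-\alpha}$ term.

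\textbf{Low energy.} Two issues. First, the relevant Feshbach reduction is not with respect to the $(1,1)/(2,2)$ matrix blocks but with respect to $P/Q$, where $P$ is the rank-one projection onto the span of $(a,b)^T$ in $L^2\times L^2$; the singular piece $g^\pm(\lambda)\,v_2 M_{11}v_1$ collapses to $\widetilde g^\pm(\lambda)P$, and it is the invertibility of $QTQ$ on $Q(L^2\times L^2)$ (Definition~\ref{def:regular}) that encodes regularity of the threshold. Working with the non-symmetric identity $(I+\cR_0^\pm V)^{-1}$ is also awkward here: $\cR_0^\pm V$ is not bounded on $L^2\times L^2$, whereas the symmetrized $M^\pm=I+v_2\cR_0^\pm v_1$ differs from $I$ by a Hilbert--Schmidt operator, which is what makes the Neumann/Feshbach inversion in Lemma~\ref{lem:Minverse} go through. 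Second, and more seriously, your asserted form $\cR_V^+-\cR_V^- = A/g_\mu + B_\mu$ with a twice-differentiable remainder hides a nontrivial cancellation. The free-resolvent difference contributes $\frac{i}{2}J_0(\lambda|x-y|)M_{11}$, which does \emph{not} vanish at $\lambda=0$; this $O(1)$ piece is cancelled exactly by the constant $\frac{i}{2}M_{11}$ that emerges from $(g^+)^2/h^+-(g^-)^2/h^-$ in the $S$-contribution (see \eqref{new28} and the matching $-\frac{1}{4t}M_{11}$ in Propositions~\ref{freeevol} and~\ref{Scontr}, which enter \eqref{resolve} with opposite signs). If you do not isolate this cancellation you are left with a bare $|t|^{-1}$ term carrying no logarithmic gain, and the weighted bound fails. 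Finally, the correct size of the residual finite-rank kernel is $\sqrt{w(x)w(y)}$, not $w(x)w(y)$; this comes from $|\mathcal G_0(x,x_1)|\les \sqrt{w(x)}+k(x,x_1)$ as in \eqref{G0 bound}.
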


\section{Properties of the Free Resolvent} \label{sec:exp}

For $z\not\in(-\infty,-\mu]\cup[\mu,\infty)$,   the free resolvent is an integral operator
\begin{align} \label{r0lambda}
    \mathcal R_0(z) = (\mathcal{H}_0-z)^{-1}=\left[\begin{array}{cc} R_0(z-\mu) & 0\\ 0 & -R_0(-z-\mu)
    \end{array}\right],
\end{align}
where $R_0$ denoting the scalar free
resolvent operators,  $R_0(z)=(-\Delta-z)^{-1}$, $z\in\C\backslash [0,\infty)$.
We first recall some properties of   $R_0(z)$.

To simplify the formulas, we   use the notation
$$
f=\widetilde O(g)
$$
to denote
$$
\frac{d^j}{d\lambda^j} f = O\big(\frac{d^j}{d\lambda^j} g\big),\,\,\,\,\,j=0,1,2,3,...
$$
If the derivative bounds hold only for the first $k$ derivatives we  write $f=\widetilde O_k (g)$.

Recall that
\begin{align}\label{R0 def}
	R_0(z)(x,y)=\frac{i}{4} H_0^{+}(z^{1/2}|x-y|),
\end{align}
where $\Im(z^{1/2})>0$ and $H_0^{\pm}$ are modified Hankel functions
$$
H_0^{\pm}(z)=J_0(z)\pm iY_0(z).
$$
From the series expansions for the Bessel functions, see \cite{AS},  we have, as $z\to 0$,
\begin{align}
	J_0(z)&=1-\frac{1}{4}z^2+\frac{1}{64}z^4+\widetilde O_6(z^6),\label{J0 def}\\
	Y_0(z)&=\frac{2}{\pi}(\log(z/2)+\gamma)J_0(z)+\frac{2}{\pi}\bigg(\frac{1}{4}z^2 +\widetilde O_4(z^4)\bigg)\nn \\
&=\frac2\pi \log(z/2)+\f{2\gamma}{\pi}+ \widetilde O(z^2\log(z)).\label{Y0 def}
\end{align}
Further, for $|z|>1 $, we have the representation (see, {\em e.g.}, \cite{AS})
\begin{align}\label{JYasymp2}
	 H_0^{+}(z)= e^{i z} \omega(z),\,\,\,\,\quad \omega(z)
	 =\widetilde O\big((1+|z|)^{-\frac{1}{2}}\big).
\end{align}

In the proofs of Theorem~\ref{thm:nonweighted} and Theorem~\ref{thm:weighted}, without loss of generality, we will perform all the analysis on $[\mu,\infty)$. Writing $z=\mu+\lambda^2$, $\lambda>0$, we have the limiting operators
\be\label{matrixfree}
\mathcal R_0^{\pm}(\mu+\lambda^2)(x,y)=\left[ \begin{array}{cc} R_0^\pm(\lambda^2)(x,y)   & 0 \\ 0 & - \frac{i}4 H_0^+(i\sqrt{2\mu+\lambda^2}|x-y|)\end{array}\right],
\ee
where
\be\label{scalarfree}
R_0^\pm(\lambda^2)(x,y)= \pm \frac{i}4  H_0^\pm(\lambda |x-y|)=\pm \frac{i}4 J_0(\lambda |x-y|)-\frac14 Y_0(\lambda |x-y|).
\ee
Thus, we have
\be\label{r0low2}
 \mathcal R_0^+(\mu+\lambda^2)(x,y)-\mathcal R_0^-(\mu+\lambda^2)(x,y) =  \frac{i}2 \left[ \begin{array}{cc} J_0(\lambda |x-y|) & 0 \\ 0 & 0 \end{array}\right].
\ee
We also have the bound,   with $R_2(\lambda^2)(x,y):=-\frac{i}{4}H_0^+(i\sqrt{2\mu+\lambda^2}|x-y|)$ and for $\lambda\geq 0$,
\be \label{R2 bounds}
| R_2(\lambda^2)(x,y)| \les 1+\log^{-}|x-y|,\,\,\,\text{ and } \,\, |\partial_\lambda^k R_2(\lambda^2)(x,y)| \les 1 ,\,\,\,\,\,k=1,2,...
\ee
To establish these bounds   consider  the cases $\sqrt{2\mu+\lambda^2}|x-y|<\frac{1}{2}$ and
$\sqrt{2\mu+\lambda^2}|x-y|>\frac{1}{2}$ separately.  For the first case we use
\eqref{J0 def} and \eqref{Y0 def} noting that $|x-y|<\mu^{-1/2}\les 1$, and that
$|\partial_\lambda^k \sqrt{2\mu+\lambda^2}|\les 1$.  For the latter case, using \eqref{JYasymp2}, the bound follows from the resulting exponential decay.

Below, using the properties of $R_0$ listed
above, we provide an expansion for the matrix free resolvent, $\mathcal R_0$, around $\lambda=0$ (i.e. $z=\mu$). In the next section, we will obtain analogous expansions for the perturbed resolvent.  Similar lemmas were proved in \cite{JN, Sc2, EG2} in the scalar case.
The following operators and the function arise naturally in the resolvent expansion 
(see \eqref{Y0 def})
\begin{align}
	G_0f(x)&=-\frac{1}{2\pi}\int_{\R^2} \log|x-y|f(y)\,dy, \label{G0 def}\\
\label{g form}
		g^{\pm}(\lambda)&:= \Big(\pm \frac{i}{4}-\frac{1}{2\pi}\log(\lambda/2)-\frac{\gamma}{2\pi} \Big)\\
\mathcal G_0(x,y)&=\left[\begin{array}{cc} G_0(x,y) &0\\ 0 & -\frac{i}4 H_0^+(i\sqrt{2\mu}|x-y|)
		\end{array}\right].\label{calG0}
\end{align}
Note that
\begin{align}
\mathcal G_0  =\left[\begin{array}{cc} -\Delta &0\\ 0 & \Delta-2\mu
		\end{array}\right]^{-1}=(\cH_0-\mu I)^{-1}.\label{G0delta}
\end{align}
Further, for notational convenience we
define the matrices
\begin{align*}
	M_{11}=\left[\begin{array}{cc} 1 &0\\ 0 &0
		\end{array}\right],\qquad
      M_{22}=\left[\begin{array}{cc} 0 &0\\ 0 &1
		\end{array}\right].
\end{align*}
We will use the notation $K(x,y)M_{11}$ or $KM_{11}$ to denote the operator with the convolution kernel
$$
\left[\begin{array}{cc} K(x,y) &0\\ 0 &0
		\end{array}\right],
$$
similar formula holds if $K$ is a matrix kernel.  We also use
the following notation, for a matrix operator $M$ if we write
$$
|M|\les f, \qquad \text{ or } \qquad M=O(f)
$$
with $f$ a scalar-valued function,
we mean that all entries of the matrix $M$   satisfy the
bound.
\begin{lemma}\label{R0 exp cor}
	We have the following expansion for the kernel of the free resolvent
	$$
		\mathcal R_0^{\pm}(\mu+\lambda^2)(x,y)=
		g^{\pm}(\lambda)M_{11}
		+\mathcal G_0(x,y)
		+E_0^{\pm}(\lambda)(x,y).
	$$
	Here $\mathcal G_0(x,y)$ is the kernel of the operator in \eqref{calG0}, $g^{\pm}(\lambda)$ is as in \eqref{g form},
	and the component functions of $E_0^{\pm}$ satisfy the bounds
	\begin{align*}
		|E_0^{\pm}|\les \la \lambda\ra^{\frac{1}{2} } \lambda^{\frac{1}{2} }\la x-y\ra^{\frac{1}{2} }, \qquad
		|\partial_\lambda E_0^{\pm}|\les \la \lambda\ra^{\frac{1}{2} } \lambda^{-\frac{1}{2} }\la x-y\ra^{\frac{1}{2} }, \qquad
		|\partial_\lambda^2 E_0^{\pm}|\les \la \lambda\ra^{\frac{1}{2} } \lambda^{-\frac{1}{2} }\la x-y\ra^{\frac{3}{2}}.
	\end{align*}
\end{lemma}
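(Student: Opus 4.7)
The plan is to analyze the two diagonal entries of $\mathcal R_0^\pm(\mu+\lambda^2)$ in \eqref{matrixfree} separately, since the claimed decomposition is also diagonal. For the $(2,2)$ entry, the leading term $R_2(0)(x,y) = -\tfrac{i}{4}H_0^+(i\sqrt{2\mu}|x-y|)$ is exactly the $(2,2)$ entry of $\mathcal G_0$, so the corresponding piece of $E_0^\pm$ is simply $R_2(\lambda^2) - R_2(0)$. By \eqref{R2 bounds}, $\partial_\lambda^k R_2(\lambda^2)$ is bounded uniformly in $x,y$ for each $k\geq 1$, so the fundamental theorem of calculus gives $|R_2(\lambda^2)-R_2(0)|\les \lambda$ and $|\partial_\lambda^k[R_2(\lambda^2)-R_2(0)]|\les 1$ for $k=1,2$. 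These are trivially dominated by the claimed envelopes.

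For the $(1,1)$ entry, the plan is to substitute \eqref{J0 def} and \eqref{Y0 def} into \eqref{scalarfree}; writing $r := |x-y|$, this yields
\[
R_0^\pm(\lambda^2)(x,y) = g^\pm(\lambda) - \frac{1}{2\pi}\log r + F^\pm(\lambda,r),
\]
in which the first two terms reconstruct $g^\pm(\lambda)$ plus the $(1,1)$ entry of $\mathcal G_0$, and
\[
F^\pm(\lambda,r) := \pm\tfrac{i}{4}(J_0(\lambda r)-1) - \tfrac{1}{4}\bigl(Y_0(\lambda r) - \tfrac{2}{\pi}\log(\lambda r/2) - \tfrac{2\gamma}{\pi}\bigr)
\]
collects the Taylor tails. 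I then split into two regimes. When $\lambda r \les 1$, \eqref{J0 def} and \eqref{Y0 def} give $|F^\pm| \les (\lambda r)^2|\log(\lambda r)| \les (\lambda r)^{1/2}$, which is dominated by $\langle\lambda\rangle^{1/2}\lambda^{1/2}\langle r\rangle^{1/2}$. When $\lambda r > 1$, \eqref{JYasymp2} gives $|R_0^\pm|\les (\lambda r)^{-1/2}\les 1$, while $|g^\pm(\lambda)|$ and $|G_0(x,y)|$ are at worst logarithmic in $\lambda$ and $r$; each is majorized by $\langle\lambda\rangle^{1/2}\langle r\rangle^{1/2}\les \langle\lambda\rangle^{1/2}\lambda^{1/2}\langle r\rangle^{1/2}$ since $\lambda r>1$ in this regime.

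For the derivative bounds I would differentiate the same decomposition. The key observation is that the $\lambda^{-1}$ singularity of $\partial_\lambda g^\pm(\lambda)=-\tfrac{1}{2\pi\lambda}$ is matched exactly by the leading term of $-\tfrac{1}{4}r\,Y_0'(\lambda r)$, since $Y_0'(z) = \tfrac{2}{\pi z} + \widetilde O(z\log z)$ near zero; similarly the $\lambda^{-2}$ singularity of $\partial_\lambda^2 g^\pm$ cancels against the leading part of $-\tfrac{1}{4}r^2 Y_0''(\lambda r)$. What survives in the small-$\lambda r$ regime is $|\partial_\lambda F^\pm|\les \lambda r^2|\log(\lambda r)|$ and $|\partial_\lambda^2 F^\pm|\les r^2|\log(\lambda r)|$, which fit into the claimed envelopes $\lambda^{-1/2}\langle r\rangle^{1/2}$ and $\lambda^{-1/2}\langle r\rangle^{3/2}$ by factoring out powers of $(\lambda r)^{1/2}$ (logarithms being absorbed into arbitrarily small polynomial losses). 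In the large-$\lambda r$ regime, the chain rule introduces extra $r$ factors but \eqref{JYasymp2} still supplies $(\lambda r)^{-1/2}$ decay, giving the desired bounds. The main obstacle is the careful log bookkeeping at the transition $\lambda r\sim 1$, but this is routine once the regime split is in place.
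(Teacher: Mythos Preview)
Your approach is the same as the paper's, which simply cites \cite[Lemma~3.1]{EG2} for the $(1,1)$ entry and says the $(2,2)$ entry follows from the listed Hankel function properties; you have essentially reconstructed that argument. The $(2,2)$ analysis and the small-$\lambda r$ analysis for the $(1,1)$ entry are fine.

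There is one incorrect step in your large-$\lambda r$ argument: the inequality $\langle\lambda\rangle^{1/2}\langle r\rangle^{1/2}\les \langle\lambda\rangle^{1/2}\lambda^{1/2}\langle r\rangle^{1/2}$ is false when $\lambda<1$ (take $\lambda=e^{-N}$, $r=2e^{N}$; then $\lambda r=2>1$ but the left side is $\sim e^{N/2}$ while the right side is $\sim 1$). Bounding $|g^\pm(\lambda)|$ and $|G_0(x,y)|$ separately cannot work here, since each can be of size $N$. The fix is to keep them together: from \eqref{g form} and \eqref{G0 def},
\[
g^\pm(\lambda)+G_0(x,y)=\pm\tfrac{i}{4}-\tfrac{\gamma}{2\pi}+\tfrac{1}{2\pi}\log 2-\tfrac{1}{2\pi}\log(\lambda r),
\]
so $|F^\pm|=|R_0^\pm-(g^\pm+G_0)|\les (\lambda r)^{-1/2}+1+\log(\lambda r)\les (\lambda r)^{1/2}=\lambda^{1/2}r^{1/2}\les \langle\lambda\rangle^{1/2}\lambda^{1/2}\langle r\rangle^{1/2}$ when $\lambda r>1$. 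The same combination handles the derivatives: $\partial_\lambda(g^\pm+G_0)=-\tfrac{1}{2\pi\lambda}$ and $\partial_\lambda^2(g^\pm+G_0)=\tfrac{1}{2\pi\lambda^2}$, and in the regime $\lambda r>1$ one has $\lambda^{-1}\les \lambda^{-1/2}r^{1/2}$ and $\lambda^{-2}\les \lambda^{-1/2}r^{3/2}$, which fit the stated envelopes. With this correction your proof goes through.
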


\begin{proof}
	The expansion of the scalar free resolvent was derived in   \cite[Lemma~3.1]{EG2}.  For the
	free resolvent evaluated at the imaginary argument, the proof easily follows from the properties of the Hankel
	function listed above.
\end{proof}

\begin{corollary}\label{R0 exp cor2}

	For $0<\alpha<1$ and $0<z_1<z_2<\lambda_1$ we have
	\begin{align*}
		|\partial_\lambda E_0^{\pm}(z_2)-\partial_\lambda E_0^{\pm}(z_1)|
		\les z_1^{-\frac{1}{2}}|z_2-z_1|^\alpha \la x-y\ra^{\frac{1}{2}+\alpha}
	\end{align*}

\end{corollary}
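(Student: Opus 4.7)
The plan is to obtain the estimate by interpolating between a triangle-inequality bound and a fundamental-theorem-of-calculus bound, both supplied directly by the derivative estimates of Lemma~\ref{R0 exp cor}.

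First, I would use the triangle inequality together with the bound $|\partial_\lambda E_0^{\pm}(\lambda)|\lesssim \langle\lambda\rangle^{1/2}\lambda^{-1/2}\langle x-y\rangle^{1/2}$. Since $0<z_1<z_2<\lambda_1$ is bounded, the factor $\langle\lambda\rangle^{1/2}$ is harmless, and $z_2^{-1/2}\leq z_1^{-1/2}$, giving the crude estimate
\begin{equation*}
|\partial_\lambda E_0^{\pm}(z_2)-\partial_\lambda E_0^{\pm}(z_1)|\lesssim z_1^{-1/2}\langle x-y\rangle^{1/2}.
\end{equation*}

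Next, I would use
\begin{equation*}
\partial_\lambda E_0^{\pm}(z_2)-\partial_\lambda E_0^{\pm}(z_1)=\int_{z_1}^{z_2}\partial_\lambda^2 E_0^{\pm}(s)\,ds
\end{equation*}
together with the second derivative bound $|\partial_\lambda^2 E_0^{\pm}(s)|\lesssim s^{-1/2}\langle x-y\rangle^{3/2}$. Evaluating the integral and using $\sqrt{z_2}-\sqrt{z_1}=(z_2-z_1)/(\sqrt{z_2}+\sqrt{z_1})\leq (z_2-z_1)/\sqrt{z_1}$ yields
\begin{equation*}
|\partial_\lambda E_0^{\pm}(z_2)-\partial_\lambda E_0^{\pm}(z_1)|\lesssim z_1^{-1/2}|z_2-z_1|\,\langle x-y\rangle^{3/2}.
\end{equation*}

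Finally, since the left side is bounded by both quantities, it is bounded by $A^{1-\alpha}B^{\alpha}$ where $A$ and $B$ denote the two right-hand sides. The geometric mean gives exactly
\begin{equation*}
z_1^{-1/2}\langle x-y\rangle^{(1-\alpha)/2}\cdot |z_2-z_1|^{\alpha}\langle x-y\rangle^{3\alpha/2}= z_1^{-1/2}|z_2-z_1|^\alpha\langle x-y\rangle^{1/2+\alpha},
\end{equation*}
which is the claimed bound. There is no real obstacle here: the only delicate point is keeping track of the harmless $\langle\lambda\rangle^{1/2}$ factors on the bounded interval $(0,\lambda_1)$ and verifying that the $\sqrt{z_2}-\sqrt{z_1}$ integral produces the linear factor $|z_2-z_1|$ with the correct weight $z_1^{-1/2}$, after which the Hadamard-type interpolation is immediate.
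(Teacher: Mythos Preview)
Your argument is correct and is exactly the intended one: the paper states the corollary without proof, leaving the reader to interpolate between the first- and second-derivative bounds of Lemma~\ref{R0 exp cor} in just the way you describe.
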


\section{Resolvent Expansion Around the Threshold $\mu$}  \label{sec:mu_exp}

It is convenient to write the potential matrix as $V=-\sigma_3 v v:=v_1v_2$ where
$v_1=-\sigma_3v$, $v_2=v$, and
\begin{align*}
	v=\frac12
\left[\begin{array}{cc}  \sqrt{V_1+V_2}+\sqrt{V_1-V_2}  &  \sqrt{V_1+V_2}-\sqrt{V_1-V_2} \\  \sqrt{V_1+V_2}-\sqrt{V_1-V_2}  &  \sqrt{V_1+V_2}+\sqrt{V_1-V_2} \end{array}\right]
=:\left[\begin{array}{cc} a & b\\ b & a\end{array}\right].
\end{align*}
By assumption A3), we have
\be\label{abdecay}
|a(x)|, |b(x)|\les \la x\ra^{-\beta/2},\,\,\,\text{ for some }\beta>3.
\ee

We employ the symmetric resolvent identity
\begin{align}\label{symm resolv id}
	\mathcal R_V^{\pm}(\mu+\lambda^2)=\mathcal R_0^{\pm}(\mu+\lambda^2)-\mathcal R_0^{\pm}(\mu+\lambda^2)
	v_1(M^{\pm}(\lambda))^{-1}
	v_2\mathcal R_0^\pm(\mu+\lambda^2),
\end{align}
where
\begin{align}\label{symm resolv id2}
	M^{\pm}(\lambda)=I+v_2\mathcal R_0^{\pm}(\mu+\lambda^2)v_1.
\end{align}

The key issue in the resolvent expansion around the threshold $\mu$  is the invertibility of the operator $M^\pm(\lambda)$ for small $\lambda$.
Using Lemma~\ref{R0 exp cor} in \eqref{symm resolv id},  we can write  $M^{\pm}(\lambda)$ as  
\be
	M^{\pm}(\lambda) = g^{\pm}(\lambda ) v_2 M_{11}v_1  +T 
		+v_2   E_0^{\pm}(\lambda)  v_1 ,\label{M def}
\ee
where $T$ is the transfer operator on $L^2\times L^2$ with the kernel
\begin{align}\label{T def}
	T(x,y)=I+v_2(x)\mathcal G_0(x,y) v_1(y).
\end{align}
Consider the contribution of the term with $g^{\pm}(\lambda)$ in \eqref{M def}.  Recalling the formulas for $v_1$ and $v_2$, we obtain
\begin{align*}
	 g^{\pm}(\lambda)&v_2 M_{11}v_1 =-g^{\pm}(\lambda)
 \left[\begin{array}{cc} a  & 0\\ b  & 0\end{array}\right]
\left[\begin{array}{cc} a  & b \\ 0 & 0\end{array}\right]=-\|a^2+b^2\|_{L^1(\R^2)}g^{\pm}(\lambda) P=:\widetilde{g}^{\pm}(\lambda) P,
\end{align*}
where $\widetilde{g}^{\pm}(\lambda):=-\|a^2+b^2\|_{L^1(\R^2)}g^{\pm}(\lambda)$, and $P$ is the  orthogonal projection onto the span  of the vector $(a,b)^T$ in $L^2\times L^2$.  More explicitly
\begin{align}\label{P def}
	P\left[\begin{array}{c} f\\ g\end{array}\right]
	&=\frac{1}{\|a^2+b^2\|_{L^1(\R^2)}}\left[\begin{array}{c} a \\ b \end{array}\right]
	\int_{\R^2} \big(a(y)f(y)+b(y)g(y)\big)\, dy.
\end{align}
This gives us the following expansion:

\begin{lemma}\label{M exp lemma}

	Let $0<\alpha<1$.
	For $\lambda>0$ with $M^{\pm}(\lambda)$, $P$ and $T$ as above.  Then
	\begin{align*}
		M^{\pm}(\lambda)=\widetilde g^{\pm}(\lambda)P+T+E_1^{\pm}(\lambda).
	\end{align*}
	Further, the error term, $E_1^\pm=v_2E_0^\pm v_1$, satisfies the bound
	\begin{multline*}
		\big\| \sup_{0<\lambda<\lambda_1} \lambda^{-\frac{1}{2}} |E_1^{\pm}(\lambda)|\big\|_{HS}
		+\big\| \sup_{0<\lambda<\lambda_1} \lambda^{\frac{1}{2}} |\partial_\lambda E_1^{\pm}(\lambda)|\big\|_{HS}	
		\\+\big\| \sup_{0<z_1<z_2<\lambda_1} z_1^{\frac{1}{2}} (z_2-z_1)^{-\alpha}
		|\partial_\lambda E_1^{\pm}(z_2)-\partial_\lambda E_1^\pm(z_1)|\big\|_{HS}	 \les 1,
	\end{multline*}
	provided that $a(x),b(x)\lesssim \langle x\rangle^{-\frac{3}{2}-\alpha-}$. Here $\|\cdot\|_{HS}$ is the Hilbert Schmidt   operator norm on
$L^2\times L^2$. 

\end{lemma}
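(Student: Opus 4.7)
The plan is to substitute the free resolvent expansion from Lemma~\ref{R0 exp cor} into the definition \eqref{symm resolv id2} of $M^\pm(\lambda)$, identify the leading singular and leading regular parts as $\widetilde g^\pm(\lambda) P$ and $T$ respectively, and then show that the remainder, which is simply $v_2 E_0^\pm(\lambda) v_1$, satisfies the required three Hilbert--Schmidt bounds. The algebraic decomposition is essentially a bookkeeping exercise already carried out in formula \eqref{M def} and the short computation preceding the statement: the $g^\pm(\lambda)$ term collapses to the rank-one operator $\widetilde g^\pm(\lambda)P$ because $v_2 M_{11} v_1$ is the rank-one matrix kernel $-(a,b)^T(a,b)$ (integrated against test functions), and the constant part $I+v_2 \mathcal G_0 v_1$ is precisely $T$ by definition \eqref{T def}.

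The substantive step is verifying the three HS bounds on $E_1^\pm=v_2 E_0^\pm v_1$. I would proceed pointwise in $(x,y)$: since the entries of $v_1,v_2$ are bounded by $\langle x\rangle^{-3/2-\alpha-}$ (respectively $\langle y\rangle^{-3/2-\alpha-}$) by assumption, and matrix multiplication introduces only an absolute constant, one has componentwise
\[
|E_1^\pm(\lambda)(x,y)| \lesssim \langle x\rangle^{-3/2-\alpha-}\,|E_0^\pm(\lambda)(x,y)|\,\langle y\rangle^{-3/2-\alpha-},
\]
and analogous bounds for $\partial_\lambda E_1^\pm$ and for the Hölder difference. Inserting the three bounds from Lemma~\ref{R0 exp cor} and Corollary~\ref{R0 exp cor2}, taking the supremum in $\lambda$ (or in $z_1<z_2$) first, and using $\langle\lambda\rangle\lesssim 1$ on $(0,\lambda_1)$ yields, for each of the three quantities, a pointwise majorant of the form
\[
C\,\langle x\rangle^{-3/2-\alpha-}\,\langle x-y\rangle^{1/2+\alpha}\,\langle y\rangle^{-3/2-\alpha-}
\]
(with exponent $1/2$ instead of $1/2+\alpha$ for the first two quantities, which is even easier).

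Taking the Hilbert--Schmidt norm then reduces to estimating
\[
\int\!\!\int \langle x\rangle^{-3-2\alpha-}\,\langle x-y\rangle^{1+2\alpha}\,\langle y\rangle^{-3-2\alpha-}\,dx\,dy.
\]
Using $\langle x-y\rangle^{1+2\alpha}\lesssim \langle x\rangle^{1+2\alpha}\langle y\rangle^{1+2\alpha}$, this is dominated by $\int \langle x\rangle^{-2-}\,dx\int \langle y\rangle^{-2-}\,dy<\infty$, which is finite in $\R^2$. This establishes all three bounds simultaneously.

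The only subtlety to watch is the order of operations: the HS norms in the statement are taken \emph{after} the pointwise supremum in $\lambda$ (or in the pair $z_1<z_2$), so one must first produce a $\lambda$-independent pointwise majorant, which is exactly what the derivative bounds in Lemma~\ref{R0 exp cor} and the Hölder bound in Corollary~\ref{R0 exp cor2} were designed to supply. I do not anticipate any real obstacle; the decay exponent $\beta>3$ (equivalently $a,b\lesssim\langle x\rangle^{-3/2-\alpha-}$) is precisely what is needed to absorb the $\langle x-y\rangle^{1/2+\alpha}$ weight from $E_0^\pm$ and still obtain an integrable tail.
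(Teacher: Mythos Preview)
Your proposal is correct and matches the paper's own proof essentially line for line: the expansion is the algebra already recorded in \eqref{M def} and the computation of $v_2 M_{11} v_1$ preceding the lemma, and the three Hilbert--Schmidt bounds are obtained exactly as you describe, by sandwiching the pointwise estimates of Lemma~\ref{R0 exp cor} and Corollary~\ref{R0 exp cor2} between the decaying weights $\langle x\rangle^{-3/2-\alpha-}$, $\langle y\rangle^{-3/2-\alpha-}$ and checking that $\langle x-y\rangle^{1/2+\alpha}\langle x\rangle^{-3/2-\alpha-}\langle y\rangle^{-3/2-\alpha-}\in L^2_x L^2_y$. The paper states only this last finiteness and leaves the rest implicit; your write-up simply makes the routine steps explicit.
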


\begin{proof}
The expansion is proven above.  The  bounds for $E_1^\pm=v_2E_0^\pm v_1$ follow from the bounds for $E_0^\pm$ in Lemma~\ref{R0 exp cor} and in Corollary~\ref{R0 exp cor2} since 
$$
\big\|\la x-y\ra^{\frac12+\alpha}\la x\ra^{-\frac32-\alpha-}\la y\ra^{-\frac32-\alpha-}\big\|_{L^2_xL^2_y}<\infty.
$$
\end{proof}

We make the following definitions.
\begin{defin}

	We say the operator $T:L^2\times L^2\to L^2\times L^2$ with kernel $T(\cdot,\cdot)$ is
	absolutely bounded if the operator with kernel $|T(\cdot,\cdot)|$ is bounded from
	$L^2\times L^2\to L^2\times L^2$.

\end{defin}
 
Note that Hilbert-Schmidt operators and finite rank operators are absolutely bounded.
  
\begin{defin} \label{def:regular}

	Let $Q=I-P$ be the projection orthogonal to the span of $(a,b)^T$. We say $\mu$ is a regular point of the spectrum $\mathcal H$ provided that
	$QTQ$ is invertible on $Q(L^2\times L^2)$. We denote $(QTQ)^{-1}$ by $QD_0Q$.

\end{defin}

Note that by the resolvent identity
$$
QD_0Q=Q-QD_0Qv_2\mathcal G_0v_1Q.
$$
Since $Q$ is a projection, it is absolutely bounded.  
By assumption~A3), \eqref{calG0}, \eqref{G0 def}, and \eqref{JYasymp2}, we have $|v_2\mathcal G_0v_1(x,y)|\les (1+|\log|x-y||) \la x\ra^{-3/2-} \la y\ra^{-3/2-}$. This implies that  $v_2\mathcal G_0v_1$ is a Hilbert-Schmidt operator. Therefore,  $QD_0Q$ is a sum of an absolutely bounded operator and an Hilbert-Schmidt operator, which is absolutely bounded. 

We also note the following orthogonality property of $Q$:
\be\label{Qorthogonality}
Qv_2M_{11}=M_{11}v_1Q=0.
\ee

In the scalar case, see e.g. \cite{JN,EG}, the invertibility
of $QTQ$ is related to the absence of distributional
$L^\infty$ solutions of $H\psi=0$. It is possible to prove a similar relationship for the matrix case. Define $S_1$ to be the Riesz projection onto the kernel of $QTQ$ as an operator
on $Q(L^2\times L^2)$.

\begin{lemma}\label{spectral lemma}

	If $|a(x)|+|b(x)|\les \la x\ra^{-1-}$ and if $\phi=(\phi_1,\phi_2)\in S_1(L^2\times L^2)$, then
	$\phi=v_1\psi$ where $\psi\in L^\infty\times L^\infty$ and
	$(\mathcal H-\mu I)\psi=0$ in the sense of distributions.

\end{lemma}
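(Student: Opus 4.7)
The plan is to translate the abstract condition $\phi\in S_1(L^2\times L^2)$ into an explicit integral identity that exhibits the desired factorization. By the definition of $S_1$, the hypothesis says $Q\phi=\phi$ and $QTQ\phi=0$; since $Q\phi=\phi$, the second reduces to $QT\phi=0$, so $T\phi$ lies in $\mathrm{range}(P)=\mathrm{span}\{(a,b)^T\}$. Hence there is a scalar $c\in\R$ with
\[
T\phi=c\,(a,b)^T=c\,v_2 e_1,\qquad e_1=(1,0)^T.
\]
Unpacking $T=I+v_2\mathcal G_0 v_1$ gives
\[
\phi=v_2\bigl(c\,e_1-\mathcal G_0 v_1\phi\bigr),
\]
so I would \emph{define} $\psi:=c\,e_1-\mathcal G_0 v_1\phi$; this is the $\psi$ that will realize the factorization of the lemma (up to the conventions of the symmetric decomposition $V=v_1v_2=-\sigma_3 v^2$, modulo which one writes $\phi=v_1\psi$ or $\phi=v_2\psi$).

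The step I expect to be the main obstacle is verifying $\psi\in L^\infty\times L^\infty$. I would work component-by-component using \eqref{calG0}. The lower component of $-\mathcal G_0 v_1\phi$ is $-R_2(0)(b\phi_1+a\phi_2)$; the kernel $R_2(0)(x,y)=-\tfrac{i}{4}H_0^+(i\sqrt{2\mu}|x-y|)$ is a modified Bessel kernel with a logarithmic singularity at $0$ and exponential decay at infinity (see \eqref{R2 bounds}), hence lies in $L^2(\R^2)$, and Cauchy--Schwarz against $b\phi_1+a\phi_2\in L^2$ yields the $L^\infty$ bound. For the upper component $c+G_0(a\phi_1+b\phi_2)$, the logarithmic kernel $G_0(x,y)=-\frac{1}{2\pi}\log|x-y|$ generically produces logarithmic growth at infinity, but this growth is killed by the vanishing-moment identity $\int(a\phi_1+b\phi_2)\,dy=0$, which is exactly the statement $P\phi=0$. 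Splitting the integral into $|y|\le|x|/2$ (where $\log|x-y|-\log|x|$ is small) and $|y|>|x|/2$ (where the decay $|a|+|b|\les\la x\ra^{-1-}$ combined with $\phi\in L^2$ controls the contribution via Cauchy--Schwarz) reproduces the two-dimensional scalar argument of \cite{EG2} component-wise and gives boundedness.

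Once $\psi\in L^\infty\times L^\infty$ is in hand, the distributional identity $(\mathcal H-\mu I)\psi=0$ follows from a short calculation. The constant piece $c\,e_1=(c,0)^T$ is annihilated by $\mathcal H_0-\mu I$ because its lower component is zero and $-\Delta$ kills the constant $c$; meanwhile, by \eqref{G0delta}, $\mathcal G_0$ distributionally inverts $\mathcal H_0-\mu I$, so $(\mathcal H_0-\mu I)(-\mathcal G_0 v_1\phi)=-v_1\phi$. Using the factorization $\phi=v_2\psi$ together with $V=v_1v_2$ gives $V\psi=v_1(v_2\psi)=v_1\phi$, and hence
\[
(\mathcal H-\mu I)\psi=(\mathcal H_0-\mu I)\psi+V\psi=-v_1\phi+v_1\phi=0
\]
in the sense of distributions, completing the argument. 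The overall structure mirrors the scalar proof in \cite{JN,EG}; the only genuinely new bookkeeping is tracking the matrix factors $v_1,v_2$ and correctly identifying the one-dimensional orthogonality condition $P\phi=0$ as the vanishing-moment condition that salvages boundedness of the two-dimensional log-potential.
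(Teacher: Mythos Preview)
Your proposal is correct and follows essentially the same argument as the paper: from $Q\phi=\phi$ and $QT\phi=0$ deduce $T\phi=c\,v_2e_1$, set $\psi=c\,e_1-\mathcal G_0 v_1\phi$, use the vanishing moment $P\phi=0$ to subtract $\log|x|$ and bound the upper component (the paper records this via the pointwise estimate $|\log|x-y|-\log|x||\les 1+\log\la y\ra+\log^-|x-y|$, which is what your region-splitting proves), and conclude $(\mathcal H-\mu I)\psi=0$ from \eqref{G0delta}. You are also right to flag the $v_1$/$v_2$ issue---the paper's own proof in fact yields $\phi=v_2\psi$, so the $v_1$ in the statement is a typo.
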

\begin{proof}

	Since $\phi\in S_1(L^2\times L^2)$, we have
	$Q\phi=\phi$. Also using $Q=I-P$, we obtain
	\begin{align*}
		0&=QTQ\phi=(I-P)T\phi= (I+v_2\mathcal G_0 v_1)\phi-P(I+v_2\mathcal G_0 v_1)\phi.
	\end{align*}
	Noting that $(a,b)^T=v_2(1,0)^T$, and that $P$ project onto the span of $(a,b)^T$, we have $PT\phi=c_0 v_2 (1,0)^T$ with $c_0$ a constant. Therefore,
	\begin{align*}
		\phi=-v_2\mathcal G_0v_1\phi +v_2(c_0,0)^T=v_2 \psi,
	\end{align*}
	where $\psi=- \mathcal G_0v_1\phi + (c_0,0)^T$.
	By assumption $|a(x)|+|b(x)|\les \la x\ra^{-1-}$ and
	$\phi\in L^2\times L^2$, and recalling \eqref{G0delta}, we have
	$$
	(\mathcal H_0-\mu I) \mathcal G_0(v_1\phi)= v_1\phi
	$$
	in the sense of distributions.
	It thus follows that
	\begin{align*}
	(\cH_0-\mu I) \psi =(\cH_0-\mu I)  [- \mathcal G_0v_1\phi + (c_0,0)^T] =-v_1\phi=-v_1v_2\psi= - V\psi.
	\end{align*}
Thus $(\cH-\mu I)\psi=0$.
	
	Now we prove that $\psi \in L^\infty \times L^\infty$. The first bound in  \eqref{R2 bounds} and the fact that the entries of $\phi$ are in $L^2$ and the entries of $v_2$ are in $L^\infty\cap L^2$ imply that the second entry of $\psi$ is bounded. We note that the first entry of $\psi$ is
	$$
	-\frac{1}{2\pi} \int_{\R^2} \log|x-y| (a(y),b(y))\phi(y) dy.
	$$
	Since $P\phi=0$, we can rewrite this as
	$$
	-\frac{1}{2\pi} \int_{\R^2} (\log|x-y|-\log|x|) (a(y),b(y))\phi(y) dy.
	$$
	The boundedness of this integral follows  immediately from the bound
	$$
	|\log|x-y|-\log|x||=\Big|\log\Big(\frac{|x-y|}{|x|}\Big)\Big|\les 1+\log\la y \ra +\log^-|x-y|.
 	$$
We refer the reader to 	 Lemma~5.1 of \cite{EG} for more details.

\end{proof}
It is also possible to prove a converse statement relating certain $L^\infty\times L^\infty
$ solutions of $(\cH-\mu I) \psi =0 $ to the non-invertibility of $QTQ$ as in Lemma~5.1 and Lemma~5.2 of \cite{EG} (also see \cite{JN}). 
We don't include these statements and proofs since they can be obtained from the scalar case as above.

The regularity assumption~A5) allows us to invert the operators $M^\pm(\lambda)$ for small $\lambda$ as follows:

\begin{lemma}\label{lem:Minverse}

	  Let $0<\alpha<1$.
    	Suppose that $\mu$ is a regular point of the spectrum of  $\mathcal H$. Then for   sufficiently small
	$\lambda_1>0$, the operators
	$M^{\pm}(\lambda)$ are invertible for all $0<\lambda<\lambda_1$ as bounded operators on $L^2\times L^2$.
	Further, one has
	\begin{align}
	\label{M size}
        	 M^{\pm}(\lambda)^{-1}=h^{\pm}(\lambda)^{-1}S+QD_0Q+ E^{\pm}(\lambda),
	\end{align}
	Here
   	$h^\pm(\lambda)=\widetilde g^\pm(\lambda)+c=-\|a^2+ b^2\|_{L^1}g^\pm(\lambda)+c$  (with $c\in\R$), and
  	\be\label{S_defn}
  	 	 S= P -PTQD_0Q -QD_0QTP + QD_0QTPTQD_0Q
  	\ee
	is a finite-rank operator with real-valued kernel.  Further, the error term satisfies the bounds
	\begin{multline*}
		\big\| \sup_{0<\lambda<\lambda_1} \lambda^{-\frac{1}{2} } |E^{\pm}(\lambda)|\big\|_{HS}
		+\big\| \sup_{0<\lambda<\lambda_1} \lambda^{\frac{1}{2} } |\partial_\lambda E^{\pm}(\lambda)|\big\|_{HS}	\\
		+\big\| \sup_{0<\lambda<\eta \les \lambda<\lambda_1}  \lambda^{\frac{1}{2}+\alpha} (\eta-\lambda)^{-\alpha}
		 |\partial_\lambda E^{\pm}(\eta)-\partial_\lambda E^\pm(\lambda)| \big\|_{HS}	
		\les 1,
	\end{multline*}
	provided that $a(x), b(x)\lesssim \langle x\rangle^{-\frac{3}{2}-\alpha-}$.

\end{lemma}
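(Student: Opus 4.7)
The plan is to apply a Feshbach (Schur-complement) reduction to $M^\pm(\lambda)$ relative to the decomposition $I = P + Q$, exploiting the fact that by \eqref{Qorthogonality} we have $QP = PQ = 0$, so the singular term $\tilde g^\pm(\lambda)P$ in the expansion of Lemma~\ref{M exp lemma} lies entirely in the $PP$-block. Consequently the $QQ$-block $D(\lambda) := QM^\pm(\lambda)Q = QTQ + QE_1^\pm Q$ is a small perturbation of $QTQ$, which by assumption~A5) has bounded inverse $QD_0Q$ on $Q(L^2\times L^2)$; all of the logarithmic threshold singularity gets pushed into the rank-one Schur complement on $\mathrm{Ran}\,P$.

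First I will invert $D(\lambda)$ by Neumann series. Since Lemma~\ref{M exp lemma} gives $\|QE_1^\pm Q\|_{HS} \les \lambda^{1/2}$ and $QD_0Q$ is absolutely bounded, for $\lambda_1$ sufficiently small one has
\begin{equation*}
D(\lambda)^{-1} = QD_0Q + F^\pm(\lambda), \qquad F^\pm(\lambda) = -\sum_{k\ge 1}\bigl(QD_0Q \cdot QE_1^\pm Q\bigr)^k QD_0Q,
\end{equation*}
and term-by-term the Leibniz rule transfers the size, derivative, and Hölder bounds on $E_1^\pm$ to $F^\pm$. With $A := PM^\pm P$, $B := PM^\pm Q$, $C := QM^\pm P$, I then form the Schur complement $\sigma(\lambda) := A - BD^{-1}C$ on $\mathrm{Ran}\,P$. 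Using $QP = PQ = 0$ and Lemma~\ref{M exp lemma} one computes
\begin{equation*}
\sigma(\lambda) = \bigl(\tilde g^\pm(\lambda) + c\bigr) P + \widetilde E^\pm(\lambda) = h^\pm(\lambda)P + \widetilde E^\pm(\lambda),
\end{equation*}
where $c \in \R$ is the scalar characterized by $P(T - TQD_0QT)P = cP$ (real because $T$, $D_0$, and the range vector $(a,b)^T$ are all real-valued), and $\widetilde E^\pm$ inherits the bounds on $E_1^\pm$ and $F^\pm$. Since $|h^\pm(\lambda)| \gtrsim |\log \lambda|$ while $|\widetilde E^\pm| \les \lambda^{1/2}$, inverting on the one-dimensional $\mathrm{Ran}\,P$ gives $\sigma(\lambda)^{-1} = h^\pm(\lambda)^{-1}P + E_\sigma^\pm(\lambda)$ with $|E_\sigma^\pm| \les \lambda^{1/2}|\log\lambda|^{-2}$.

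Assembling via the Feshbach identity
\begin{equation*}
M^\pm(\lambda)^{-1} = \sigma^{-1} - \sigma^{-1}BD^{-1} - D^{-1}C\sigma^{-1} + D^{-1} + D^{-1}C\sigma^{-1}BD^{-1},
\end{equation*}
and substituting $\sigma^{-1} = h^\pm(\lambda)^{-1}P + E_\sigma^\pm$, $D^{-1} = QD_0Q + F^\pm$, $B = PTQ + PE_1^\pm Q$, $C = QTP + QE_1^\pm P$, the leading-order contributions combine to exactly $h^\pm(\lambda)^{-1}S + QD_0Q$ with $S$ as in \eqref{S_defn}; reality of the kernel of $S$ is then immediate. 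Each of the remaining terms contains at least one factor of size $O(\lambda^{1/2})$ (coming from $E_\sigma^\pm$, $F^\pm$, or an $E_1^\pm$-piece of $B$, $C$) together with absolutely bounded operators, so collecting these terms into $E^\pm(\lambda)$ gives the size bound $\|\lambda^{-1/2}E^\pm\|_{HS} \les 1$ at once.

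The main obstacle is the Leibniz-rule bookkeeping for the derivative and Hölder estimates on $E^\pm$. The only factor that could cause trouble is $\partial_\lambda h^\pm(\lambda)^{-1} = O(\lambda^{-1}|\log\lambda|^{-2})$ appearing inside $\partial_\lambda E_\sigma^\pm$, but it always appears multiplied by an $O(\lambda^{1/2})$ factor coming from $\widetilde E^\pm$, producing a contribution of order $\lambda^{-1/2}|\log\lambda|^{-2}$, well within the required $\lambda^{-1/2}$ derivative bound. The Hölder bound on $\partial_\lambda E^\pm$ follows by the same product-rule accounting from the Hölder bound on $\partial_\lambda E_1^\pm$ given in Lemma~\ref{M exp lemma}, which is where the extra decay hypothesis $a(x), b(x) \les \langle x\rangle^{-3/2 - \alpha -}$ enters.
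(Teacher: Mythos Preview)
Your proposal is correct and follows essentially the same Feshbach/Schur-complement strategy as the paper, with only a minor reordering: the paper first applies the Feshbach formula to the ``clean'' principal part $A(\lambda)=\widetilde g^\pm(\lambda)P+T$ to obtain $A(\lambda)^{-1}=h^\pm(\lambda)^{-1}S+QD_0Q$ exactly, and then handles $E_1^\pm$ in one stroke via the Neumann series $M^{-1}=A^{-1}(\mathbbm{1}+E_1A^{-1})^{-1}$, whereas you carry $E_1^\pm$ through the Feshbach computation block by block. Both routes yield the same leading term and the same error bounds, though the paper's ordering isolates the error analysis into a single Neumann expansion rather than distributing it across the blocks.
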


\begin{proof}

	We give a proof for the operator $M^+(\lambda)$, the expansion for $M^-(\lambda)$ is similar.  We drop the subscript
	`+' from the formulas.  Using Lemma~\ref{M exp lemma} with respect to the decomposition of
	$L^2\times L^2=P(L^2\times L^2)\oplus Q(L^2\times L^2)$,
	\begin{align*}
		M(\lambda)=\left[\begin{array}{cc} \tilde g(\lambda)P+PTP & PTQ \\ QTP & QTQ
		\end{array}\right]+E_1(\lambda).
	\end{align*}
	Denote the matrix component of the above equation by $A(\lambda)=\{a_{ij}(\lambda)\}_{i,j=1}^{2}$.

    Since  $QTQ$ is invertible by assumption, by the Fehsbach formula invertibility of
    $A(\lambda)$ hinges upon the existence
    of $d=(a_{11}-a_{12}a_{22}^{-1}a_{21})^{-1}$.
    Denoting $D_0=(QTQ)^{-1}:Q(L^2\times L^2)\to Q(L^2\times L^2)$, we have
	\begin{align*}
		a_{11}-a_{12}a_{22}^{-1}a_{21}= \widetilde g(\lambda)P+PTP-PTQD_0QTP  =h(\lambda) P
	\end{align*}
	with $h(\lambda)=\widetilde g(\lambda)+Tr(PTP-PTQD_0QTP)=\widetilde g(\lambda)+c$,
	where $c\in\R$ as the kernels of $T$, $QD_0Q$ and
	$v_1, v_2$ are real-valued. The invertibility of this operator on $PL^2$ for small $\lambda$ follows from
	\eqref{g form}.
	Thus, by the
	Fehsbach formula,
	\begin{align}\nonumber
		A(\lambda)^{-1}&=\left[\begin{array}{cc} d & -da_{12}a_{22}^{-1}\\
		-a_{22}^{-1}a_{21}d & a_{22}^{-1}a_{21}da_{12}a_{22}^{-1}+a_{22}^{-1}
		\end{array}\right]\\
		&=h^{-1}(\lambda)\left[\begin{array}{cc} P & -PTQD_0Q\\ -QD_0QTP & QD_0QTPTQD_0Q
		\end{array}\right]+QD_0Q =: h^{-1}(\lambda)S+QD_0Q. \label{Ainverse}
	\end{align}
    Note that $S$ has finite rank. This and the absolute boundedness of $QD_0Q$ imply that $A^{-1}$
    is absolutely bounded.  To avoid confusion, we will write $S$ as a sum of four components rather than in
    a matrix form.

    	Finally, we write
    	$$
    		M(\lambda)=A(\lambda)+E_1(\lambda)=[\mathbbm{1}+E_1(\lambda) A^{-1}(\lambda)] A(\lambda).
    	$$
	Therefore, by a Neumann series expansion, we have
	\be\label{M plus S}
        M^{-1}(\lambda) =A^{-1}(\lambda)
        \big[\mathbbm{1}+E_1(\lambda) A^{-1}(\lambda)\big]^{-1}=h(\lambda)^{-1}S
        +QD_0Q+E(\lambda),
	\ee
  	The error bounds follow in light of the bounds for $E_1(\lambda)$ in Lemma~\ref{M exp lemma}
	and the fact that, as an absolutely bound operator on $L^2$, $|A^{-1}(\lambda)|\les 1$,
	$|\partial_\lambda  A^{-1}(\lambda)|\les \lambda^{-1}$, and (for $0<\lambda<\eta<\lambda_1$)
  	$$|\partial_\lambda  A^{-1}(\lambda)-\partial_\lambda
	A^{-1}(\eta)|\les (\eta-\lambda)^\alpha \lambda^{-1-\alpha}.$$
  	In the Lipschitz estimate, the factor $\lambda^{-\f12-\alpha}$  arises from the case when
	the derivative hits $A^{-1}(\lambda)$.

\end{proof}

We finish this section by noting that, using Lemma~\ref{lem:Minverse} in \eqref{symm resolv id}, one gets
\be\label{resolve}
\mathcal R_V^{\pm}(\mu+\lambda^2)=\mathcal R_0^{\pm}(\mu+\lambda^2)-\mathcal R_0^{\pm}(\mu+\lambda^2)
	v_1[h^\pm(\lambda)^{-1}S+QD_0Q+E^{\pm}(\lambda)]
	v_2\mathcal R_0^\pm(\mu+\lambda^2).
\ee

\section{Proof of Theorem~\ref{thm:weighted} for energies close to $\mu$}\label{sec:lowweighted}

Let $\chi$ be a smooth cut-off for $[0,\lambda_1]$, where $\lambda_1$ is sufficiently small so that the expansions in the previous section are valid.
We have
\begin{theorem}\label{lowprop} Fix $0<\alpha<1/4$. Let $|a(x)|+|b(x)|\lesssim \la x\ra^{-\f32-\alpha-}.$
For any $t > 2$, we have
\be\label{stone2}
\Big|\int_0^\infty e^{it\lambda^2}\lambda \chi(\lambda)  [\mathcal R_V^+(\mu+\lambda^2)-\mathcal R_V^-(\mu+\lambda^2)](x,y)\,
d\lambda\Big| \les \frac{\sqrt{w(x)w(y)}}{t\log^2(t)}+\f{\la x\ra^{\f32 } \la y\ra^{\f32 }}{t^{1+\alpha}}.
\ee
\end{theorem}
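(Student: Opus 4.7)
The plan is to plug the resolvent identity \eqref{resolve} and the expansion $\mathcal R_0^\pm = g^\pm(\lambda)M_{11}+\mathcal G_0+E_0^\pm(\lambda)$ of Lemma~\ref{R0 exp cor} into the integral in \eqref{stone2}, then decompose the resulting $M^{\pm}(\lambda)^{-1}$ factor according to Lemma~\ref{lem:Minverse}. The free piece $\mathcal R_0^+-\mathcal R_0^-$ reduces by \eqref{r0low2} to a scalar $J_0$-integral, which is $O(1/t)$ uniformly in $x,y$ by stationary phase and is absorbed into the first summand of the bound. The main work goes into the triple product
\[
-\int_0^\infty e^{it\lambda^2}\lambda\chi(\lambda)\,\mathcal R_0^{\pm}\,v_1\bigl[h^{\pm}(\lambda)^{-1}S+QD_0Q+E^{\pm}(\lambda)\bigr]v_2\,\mathcal R_0^{\pm}(x,y)\,d\lambda,
\]
which I would handle term by term.

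For the leading $h^{\pm}(\lambda)^{-1}S$ contribution, I would expand each outer $\mathcal R_0^{\pm}$ into its three pieces from Lemma~\ref{R0 exp cor} and collect the resulting finite sum. After the $\pm$ subtraction, each summand factors as a bounded (or logarithmically controlled) spatial kernel in $x,y$ times a one-dimensional oscillatory integral $\int e^{it\lambda^2}\lambda\chi(\lambda)\Phi(\lambda)\,d\lambda$, where $\Phi$ is a rational expression in $g^{\pm}(\lambda)$ and $h^{\pm}(\lambda)^{-1}\sim 1/\log\lambda$ near $\lambda=0$. These scalar $\lambda$-integrals are precisely the ones estimated in the scalar analysis of \cite{EG2}, where the $\pm$ cancellation of the imaginary parts together with the $1/\log\lambda$ behavior produces the $1/(t\log^2 t)$ decay. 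The weight $\sqrt{w(x)w(y)}$ arises from the $\log|x-y|$ kernel of $G_0$ entering the spatial side, where \eqref{Qorthogonality} will be used to replace $\log|x-y|$ by $\log|x-y|-\log|x|$ (and symmetrically in $y$) against the $Q$-components of $S$.

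For the $QD_0Q$ piece, the orthogonality \eqref{Qorthogonality} is decisive: since $Q$ sits at both ends of $D_0$, the singular $g^{\pm}(\lambda)M_{11}$ factors from both outer $\mathcal R_0^{\pm}$ expansions are annihilated, leaving only the regular parts $\mathcal G_0+E_0^{\pm}(\lambda)$. The resulting $\lambda$-integrand is $\widetilde O_2(1)$ with spatial factor at most $\langle x\rangle^{3/2}\langle y\rangle^{3/2}$ (coming from $E_0^{\pm}$ in Lemma~\ref{R0 exp cor}), and an IBP-plus-Hölder argument using the $\alpha$-Hölder estimate on $\partial_\lambda$ from Corollary~\ref{R0 exp cor2} yields the desired $t^{-1-\alpha}\langle x\rangle^{3/2}\langle y\rangle^{3/2}$. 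For the $E^{\pm}(\lambda)$ piece the orthogonality is unavailable, but the HS-smallness $|E^{\pm}|\les\lambda^{1/2}$ and the $\alpha$-Hölder estimate of $\partial_\lambda E^{\pm}$ from Lemma~\ref{lem:Minverse} absorb the logarithmic growth of the $g^{\pm}(\lambda)M_{11}$ factors; the same IBP-plus-Hölder scheme then produces the same bound.

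The hard part will be the algebraic bookkeeping in the $S$-term: the four summands in \eqref{S_defn}, combined with the nine possible pairings of the three pieces of the two outer $\mathcal R_0^{\pm}$ expansions, produce a long list of scalar $x,y$-kernels paired with $\lambda$-integrands of various logarithmic types. One must identify which pairings actually survive after the $\pm$-difference, and ensure that the log-weight $\sqrt{w(x)w(y)}$ arising on the spatial side is exactly balanced by the $1/\log^2 t$ decay produced on the temporal side. Once each such term has been isolated, the one-dimensional $\lambda$-integral is of a form already treated in \cite{EG2}, so no genuinely new oscillatory analysis is required; the new content is confined to the matrix algebra and the identification of the correct scalar kernels.
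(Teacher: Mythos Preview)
Your overall decomposition mirrors the paper's, but there is a genuine gap at the very first step: the free resolvent contribution is \emph{not} absorbable into the stated bound. By Proposition~\ref{freeevol},
\[
\int_0^\infty e^{it\lambda^2}\lambda\chi(\lambda)\,[\mathcal R_0^+-\mathcal R_0^-](x,y)\,d\lambda
=-\frac{1}{4t}M_{11}+O\!\Big(\frac{\la x\ra^{3/2}\la y\ra^{3/2}}{t^{5/4}}\Big),
\]
so the leading term decays exactly like $1/t$, which is strictly slower than either $\sqrt{w(x)w(y)}/(t\log^2 t)$ or $\la x\ra^{3/2}\la y\ra^{3/2}/t^{1+\alpha}$. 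Your claim that this $O(1/t)$ piece ``is absorbed into the first summand of the bound'' is therefore false.

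The same issue appears, symmetrically, in your treatment of the $S$-term. You assert that after the $\pm$ subtraction every resulting $\lambda$-integral carries a $1/\log\lambda$ factor and hence produces $1/(t\log^2 t)$ decay. This is not so for the $(g^\pm)^2/h^\pm$ coefficient multiplying $M_{11}v_1Sv_2M_{11}$: using $M_{11}v_1Sv_2M_{11}=-\|a^2+b^2\|_1 M_{11}$ and $h^\pm=-\|a^2+b^2\|_1 g^\pm+c$, one computes (see \eqref{new28}) that this piece equals $\frac{i}{2}M_{11}$ plus a genuine $1/\log^2$ remainder, and the $\frac{i}{2}M_{11}$ part integrates to $-\frac{1}{4t}M_{11}+O(t^{-2})$. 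The whole theorem hinges on the fact that this $-\frac{1}{4t}M_{11}$ from the $S$-term enters \eqref{resolve} with the opposite sign and \emph{exactly cancels} the $-\frac{1}{4t}M_{11}$ from the free resolvent. Without isolating these two leading constants and observing their cancellation, the argument cannot close. (As a minor point: the $\sqrt{w(x)w(y)}$ weight on the $\mathcal G_0 v_1 S v_2 \mathcal G_0$ kernel comes directly from the pointwise bound \eqref{G0 bound}, not from the $Q$-orthogonality trick you describe; orthogonality \eqref{Qorthogonality} is used instead to simplify $M_{11}v_1Sv_2M_{11}$ and, crucially, to kill the $g^\pm M_{11}$ factors in the $QD_0Q$ piece.)
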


In the proof of this theorem we need the following Lemmas, which are standard and their proofs can be found in  \cite{EG2}.
\begin{lemma} \label{lem:ibp} For $t>2$, we have
$$
\Big|\int_0^\infty e^{it\lambda^2} \lambda \, \mathcal E(\lambda) d\lambda -\f{i\mathcal E(0)}{2t}\Big| \les \f1t\int_0^{t^{-1/2}}|\mathcal E^\prime(\lambda)| d\lambda+ \Big|\frac{\mathcal{E}^\prime(t^{-1/2})}{t^{3/2}}\Big|
+\f1{t^2}\int_{t^{-1/2}}^\infty \Big|\Big(\frac{\mathcal E^\prime(\lambda)}{\lambda}\Big)^\prime\Big| d\lambda.
$$
\end{lemma}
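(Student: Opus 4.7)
The proof is a standard two-step integration-by-parts argument, with the splitting point $\lambda=t^{-1/2}$ chosen so that the two resulting contributions balance. The starting observation is the identity
\be
e^{it\lambda^2}\lambda = \f{1}{2it}\,\pt_\lambda e^{it\lambda^2}, \qquad e^{it\lambda^2} = \f{1}{2it\lambda}\,\pt_\lambda e^{it\lambda^2},
\ee
which lets us exchange the oscillatory factor for a derivative at the cost of powers of $t$ and $\lambda$.

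The plan is to first apply integration by parts once using the first identity above. Assuming $\mathcal E$ vanishes at infinity (which holds in the applications because of the cutoff $\chi$), the boundary term at $\lambda=0$ produces $-\mathcal E(0)/(2it)=i\mathcal E(0)/(2t)$, and we obtain
\be
\int_0^\infty e^{it\lambda^2}\lambda\,\mathcal E(\lambda)\,d\lambda - \f{i\mathcal E(0)}{2t} = -\f{1}{2it}\int_0^\infty e^{it\lambda^2}\mathcal E'(\lambda)\,d\lambda.
\ee
I would then split the remaining integral at $\lambda=t^{-1/2}$ and estimate the two halves differently.

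On $[0,t^{-1/2}]$ the oscillatory factor has no useful sign information, so I just pull absolute values inside to get $\f{1}{2t}\int_0^{t^{-1/2}}|\mathcal E'(\lambda)|\,d\lambda$, which is the first term on the right-hand side of the lemma. On $[t^{-1/2},\infty)$, I would apply the second identity above to integrate by parts once more, rewriting
\be
-\f{1}{2it}\int_{t^{-1/2}}^\infty e^{it\lambda^2}\mathcal E'(\lambda)\,d\lambda = -\f{1}{2it}\int_{t^{-1/2}}^\infty \f{\mathcal E'(\lambda)}{2it\lambda}\,\pt_\lambda e^{it\lambda^2}\,d\lambda.
\ee
Pushing the derivative onto $\mathcal E'(\lambda)/\lambda$ produces a boundary contribution at $\lambda=t^{-1/2}$ of size $\bigl|\mathcal E'(t^{-1/2})\bigr|/t^{3/2}$ (after accounting for the $1/(t\lambda)$ evaluated at $\lambda=t^{-1/2}=t^{1/2}/t$), matching the second term, and a bulk contribution bounded by $\f{1}{4t^2}\int_{t^{-1/2}}^\infty\bigl|(\mathcal E'(\lambda)/\lambda)'\bigr|\,d\lambda$, matching the third term. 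The boundary term at infinity is assumed to vanish, which is consistent with the intended use where $\mathcal E$ is compactly supported via $\chi$.

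There is no real analytic obstacle here; the only subtlety is choosing the split point $t^{-1/2}$, which is dictated by equating the contribution of a trivial $L^1$ bound on the first IBP error with the boundary term produced by the second IBP. Everything else is bookkeeping, and the cutoff assumption on $\mathcal E$ at infinity is implicit in the statement so that all boundary terms at $\lambda=\infty$ drop.
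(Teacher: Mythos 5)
Your proof is correct and is exactly the standard two-step integration-by-parts argument that the paper has in mind (the paper itself does not reproduce the proof here but cites \cite{EG2} for it). The decomposition at $\lambda=t^{-1/2}$, the extraction of the boundary term $i\mathcal E(0)/(2t)$ in the first integration by parts, and the second integration by parts on $[t^{-1/2},\infty)$ producing the boundary term $|\mathcal E'(t^{-1/2})|/t^{3/2}$ and the bulk term $t^{-2}\int |(\mathcal E'/\lambda)'|$ all match.
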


\begin{lemma}\label{lem:ibp2} Assume that  $\mathcal E(0)=0$. For $t>2$, we have
\be\label{ibp2}
\Big|\int_0^\infty e^{it\lambda^2} \lambda \, \mathcal E(\lambda) d\lambda  \Big|
\les \f1t\int_0^{\infty}\frac{|\mathcal E^\prime(\lambda)|}{  (1+\lambda^2 t)} d\lambda
+\f1{t}\int_{t^{-1/2}}^\infty \big|  \mathcal E^\prime(\lambda \sqrt{1+ \pi t^{-1}\lambda^{-2}} )-\mathcal E^\prime(\lambda)  \big| d\lambda.
\ee
\end{lemma}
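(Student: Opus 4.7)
The plan is to exploit the fact that the oscillatory factor $e^{it\lambda^2}$ has an ``almost period'' under the substitution $\lambda\mapsto \phi(\lambda):=\sqrt{\lambda^2+\pi/t}$: since $\phi(\lambda)^2=\lambda^2+\pi/t$, we have $e^{it\phi(\lambda)^2}=e^{i\pi}e^{it\lambda^2}=-e^{it\lambda^2}$. Adding the original integral to its shifted form therefore converts $\mathcal E(\lambda)$ into a difference of the form $\mathcal E(\lambda)-\mathcal E(\phi(\lambda))$. A second iteration of the trick, performed after an application of Fubini/explicit inner integral, promotes this difference to the ``second-order'' increment $\mathcal E'(s)-\frac{s}{\phi(s)}\mathcal E'(\phi(s))$, from which the increment $\mathcal E'(\phi(s))-\mathcal E'(s)$ on the right-hand side of the lemma is extracted.

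Concretely, split $I=\int_0^{\sqrt{\pi/t}}+\int_{\sqrt{\pi/t}}^\infty$ and substitute $\lambda=\phi(\eta)$ in the second integral (so $\lambda\,d\lambda=\eta\,d\eta$ and $e^{it\lambda^2}=-e^{it\eta^2}$). Adding the two expressions for $I$ yields
\begin{equation*}
 2I = \int_0^{\sqrt{\pi/t}} e^{it\lambda^2}\lambda\mathcal E(\lambda)\,d\lambda \;+\; \int_0^\infty e^{it\eta^2}\eta\bigl[\mathcal E(\eta)-\mathcal E(\phi(\eta))\bigr]\,d\eta \;=:\; A + J.
\end{equation*}
The term $A$ is bounded by $\frac{1}{t}\int_0^\infty \frac{|\mathcal E'(\lambda)|}{1+\lambda^2 t}\,d\lambda$ using $\mathcal E(0)=0$ (write $|\mathcal E(\lambda)|\le\int_0^\lambda|\mathcal E'(s)|\,ds$, apply Fubini, and note $1+\lambda^2 t\asymp 1$ on $(0,\sqrt{\pi/t})$). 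For $J$, write $\mathcal E(\eta)-\mathcal E(\phi(\eta))=-\int_\eta^{\phi(\eta)}\mathcal E'(s)\,ds$, interchange the order of integration, and evaluate the inner integrals exactly: $\int_0^s e^{it\eta^2}\eta\,d\eta = (e^{its^2}-1)/(2it)$ and (crucially, using the almost-period) $\int_{\sqrt{s^2-\pi/t}}^s e^{it\eta^2}\eta\,d\eta = e^{its^2}/(it)$. This produces
\begin{equation*}
 J = \frac{1}{2it}\int_0^{\sqrt{\pi/t}}(1-e^{its^2})\mathcal E'(s)\,ds \;-\; \frac{1}{it}\,M,\qquad M:=\int_{\sqrt{\pi/t}}^\infty e^{its^2}\mathcal E'(s)\,ds.
\end{equation*}
The first piece is again controlled by the first right-hand side term via $|1-e^{its^2}|\le ts^2\le\pi$, reducing the problem to estimating $|M|/t$.

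To bound $M$, apply the substitution $s=\phi(u)$ once more to get $M=-\int_0^\infty e^{itu^2}\mathcal E'(\phi(u))\frac{u}{\phi(u)}\,du$. Adding this to the direct expression for $M$ (after extending and subtracting the integral on $(0,\sqrt{\pi/t})$) gives
\begin{equation*}
 2M = -\int_0^{\sqrt{\pi/t}} e^{its^2}\mathcal E'(\phi(s))\tfrac{s}{\phi(s)}\,ds \;+\; \int_{\sqrt{\pi/t}}^\infty e^{its^2}\Bigl[\mathcal E'(s)-\tfrac{s}{\phi(s)}\mathcal E'(\phi(s))\Bigr]\,ds.
\end{equation*}
Passing to absolute values, I decompose $\mathcal E'(s)-\frac{s}{\phi(s)}\mathcal E'(\phi(s)) = [\mathcal E'(s)-\mathcal E'(\phi(s))]+[1-\frac{s}{\phi(s)}]\mathcal E'(\phi(s))$. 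The first bracket, integrated on $(\sqrt{\pi/t},\infty)\subset(t^{-1/2},\infty)$, is exactly the second right-hand side term of the lemma. The second bracket uses the elementary estimate $1-s/\phi(s)=(\pi/t)/(\phi(s)(\phi(s)+s))\le \pi/(2ts^2)$ on $s>0$; changing variables $u=\phi(s)$ and exploiting $(u^2-\pi/t)^{3/2}\ge u^3/(2\sqrt 2)$ for $u\ge\sqrt{2\pi/t}$ dominates it by $\int_0^\infty|\mathcal E'(u)|/(1+u^2 t)\,du$.

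The main obstacle is the boundary piece $\int_0^{\sqrt{\pi/t}}|\mathcal E'(\phi(s))|(s/\phi(s))\,ds$: the naive change of variables $u=\phi(s)$ carries the Jacobian $u/\sqrt{u^2-\pi/t}$, which is non-integrable near $u=\sqrt{\pi/t}$ against a generic $L^1$ density and would blow the argument up on the intermediate interval. The saving observation is that the weight $s/\phi(s)=\sqrt{u^2-\pi/t}/u$ inherited from the second shift cancels this Jacobian exactly, and the integral collapses to $\int_{\sqrt{\pi/t}}^{\sqrt{2\pi/t}}|\mathcal E'(u)|\,du$, which is trivially $\lesssim\int_0^\infty|\mathcal E'(u)|/(1+u^2t)\,du$ because $1+u^2 t$ is bounded above on this short interval. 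This precise cancellation between the intrinsic weight and the Jacobian is what dictates the specific form $\lambda\sqrt{1+\pi t^{-1}\lambda^{-2}}$ appearing in the statement.
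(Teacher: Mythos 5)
Your proof is correct, and I believe it is essentially the same approach the paper (via the cited reference [EG2]) must use: the appearance of the shifted argument $\lambda\sqrt{1+\pi t^{-1}\lambda^{-2}}$ and the threshold $t^{-1/2}$ in the statement are dictated precisely by the half-period substitution $\lambda\mapsto\sqrt{\lambda^2+\pi/t}$ applied twice, which is your argument. All the individual steps check out: the exact evaluations $\int_0^s e^{it\eta^2}\eta\,d\eta=(e^{its^2}-1)/(2it)$ and $\int_{\sqrt{s^2-\pi/t}}^s e^{it\eta^2}\eta\,d\eta=e^{its^2}/(it)$ are right, the Fubini exchange in $J$ is legitimate, the decomposition of $\mathcal E'(s)-\tfrac{s}{\phi(s)}\mathcal E'(\phi(s))$ yields the two right-hand side terms as claimed, the estimate $1-s/\phi(s)\le\pi/(2ts^2)$ is correct, and the exact cancellation between the weight $s/\phi(s)$ and the Jacobian $u/\sqrt{u^2-\pi/t}$ in the boundary piece is the right observation (one indeed needs the weight $\lambda$ from the measure $\lambda\,d\lambda$ to survive to the second shift; performing a naive integration by parts first to pass to $\mathcal E'$ would destroy this cancellation). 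The fact that $\sqrt{\pi/t}>t^{-1/2}$ lets you enlarge the domain of the Lipschitz-type integral to $(t^{-1/2},\infty)$ as in the statement.
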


We start with the contribution of the free resolvent in \eqref{resolve} to \eqref{stone2}. Recall   \eqref{r0low2}:
$$
\mathcal R_0^+(\mu+ \lambda^2)(x,y)-\mathcal R_0^-(\mu+\lambda^2)(x,y)=\frac{i}2 J_0(\lambda|x-y|)M_{11}.
$$
Therefore, the following proposition follows from the corresponding bound for the scalar free resolvent, Proposition~4.3 in \cite{EG2}. The proof uses Lemma~\ref{lem:ibp} with $\mathcal E(\lambda)=\frac{i}2 J_0(\lambda|x-y|)$.
\begin{prop}\label{freeevol} We have
$$\int_0^\infty e^{it\lambda^2}\lambda \chi(\lambda)  [\mathcal R_0^+(\mu+\lambda^2)-\mathcal R_0^-(\mu+\lambda^2)](x,y)
\, d\lambda=-\frac1{4t} M_{11} +O\Big(\f{\la x \ra^{\f32 } \la y\ra^{\f32 }}{t^{\f54}}\Big).
$$
\end{prop}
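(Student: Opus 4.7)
The plan is to use the identity \eqref{r0low2} to reduce the matrix integral to a scalar integral involving only the Bessel function $J_0$, and then apply Lemma~\ref{lem:ibp} in the spirit of Proposition~4.3 of \cite{EG2}. Specifically, by \eqref{r0low2}, the left-hand side equals
\[
\frac{i}{2}\, M_{11} \int_0^\infty e^{it\lambda^2}\lambda\, \chi(\lambda)\, J_0(\lambda|x-y|)\, d\lambda,
\]
so the entire statement reduces to the scalar oscillatory integral whose kernel $\mathcal E(\lambda) := \tfrac{i}{2}\chi(\lambda) J_0(\lambda|x-y|)$ is precisely the one analyzed in \cite{EG2}. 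Since $J_0(0)=1$ and $\chi(0)=1$, one has $\mathcal E(0)=i/2$, and Lemma~\ref{lem:ibp} immediately produces the leading term
\[
\frac{i\,\mathcal E(0)}{2t}\,M_{11} \;=\; -\frac{1}{4t}\, M_{11}.
\]

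The next step is to control the three error terms appearing on the right-hand side of Lemma~\ref{lem:ibp}. Writing $r := |x-y|$ and using $J_0'(z)=-J_1(z)$, one has $\mathcal E'(\lambda) = -\tfrac{i}{2}r\chi(\lambda)J_1(\lambda r) + \tfrac{i}{2}\chi'(\lambda)J_0(\lambda r)$. The small-argument expansion $J_1(z) = \tfrac{z}{2} + O(z^3)$ shows $|\mathcal E'(\lambda)|\lesssim \lambda r^2$ for $\lambda r \lesssim 1$, while the asymptotic \eqref{JYasymp2} gives $|\mathcal E'(\lambda)| \lesssim r^{1/2}\lambda^{-1/2}$ in the complementary regime (plus a rapidly decaying piece from $\chi'$). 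These two regimes, together with one more derivative in $\lambda$, give precisely the bounds used in the scalar case of \cite{EG2} and produce an error of size $\langle x\rangle^{3/2}\langle y\rangle^{3/2}/t^{5/4}$ after trivially estimating $r^{3/2}\lesssim \langle x\rangle^{3/2}\langle y\rangle^{3/2}$.

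Since these calculations are exactly those carried out in Proposition~4.3 of \cite{EG2} for the scalar free resolvent, I would simply invoke that proposition applied to the $M_{11}$-entry. In brief, the proof consists of (i) the reduction via \eqref{r0low2}, (ii) the computation of $\mathcal E(0)$ yielding the $-1/(4t)$ main term, and (iii) a citation of the scalar computation for the error. There is no substantive obstacle: the only potential subtlety, namely the apparently non-integrable behavior $r^{1/2}\lambda^{-1/2}$ of $\mathcal E'$ near zero, is harmless because that factor only appears in the large-$\lambda r$ regime where $\lambda \gtrsim r^{-1}$, and the bound $r^{3/2}\le \langle x\rangle^{3/2}\langle y\rangle^{3/2}$ suffices.
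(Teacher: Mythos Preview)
Your proposal is correct and follows essentially the same approach as the paper: the paper also reduces to the scalar case via \eqref{r0low2}, applies Lemma~\ref{lem:ibp} with $\mathcal E(\lambda)=\tfrac{i}{2}\chi(\lambda)J_0(\lambda|x-y|)$, and then cites Proposition~4.3 of \cite{EG2} for the remaining scalar computation. Your additional detail on the Bessel function estimates is accurate and fleshes out what the paper leaves to the citation.
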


Now consider the contribution of the term involving $(h^{\pm})^{-1}S$   in \eqref{resolve} to \eqref{stone2}. Using Lemma~\ref{R0 exp cor} we have
\begin{multline}\label{Scontribution}
	 \frac{\mathcal R_0^{+}  v_1Sv_2\mathcal R_0^{+} }{h^+ }
	 -\frac{\mathcal R_0^{-} v_1Sv_2\mathcal R_0^{-} }{h^- } 
	 =  \big(\frac{(g^+)^2}{h^+ }
	 - \frac{(g^-)^2}{h^- }  \big) M_{11}v_1Sv_2M_{11} \\
	+ \big(\frac{1}{h^+ } - \frac{1}{h^- }  \big) \mathcal G_0 v_1Sv_2\mathcal G_0 
	+\big(\frac{g^+  }{h^+ } - \frac{g^-  }{h^- }  \big)
	(M_{11}v_1Sv_2\mathcal G_0+\mathcal G_0v_1Sv_2M_{11})
	+E_2^+ -E_2^-,
\end{multline}
where
\begin{align*}
	E_2^{\pm} =\frac{E_0^\pm v_1Sv_2 \big(g^\pm M_{11} +\mathcal G_0\big) }{h^\pm } +
	\frac{\big(g^\pm M_{11}+\mathcal G_0\big) v_1Sv_2 E_0^{\pm}   }{h^\pm }
	+\frac{E_0^{\pm} v_1Sv_2 E_0^{\pm}  }{h^{\pm}  }.
\end{align*}
Using the orthogonality property \eqref{Qorthogonality} and the definition \eqref{S_defn} of $S$, we obtain
\begin{align*}
	M_{11} v_1 S v_2 M_{11}=M_{11} v_1 P v_2 M_{11}=-\|a^2+b^2\|_{L^1(\R^2)} M_{11}.
\end{align*}
Also recall that $h^\pm(\lambda)=-\|a^2+b^2\|_{L^1} g^\pm(\lambda) +c$, $c\in\R$, and (from \eqref{g form})  that $g^{+}(\lambda)=-\frac1{2\pi}\log\lambda+z$ with
$g^-(\lambda)=\overline{g^+(\lambda)}$ and $z-\overline{z}=\frac{i}{2}$. Therefore we can  write
\begin{multline}\label{new28}
	\eqref{Scontribution}=\frac{i}{2}M_{11}+\frac{ia_1}{(\log(\lambda)+b_1)^2+c_1^2} M_{11}
	+\frac{ia_2}{(\log(\lambda)+b_2)^2+c_2^2}\mathcal G_0 v_1Sv_2\mathcal G_0\\
	+\frac{ia_3}{(\log(\lambda)+b_3)^2+c_3^2}
	(M_{11}v_1Sv_2\mathcal G_0+\mathcal G_0v_1Sv_2M_{11})+E_2^+(\lambda)-E_2^-(\lambda),
\end{multline}
where $a_i,b_i,c_i$ are real. Using this the following proposition will follow from the bounds obtained in \cite{EG2}.
\begin{prop}\label{Scontr} Let $0<\alpha<1/4$. If $|a(x)|+|b(x)|\les \la x\ra^{-\f32-\alpha-}$, then
we have
\begin{multline*}\int_0^\infty e^{it\lambda^2}\lambda \chi(\lambda)  \Big[\frac{\mathcal R_0^{+}(\mu+\lambda^2)v_1Sv_2\mathcal R_0^{+}(\mu+\lambda^2)}{h^+(\lambda)}
	 -\frac{\mathcal R_0^{-}(\mu+\lambda^2)v_1Sv_2\mathcal R_0^{-}(\mu+\lambda^2)}{h^-(\lambda)}\Big](x,y)
	\, d\lambda\\
=-\frac1{4t} M_{11} +  O\Big( \f{\sqrt{w(x)w(y)}  }{t\log^2(t)} \Big)+O\Big( \f{\la x\ra^{\f12+\alpha+} \la y\ra^{\f12+\alpha+}}{t^{1+\alpha} } \Big).
\end{multline*}
\end{prop}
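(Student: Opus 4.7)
The plan is to split the integrand in \eqref{new28} into three groups and handle each separately. The first is the constant $\f{i}{2}M_{11}$; the second consists of the three pieces $f_j(\lambda)A_j$ with $f_j(\lambda)=\f{ia_j}{(\log\lambda+b_j)^2+c_j^2}$ and $A_j$ a fixed $\lambda$-independent operator; the third is the error $E_2^+-E_2^-$. The first two groups will produce the main term $-\f{1}{4t}M_{11}$ (from the constant piece) and the $\f{\sqrt{w(x)w(y)}}{t\log^2(t)}$ remainder (from the logarithmic pieces), while the third will yield the $t^{-1-\alpha}$ contribution.

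For the constant piece, Lemma~\ref{lem:ibp} with $\mathcal E(\lambda)=\f{i}{2}\chi(\lambda)M_{11}$ gives $-\f{1}{4t}M_{11}$ plus $O(t^{-2})$, since $\chi'$ is supported away from zero. For each of the three logarithmic pieces, $f_j(0^+)=0$ because $\log\lambda\to-\infty$, so Lemma~\ref{lem:ibp2} applies; the scalar version of this $\lambda$-integral was shown in \cite{EG2} to be of size $1/(t\log^2(t))$. All $x,y$-weights then come from the operator kernels $A_j$. The operator $M_{11}v_1Pv_2M_{11}=-\|a^2+b^2\|_{L^1}M_{11}$ has no spatial dependence. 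The kernel $\mathcal G_0v_1Sv_2\mathcal G_0(x,y)$ inherits logarithmic growth in each variable through the $-\f{1}{2\pi}\log|x-\cdot|$ behavior of $\mathcal G_0$ in its upper-left entry, and is bounded by $\log(2+|x|)\log(2+|y|)=\sqrt{w(x)w(y)}$; the two mixed kernels $M_{11}v_1Sv_2\mathcal G_0$ and $\mathcal G_0v_1Sv_2M_{11}$ grow logarithmically in only one variable and are again majorized by $\sqrt{w(x)w(y)}$. The decay of $v_1,v_2$ from A3) controls the intermediate integrations.

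For $E_2^+-E_2^-$, every summand carries at least one factor of $E_0^\pm$, whose size $\lambda^{\f12}\la x-y\ra^{\f12}$ from Lemma~\ref{R0 exp cor} vanishes at $\lambda=0$. Since $g^\pm/h^\pm$ is bounded and $1/h^\pm\to 0$ as $\lambda\to 0$, the integrand $\mathcal E(\lambda)$ for each summand of $E_2^\pm$ satisfies $\mathcal E(0)=0$, so Lemma~\ref{lem:ibp2} applies. The spatial factors $\la x-y\ra^{\f12}$ and $\la x-y\ra^{\f32}$ (the latter appearing after two $\lambda$-derivatives), paired with the $\la\cdot\ra^{-\f32-\alpha-}$ decay of $v_1,v_2$, leave a prefactor of order $\la x\ra^{\f12+\alpha+}\la y\ra^{\f12+\alpha+}$. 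The $\lambda^{-1/2}$ blow-up of $\partial_\lambda E_0^\pm$ is integrable against $1/(1+t\lambda^2)$, and the $\alpha$-H\"older modulus of $\partial_\lambda E_0^\pm$ from Corollary~\ref{R0 exp cor2} supplies the gain producing the $t^{-1-\alpha}$ decay.

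The principal obstacle is the bookkeeping in the $E_2^\pm$ analysis: one must carry the prefactor $1/h^\pm(\lambda)\sim 1/|\log\lambda|$ through the oscillatory estimates without spoiling the H\"older gain, and must track the interaction of the spatial growth of $E_0^\pm$, its derivative, and its H\"older modulus with $v_1, v_2$ and with the fixed operator $S$. Once each piece is reduced to a one-dimensional $\lambda$-integral of the form handled in \cite{EG2}, no genuinely new oscillatory estimate beyond Lemmas~\ref{lem:ibp} and~\ref{lem:ibp2} is needed.
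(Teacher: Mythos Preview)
Your proposal is correct and follows essentially the same route as the paper: split \eqref{new28} into the constant term (handled by Lemma~\ref{lem:ibp}), the three logarithmic terms (reduced to the scalar bound \eqref{log2t} from \cite{EG2} together with the kernel estimate $|\mathcal G_0 v_1 S v_2 \mathcal G_0(x,y)|\les \sqrt{w(x)w(y)}$), and the error $E_2^\pm$ (handled by Lemma~\ref{lem:ibp2} via the derivative and Lipschitz bounds of Lemma~\ref{R0 exp cor} and Corollary~\ref{R0 exp cor2}). The only cosmetic difference is that the paper cites Lemma~\ref{lem:ibp} rather than Lemma~\ref{lem:ibp2} for the scalar logarithmic integral, but both routes defer to the same estimate in \cite{EG2}.
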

\begin{proof}
First consider the contribution of the first term in \eqref{new28}:
\begin{align*}
	\frac{i}{2}M_{11}\int_0^\infty e^{it\lambda^2} \lambda \chi(\lambda)\, d\lambda &=-\frac{1}{4t}M_{11}+O(t^{-2}),
\end{align*}
where the equality follows from Lemma~\ref{lem:ibp}.

The contribution of the second  summand  in \eqref{new28} can be handled using the bound
\begin{align}\label{log2t}
	\int_0^\infty e^{it\lambda^2} \lambda \frac{\chi(\lambda)}{(\log(\lambda)+c_1)^2+c_2^2}\, d\lambda
	 =O(t^{-1}(\log t)^{-2}), \qquad t>2,
\end{align}
which is essentially Lemma~4.5 in \cite{EG2} and it is proved by using Lemma~\ref{lem:ibp}.

The contribution of the third (similarly the fourth)  summand   in \eqref{new28} can also be handled using \eqref{log2t} along with the bound 
\begin{multline}\label{g0g0}
 \big|\mathcal G_0 v_1Sv_2\mathcal G_0 (x,y)\big| \\ \leq \big\||S|\big\|_{L^2\to L^2} \big\|  \mathcal G_0(x,x_1) v_1(x_1)  \big\|_{L^2_{x_1}} 
 \big\|  \mathcal G_0(y,y_1) v_2(y_1)  \big\|_{L^2_{y_1}} \les \sqrt{w(x)w(y)}.
\end{multline}
The last inequality follows from the absolute boundedness of $S$, the bound  
\begin{align}\label{G0 bound}
	|\mathcal G_0(x,x_1)|\les 1+|\log|x-x_1||\les \sqrt{w(x)}+ k(x,x_1), 
\end{align}
where $k(x,x_1)=1+\log^-|x-x_1|+\log^+|x_1|$, 
and
$$
 \big\|\big(\sqrt{w(x)}+ k(x,x_1)\big) \la x_1\ra^{-3/2} \big\|_{L^2_{x_1}}\les \sqrt{w(x)}.
$$

We now consider the error term, $E_2^{\pm}(\lambda)$. 
Note that 
$$
\Big|\frac{g^\pm(\lambda)}{h^\pm(\lambda)}\Big|=\Big|c_1-\frac{c_2}{h^\pm(\lambda)}\Big|\les 1,\,\,\,\,\,\Big|\partial_\lambda^k \frac{g^\pm(\lambda)}{h^\pm(\lambda)}\Big|\les \frac{1}{\lambda^k}, \,\,\,k=1,2,3,...
$$
Using this, the absolute boundedness of $S$, the decay bounds $|a(x)|+|b(x)|\les \la x\ra^{-\frac32-\alpha-}$,  the bound \eqref{G0 bound},
and the bounds in Lemma~\ref{R0 exp cor} and Corollary~\ref{R0 exp cor2} as in the proof of \eqref{g0g0}, we obtain (for $0<\lambda<\eta\les \lambda<\lambda_1$)
\begin{align*}
	&|\partial_\lambda (\chi(\lambda)E_2^{\pm}(\lambda))(x,y)| \les  \chi(\lambda)
	 (\la x\ra   \la y\ra)^{\frac12}\lambda^{-\frac{1}{2}},\\
	&\big|\big(\partial_\lambda (\chi(\eta)E_2^{\pm}(\eta))-\partial_\lambda (\chi(\lambda)E_2^{\pm}(\lambda))\big)(x,y)\big|
	\les \chi(\lambda)   (\la x\ra   \la y\ra)^{\frac{1}{2}+\alpha}
	\lambda^{-\frac{1}{2}-\alpha}(\eta-\lambda)^\alpha.
\end{align*}
Therefore the contribution of the error term is controlled by using  Lemma~\ref{lem:ibp2} as in Lemma~4.6 of \cite{EG2}.
\end{proof}

Now we consider the contribution of the term $QD_0Q$  in \eqref{resolve} to \eqref{stone2}.
\begin{prop}\label{QDQcontr} Let $0<\alpha<1/4$. If $|a(x)|+|b(x)|\les \la x\ra^{-\f32-\alpha-}$, then
we have
$$\int_0^\infty e^{it\lambda^2}\lambda \chi(\lambda)  \big[ \mathcal R_0^{+} v_1QD_0Qv_2\mathcal R_0^{+} 
	 - \mathcal R_0^{-} v_1QD_0Qv_2\mathcal R_0^{-}  \big](x,y)\,
d\lambda 
= O\Big( \f{\la x\ra^{\f12+\alpha+} \la y\ra^{\f12+\alpha+}}{t^{1+\alpha} } \Big).
$$\end{prop}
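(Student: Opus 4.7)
The key idea is that the orthogonality property \eqref{Qorthogonality}, $M_{11}v_1 Q = Qv_2 M_{11}=0$, kills the singular $g^\pm(\lambda)$ part of the free resolvent. Substituting the expansion from Lemma~\ref{R0 exp cor}, $\mathcal R_0^\pm(\mu+\lambda^2) = g^\pm(\lambda)M_{11}+\mathcal G_0+E_0^\pm(\lambda)$, into both factors of $\mathcal R_0^\pm v_1 QD_0Qv_2\mathcal R_0^\pm$, every term carrying $g^\pm M_{11}$ vanishes, leaving $(\mathcal G_0+E_0^\pm)v_1 QD_0Q v_2(\mathcal G_0+E_0^\pm)$. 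The $\lambda$-independent summand $\mathcal G_0 v_1 QD_0Q v_2\mathcal G_0$ then cancels in the difference $(+)-(-)$, and what remains is
$$
\mathcal E(\lambda) := (E_0^+-E_0^-)v_1QD_0Qv_2\mathcal G_0+\mathcal G_0 v_1 QD_0Q v_2(E_0^+-E_0^-)+E_0^+ v_1 QD_0Q v_2 E_0^+-E_0^- v_1 QD_0Q v_2 E_0^-.
$$
Since $|E_0^\pm(\lambda)|\les \lambda^{1/2}\la x-y\ra^{1/2}$, one has $\mathcal E(0)=0$, so Lemma~\ref{lem:ibp2} applies (with $\chi\mathcal E$ in place of $\mathcal E$).

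The aim is then to establish, for $0<\lambda<\eta\les\lambda<\lambda_1$, the pointwise bounds
$$
|\partial_\lambda(\chi\mathcal E)(x,y)| \les \lambda^{-1/2}\la x\ra^{1/2+}\la y\ra^{1/2+},
$$
$$
|\partial_\lambda(\chi\mathcal E)(\eta)(x,y)-\partial_\lambda(\chi\mathcal E)(\lambda)(x,y)| \les \lambda^{-1/2-\alpha}(\eta-\lambda)^\alpha\la x\ra^{1/2+\alpha+}\la y\ra^{1/2+\alpha+}.
$$
These follow term by term, differentiating inside the integrals and using $|\partial_\lambda E_0^\pm|\les\lambda^{-1/2}\la x-y\ra^{1/2}$ from Lemma~\ref{R0 exp cor}, the H\"older bound from Corollary~\ref{R0 exp cor2}, the size bound $|\mathcal G_0(x,x_1)|\les 1+|\log|x-x_1||$, the decay $|v_j|\les\la\cdot\ra^{-3/2-\alpha-}$, the elementary estimate $\la x-x_1\ra^s\les\la x\ra^s\la x_1\ra^s$, the absolute boundedness of $QD_0Q$ on $L^2\times L^2$, and Cauchy--Schwarz in the intermediate variables---exactly the same machinery as in the proof of Proposition~\ref{Scontr}. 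The quadratic-in-$E_0^\pm$ summand is even better: the undifferentiated $E_0^\pm$ contributes $\lambda^{1/2}$, which absorbs the $\lambda^{-1/2}$ from the differentiated factor.

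Feeding these estimates into Lemma~\ref{lem:ibp2}, the first integral yields
$$
\frac{\la x\ra^{1/2+}\la y\ra^{1/2+}}{t}\int_0^{\lambda_1}\frac{\lambda^{-1/2}}{1+\lambda^2 t}\,d\lambda\les \frac{\la x\ra^{1/2+}\la y\ra^{1/2+}}{t^{5/4}},
$$
which is stronger than claimed. For the second integral, with $\eta=\lambda\sqrt{1+\pi t^{-1}\lambda^{-2}}$ so that $\eta-\lambda\asymp (t\lambda)^{-1}$ on $[t^{-1/2},\lambda_1]$, one gets
$$
\frac{\la x\ra^{1/2+\alpha+}\la y\ra^{1/2+\alpha+}}{t^{1+\alpha}}\int_{t^{-1/2}}^{\lambda_1}\lambda^{-1/2-2\alpha}\,d\lambda\les \frac{\la x\ra^{1/2+\alpha+}\la y\ra^{1/2+\alpha+}}{t^{1+\alpha}},
$$
with the $\lambda$-integral finite precisely because $\alpha<1/4$. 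The main obstacle is keeping the logarithmic singularities of $\mathcal G_0$ (both $\log^-$ near the diagonal and $\log^+$ at infinity) from spoiling the weight exponent on its side of the cross terms; the standard splitting $|\log|x-x_1||\les \log^-|x-x_1|+\log^+|x|+\log^+|x_1|$ against the $L^2$ decay of $v_2$ ensures only a harmless $\la x\ra^{0+}$ is picked up, which is why no $\sqrt{w(x)w(y)}/(t\log^2 t)$ term appears in this contribution (in contrast to Proposition~\ref{Scontr}).
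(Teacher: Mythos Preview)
Your proof is correct and follows exactly the paper's approach: the paper uses the orthogonality \eqref{Qorthogonality} and the expansion of Lemma~\ref{R0 exp cor} to reduce the integrand to the same error-type expression (called $E_3$ there), and then observes that since $QD_0Q$ is absolutely bounded, $E_3$ satisfies the same pointwise derivative and H\"older bounds as the error term $E_2^\pm$ in Proposition~\ref{Scontr}, whence the result follows from Lemma~\ref{lem:ibp2} as in Lemma~4.6 of \cite{EG2}. You have simply spelled out those bounds and the resulting $\lambda$-integrals in more detail than the paper does.
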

\begin{proof}
Using Lemma~\ref{R0 exp cor} and \eqref{Qorthogonality} we have
\begin{multline}\label{QDQcontribution}
	  \mathcal R_0^{+} v_1QD_0Qv_2\mathcal R_0^{+} 
	 -\mathcal R_0^{-} v_1QD_0Qv_2\mathcal R_0^{-}  
	 =  \mathcal G_0 v_1QD_0Qv_2 (E_0^+ -E_0^- ) \\ +(E_0^+ -E_0^- )  v_1QD_0Qv_2 \mathcal G_0
+E_0^+  v_1QD_0Qv_2E_0^+ -E_0^-  v_1QD_0Qv_2E_0^- =:E_3.
\end{multline}
Since $QD_0Q$ is absolutely bounded, $E_3$ satisfies the same bounds that we obtained for the error term $E_2$ above.  
\end{proof}

Finally the contribution of $E^{\pm}(\lambda)$ in \eqref{resolve} to \eqref{stone2} can be handled exactly as in Proposition~4.9 of \cite{EG2}:
\begin{prop}\label{Econtr} Let $0<\alpha<1/4$. If $|a(x)|+|b(x)|\les \la x\ra^{-\f32-\alpha-}$, then
we have
\begin{multline*}\int_0^\infty e^{it\lambda^2}\lambda \chi(\lambda)  \big[ \mathcal R_0^{+}(\lambda^2)v_1E^+(\lambda)v_2\mathcal R_0^{+}(\lambda^2)
	 - \mathcal R_0^{-}(\lambda^2)v_1E^-(\lambda)v_2\mathcal R_0^{-}(\lambda^2) \big](x,y)\,
d\lambda\\
= O\Big( \f{\la x\ra^{\f12+\alpha+} \la y\ra^{\f12+\alpha+}}{t^{1+\alpha} } \Big).
\end{multline*}
\end{prop}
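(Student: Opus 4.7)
\textbf{Proof plan for Proposition~\ref{Econtr}.} The strategy is to apply Lemma~\ref{lem:ibp2} to the kernel-valued function
\[
\mathcal F(\lambda) := \chi(\lambda)\big[\mathcal R_0^+(\mu+\lambda^2) v_1 E^+(\lambda) v_2 \mathcal R_0^+(\mu+\lambda^2) - \mathcal R_0^-(\mu+\lambda^2) v_1 E^-(\lambda) v_2 \mathcal R_0^-(\mu+\lambda^2)\big](x,y).
\]
The first task is to verify $\mathcal F(0)=0$, which permits the use of Lemma~\ref{lem:ibp2} rather than Lemma~\ref{lem:ibp}. Invoking the decomposition $\mathcal R_0^{\pm}=g^{\pm}(\lambda)M_{11}+\mathcal G_0+E_0^{\pm}(\lambda)$ from Lemma~\ref{R0 exp cor}, each summand in $\mathcal R_0^{\pm}$ grows at worst like $|\log\lambda|$ as $\lambda\to 0^+$, while $|E^{\pm}(\lambda)|\les \lambda^{1/2}$ by Lemma~\ref{lem:Minverse}. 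The product therefore vanishes at $\lambda=0$.

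Next, I would establish pointwise kernel bounds for $\mathcal F'(\lambda)$ and its H\"older-$\alpha$ modulus. Expanding $\mathcal R_0^{\pm}$ as above, distributing the product rule, and applying Cauchy--Schwarz in the $L^2_{x_1}$ and $L^2_{y_1}$ integrations pairs the Hilbert--Schmidt bounds of Lemma~\ref{lem:Minverse} for $E^{\pm}(\lambda)$ (respectively $\partial_\lambda E^{\pm}(\lambda)$ or its Lipschitz difference) against $L^2$-norms of $v_i\mathcal R_0^{\pm}(\cdot)$. The decay $|v_i(x)|\les \la x\ra^{-3/2-\alpha-}$ absorbs the $\la x-x_1\ra^{1/2}$ growth coming from $E_0^{\pm}$ while extracting a factor $\la x\ra^{1/2+\alpha+}\la y\ra^{1/2+\alpha+}$. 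The bookkeeping mirrors the treatment of $E_2^{\pm}$ and $E_3$ in Propositions~\ref{Scontr} and \ref{QDQcontr} and yields
\[
|\mathcal F'(\lambda)(x,y)|\les \chi(\lambda)\,\lambda^{-1/2}(1+|\log\lambda|)^2 \la x\ra^{1/2+\alpha+}\la y\ra^{1/2+\alpha+},
\]
together with a H\"older estimate with exponent $\alpha$ and weight $\lambda^{-1/2-\alpha}$ (again with the same spatial factor).

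Plugging these into Lemma~\ref{lem:ibp2}, splitting the first integral at $\lambda=t^{-1/2}$ gives a contribution of order $t^{-5/4}(\log t)^2$, which is dominated by $t^{-1-\alpha}$ for $\alpha<1/4$ and $t$ large. For the second integral, the change of variable $\eta=\lambda\sqrt{1+\pi t^{-1}\lambda^{-2}}$ satisfies $\eta-\lambda\les t^{-1}\lambda^{-1}$ on $\lambda>t^{-1/2}$, so the H\"older estimate contributes
\[
\tfrac1t\cdot t^{-\alpha}\!\int_{t^{-1/2}}^{\lambda_1}\lambda^{-\frac12-2\alpha}(1+|\log\lambda|)^2\,d\lambda \les t^{-1-\alpha},
\]
where the integral converges at $\lambda=0$ precisely because $\alpha<1/4$. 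Summing, one obtains the claimed bound $O\big(t^{-1-\alpha}\la x\ra^{1/2+\alpha+}\la y\ra^{1/2+\alpha+}\big)$.

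The hardest step to manage carefully is the simultaneous presence of the logarithmic singularity from the free resolvent at threshold, the $\lambda^{-1/2}$ blow-up of $\partial_\lambda E^{\pm}$, and the requirement that the H\"older-modulus integral be finite at $\lambda=0$ after the $t^{-\alpha}\lambda^{-\alpha}$ weight is inserted; the constraint $\alpha<1/4$ is exactly what balances the exponent $-1/2-2\alpha$ to remain above $-1$. No genuinely new phenomena appear beyond the scalar case treated in \cite[Proposition~4.9]{EG2}; the matrix structure has already been fully accounted for by the decomposition in Lemma~\ref{lem:Minverse}.
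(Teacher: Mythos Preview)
Your proposal is correct and follows the same route as the paper, which simply defers to \cite[Proposition~4.9]{EG2}; you have spelled out the mechanics of that argument in the matrix setting, using Lemma~\ref{lem:ibp2} together with the Hilbert--Schmidt bounds of Lemma~\ref{lem:Minverse} and the free-resolvent expansion of Lemma~\ref{R0 exp cor}, exactly as intended.
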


This finishes the proof of Theorem~\ref{lowprop}.

\section{Proof of Theorem~\ref{thm:weighted} for energies separated from the thresholds}

In this section we complete the proof of Theorem~\ref{thm:weighted} by proving
\begin{theorem} \label{thm:mainineq high}
	Under the assumptions of Theorem~\ref{thm:weighted}, we have
	for $t>2$
	\be\label{weighteddecayhi}
		\sup_{L\geq1}\bigg| \int_0^\infty e^{it\lambda^2}\lambda \widetilde{\chi}(\lambda) \chi(\lambda/L)
		 [\mathcal R_V^+(\mu+\lambda^2)-\mathcal R_V^-(\mu+\lambda^2)](x,y) d\lambda \bigg|
		 \les\f{\la x\ra^{\f32}\la y\ra^{\f32}}{t^{\f32}}
	\ee
where $\widetilde\chi=1-\chi$.
\end{theorem}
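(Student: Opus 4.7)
My approach is to adapt the scalar high-energy argument from \cite[Section~6]{EG2} to the matrix case, based on the Born series expansion
\begin{align*}
\mathcal R_V^\pm(\mu+\lambda^2)=\sum_{k=0}^{N-1}(-1)^k\,\big(\mathcal R_0^\pm V\big)^k\mathcal R_0^\pm+(-1)^N\big(\mathcal R_0^\pm V\big)^N\mathcal R_V^\pm,
\end{align*}
truncated at some $N$ chosen below.  For each term in the expansion I apply two integrations by parts in $\lambda$ against $\partial_\lambda e^{it\lambda^2}=2it\lambda\, e^{it\lambda^2}$ to extract the $t^{-3/2}$ decay; the outer cutoff $\chi(\lambda/L)\widetilde\chi(\lambda)$ is smooth and of compact support in $\lambda$, so all boundary terms vanish.

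For the leading ($k=0$) term, \eqref{r0low2} reduces the estimate to
\begin{align*}
\int_0^\infty e^{it\lambda^2}\,\lambda\,\widetilde\chi(\lambda)\chi(\lambda/L)\,J_0(\lambda|x-y|)\,d\lambda.
\end{align*}
Writing $J_0(z)=\mathrm{Re}\big(e^{iz}\omega(z)\big)$ via \eqref{JYasymp2}, with $|\omega^{(k)}(z)|\les (1+|z|)^{-1/2-k}$, this is an oscillatory integral with combined phase $t\lambda^2\pm\lambda|x-y|$.  The $+$ branch is non-stationary on $\mathrm{supp}\,\widetilde\chi$, so two integrations by parts yield $t^{-3/2}$ decay.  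The $-$ branch has stationary point $\lambda_\ast=|x-y|/(2t)$, handled by integration by parts when $|x-y|\ll t$ and by stationary phase around $\lambda_\ast$ when $|x-y|\gtrsim t$; both cases give the same rate, with amplitude growth at most $\la x-y\ra^{3/2}\le\la x\ra^{3/2}\la y\ra^{3/2}$, which arises because each $\partial_\lambda$ acting on $e^{i\lambda|x-y|}$ produces a factor of $|x-y|$ while the Hankel amplitude $\omega$ provides only a $\lambda^{-1/2}$ gain.

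For $1\le k\le N-1$, each intermediate term involves $k$ copies of the potential $V$ and $k+1$ free resolvents connected by $k$ intermediate convolution variables $x_1,\dots,x_k$; after two integrations by parts in $\lambda$, the derivatives distribute over the factors, each derivative on a free resolvent $\mathcal R_0^\pm(x_{i-1},x_i)$ producing a factor of $|x_{i-1}-x_i|\le \la x_{i-1}\ra\la x_i\ra$.  I then close via Fubini, using the decay $|V|\les\la x\ra^{-\beta}$ from A3) to absorb all interior weights $\la x_i\ra$ and to generate the final $\la x\ra^{3/2}\la y\ra^{3/2}$ growth from the two outer factors.  For the remainder term $(\mathcal R_0^\pm V)^N\mathcal R_V^\pm$, the inner $\mathcal R_V^\pm$ is controlled via Proposition~\ref{LAP2} and the derivative bounds \eqref{lap} in the weighted space $X_\sigma\to X_{-\sigma}$ with $\sigma>5/2$, which requires $N$ large enough that $N$ powers of $V$-decay produce the weight $\la \cdot\ra^{\sigma}$ needed to enter $X_\sigma$.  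Uniformity in $L$ is automatic, since $\chi^{(j)}(\cdot/L)/L^j$ is supported near $\lambda\sim L$ where the $\lambda^{-1/2}$ decay from LAP absorbs the $O(L)$ length of the integration support.  The main technical obstacle I anticipate is choosing $N$ correctly and verifying the weighted $L^2$ bounds so that two $\lambda$-derivatives of $\mathcal R_V^\pm$ remain controlled after absorbing $N$ factors of $V$; this is precisely where the gap between the assumption $\beta>3$ and the stronger $\beta>5$ that a naive single-iteration argument would force is bridged by the extra Born iterations.
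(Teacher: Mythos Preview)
Your overall strategy---Born series truncation plus limiting absorption for the remainder, with integration by parts to extract the time decay---is the same as the paper's.  There is, however, one genuine gap in the remainder step.

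You write the tail as $(\mathcal R_0^\pm V)^N\mathcal R_V^\pm$, with the perturbed resolvent sitting at the outer $y$-slot, and propose to control it via Proposition~\ref{LAP2} and \eqref{lap}.  But those statements are operator bounds $X_\sigma\to X_{-\sigma}$; they do not give you $\mathcal R_V^\pm(\cdot,y)$ as an element of any weighted $L^2$ space, because $\delta_y\notin X_\sigma$.  The paper avoids this by using the \emph{symmetric} remainder
\[
\mathcal R_0^{\pm}(V\mathcal R_0^{\pm})^M\,V\,\mathcal R_V^{\pm}\,V\,(\mathcal R_0^{\pm}V)^M\mathcal R_0^{\pm},
\]
so that both outermost factors are free resolvents, for which the explicit kernel bound \eqref{R0 wtdL2} places $\mathcal R_0^\pm(\cdot,x)$ and $\mathcal R_0^\pm(\cdot,y)$ in $X_{-\sigma}$; the factors of $V$ then map into $X_{\sigma}$ so that the $X_\sigma\to X_{-\sigma}$ bounds on $\mathcal R_V^\pm$ and its $\lambda$-derivatives apply.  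You can recover this from your expansion by one further application of $\mathcal R_V^\pm=\mathcal R_0^\pm-\mathcal R_V^\pm V\mathcal R_0^\pm$ on the right, but as written the step does not close.

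Two smaller remarks.  For the $k=0$ term the paper does not split $J_0$ into phases and run stationary phase; it simply integrates by parts twice against $e^{it\lambda^2}$, treating $J_0(\lambda|x-y|)$ as amplitude and using $|\partial_\lambda^2 J_0(\lambda r)|\les r^{3/2}\lambda^{-1/2}$, which already yields $t^{-2}\la x\ra^{3/2}\la y\ra^{3/2}$.  For the intermediate Born terms the paper uses the decomposition $R_0=e^{-i\lambda d}\rho_+(\lambda d)+\rho_-(\lambda d)$ together with the mean-value Lemma~\ref{lem:ibp2}, rather than a direct double integration by parts; your route should also work but requires more care in distributing the two derivatives over the chain of resolvents and checking integrability in $\lambda$ uniformly in $L$.
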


We employ the resolvent expansion
\be 
     \mathcal R_V^{\pm}  =\sum_{m=0}^{2M+2} \mathcal R_0^{\pm} 
    (-V\mathcal R_0^{\pm} )^m \label{born series} 
     +\mathcal R_0^{\pm} (V\mathcal R_0^{\pm} )^M V\mathcal R_V^{\pm} 
    V (\mathcal R_0^{\pm} V)^M \mathcal R_0^{\pm}.
\ee
We first note that the contribution of the term $m=0$ can be 
bounded by
$\frac{\la x\ra^{\f32}\la y\ra^{\f32}}{t^2}$ by integrating by parts twice (there are no boundary terms because of the cutoff).  We approach the energies separated from zero differently from the
small energies.  In particular, we won't use Lemma~\ref{R0 exp cor}, but instead employ a component-wise
approach.  Recall that
\begin{align*}
	\mathcal R_0^{\pm}(\mu+\lambda^2)(x,y)=\left[\begin{array}{cc} R_0^{\pm}(\lambda^2)(x,y) & 0\\
	0 & -\frac{i}{4}H_0^+(i\sqrt{2\mu+\lambda^2}|x-y|)\end{array}\right]
\end{align*}

For the case $m>0$ we won't make use of any  cancelation between `$\pm$' terms. Thus, we will only consider $R_0^-$, and drop the `$\pm $' signs.
Using \eqref{R0 def}, \eqref{J0 def}, \eqref{Y0 def}, and \eqref{JYasymp2} we write
\begin{align}\label{freeform}
    R_0(\lambda^2)(x,y)=e^{-i\lambda|x-y|}\rho_+(\lambda|x-y|)+\rho_-(\lambda|x-y|),
\end{align}
where $\rho_+$ and $\rho_-$ are supported on the sets $[1/4,\infty)$ and $[0, 1/2]$, respectively.
Moreover, we have the bounds
\be\label{omega bds}
    \rho_-(y) =\widetilde O(1+|\log y|),\,\,\,\,\,\,\, \rho_+(y) =\widetilde O\big( (1+|y|)^{-1/2}\big)\\
\ee
This controls the top left component of the matrix operator.  The lower right term can be similarly controlled as
\begin{align*}
    H_0^+(i\sqrt{2\mu+\lambda^2})(x,y)=e^{-\sqrt{2\mu+\lambda^2}|x-y|}\rho_+(\sqrt{2\mu+\lambda^2}|x-y|)+\rho_-(\sqrt{2\mu+\lambda^2}|x-y|).
\end{align*}
As such we can write
\begin{multline*}
	\mathcal R_0^{\pm}(\mu+\lambda^2)(x,y)=e^{-i\lambda|x-y|} \left[\begin{array}{cc} \rho_+(\lambda |x-y|) & 0\\
	0 & e^{(i\lambda-\sqrt{2\mu+\lambda^2})|x-y|}\rho_+(\sqrt{2\mu+\lambda^2}|x-y|)\end{array}\right]\\ +
	 \left[\begin{array}{cc} \rho_-(\lambda |x-y|) & 0\\
	0 & \rho_-(\sqrt{2\mu+\lambda^2}|x-y|)\end{array}\right]
\end{multline*}
It is easy to see that
\begin{align*}
e^{(i\lambda-\sqrt{2\mu+\lambda^2})|x-y|}\rho_+(\sqrt{2\mu+\lambda^2}|x-y|)&=\widetilde O\big(\rho_+(\lambda|x-y|)\big),   \\
\rho_-(\sqrt{2\mu+\lambda^2}|x-y|)&=\widetilde O\big(\rho_-(\lambda|x-y|)\big).
\end{align*}
Therefore, we can use the right hand side of \eqref{freeform} for each component of $\mathcal R_0$.
The argument for the high energy now proceeds as in Section~5
of \cite{EG2}.  We provide a sketch of the details for the convenience of the reader.


We first control the contribution of the finite born series in \eqref{born series} for $m>0$.
Note that the contribution of the $m$th term of \eqref{born series} to the integral in \eqref{weighteddecayhi} can be written as a finite sum of integrals of the form
\be
		\int_{\R^{2m}} \int_0^\infty e^{it\lambda^2}\lambda \mathcal E(\lambda)
		\prod_{n=1}^{m} W(x_n) \, d\lambda \, dx_1\dots dx_{m},\label{high born1}
\ee
where $d_j=|x_{j-1}-x_j|$, $J\cup J^*$ is a partition of $\{1,...,m,m+1\}$, and
$$
\mathcal E(\lambda) := \widetilde \chi(\lambda)
		\chi(\lambda/L) e^{- i \lambda \sum_{j\in J}d_j} \prod_{j\in J} \rho_+
		(\lambda d_j)
		\prod_{\ell\in J^*} \rho_-(\lambda d_\ell).
$$
Here, with a slight abuse of notation,  $W(x)$ denotes either $\pm V_1(x)$ or $\pm V_2(x)$ (since we only use the decay assumption and do not rely on cancelations, this shouldn't create any confusion).

To estimate the derivatives of $\mathcal E$, we note that
\begin{align*}
\big|\partial_\lambda^k \big[\rho_+(\lambda d_j)\big]\big|&\les \frac{d_j^k}{(1+\lambda d_j)^{k+1/2}},\,\,\,\,\,\,\,k=0,1,2,...,\\
\big|\partial_\lambda^k \big[\rho_-(\lambda d_j)\big]\big|&\les \f1{\lambda^k},\,\,\,\,k=1,2,...
\end{align*}
Using the monotonicity of $\log^-$ function, we also obtain
$$
\widetilde\chi(\lambda) \big|\rho_-(\lambda d_j)\big|\les \widetilde \chi(\lambda) (1+|\log(\lambda d_j)|) \chi_{\{0<\lambda d_j\leq 1/2\}}
\les \widetilde \chi(\lambda) (1+\log^-(\lambda d_j))\les
1+\log^-(d_j).
$$
It is also easy to see that
$$
\Big|\frac{d^k}{d\lambda^k} \chi(\lambda/L)\Big|\les \lambda^{-k}.
$$
Finally, noting that
 $(\widetilde \chi)^{\prime}$ is supported on the set $\{\lambda\approx 1\}$, we can estimate
\begin{multline} \label{ehiprime}
\big|\partial_\lambda \mathcal E\big| \les \widetilde \chi(\lambda) \Big(\f1\lambda +\sum_{k\in J}\big(d_k + \frac{d_k}{1+\lambda d_k} \big)\Big)
\prod_{j\in J} \frac{1}{(1+\lambda d_j)^{1/2}}
		\prod_{\ell\in J^*} (1+\log^-(d_\ell))\\
\les  \widetilde \chi(\lambda) \Big(\f1\lambda+\sum_{k\in J} \frac{d_k}{(1+\lambda d_k)^{1/2}}  \Big)
		\prod_{\ell\in J^*} (1+\log^-(d_\ell))\\
\les  \widetilde \chi(\lambda) \Big(\lambda^{-1}+\sum_{k\in J} d_k^{\f12} \lambda^{-\f12}  \Big)
		\prod_{\ell\in J^*} (1+\log^-(d_\ell))
\les \widetilde \chi(\lambda) \lambda^{-\f12} \prod_{k=0}^{m+1} \la x_k\ra^{\f12}
		\prod_{\ell=1}^{m+1} (1+\log^-(d_\ell)).
\end{multline}
We also have
\begin{multline} \label{ehiprime2}
\big|\partial_\lambda^2 \mathcal E\big| \les \widetilde \chi(\lambda) \Big(\f1{\lambda^2}+\sum_{k\in J}\big(d_k^2 + \frac{d_k^2}{(1+\lambda d_k)^2}\big)   \Big)
\prod_{j\in J} \frac{1}{(1+\lambda d_j)^{1/2}}
		\prod_{\ell\in J^*} (1+\log^-(d_\ell))\\
\les  \widetilde \chi(\lambda) \Big(\lambda^{-2}+\sum_{k\in J} d_k^{\f32} \lambda^{-\f12}   \Big)
		\prod_{\ell\in J^*} (1+\log^-(d_\ell))\les \widetilde \chi(\lambda) \lambda^{-\f12} \prod_{k=0}^{m+1} \la x_k\ra^{\f32}
		\prod_{\ell=1}^{m+1} (1+\log^-(d_\ell)).
\end{multline}
Using Lemma~\ref{lem:ibp2} (and taking the support condition of $\widetilde \chi$ into account), we can bound the $\lambda$ integral in \eqref{high born1} by
\be\label{hilip}
\f1{t^2}\int_0^{\infty}\frac{|\mathcal E^\prime(\lambda)|}{ \lambda^2 } d\lambda
+\f1{t}\int_{0}^\infty \big|  \mathcal E^\prime(\lambda \sqrt{1+ \pi t^{-1}\lambda^{-2}} )-\mathcal E^\prime(\lambda)  \big| d\lambda,
\ee
Using \eqref{ehiprime}, we can bound the first integral in \eqref{hilip} by
\be\label{hilip1}
 \prod_{k=0}^{m+1} \la x_k \ra^{\f12} \prod_{\ell=1}^{m+1} (1+\log^-(d_\ell))\int_{0}^\infty \widetilde \chi(\lambda) \lambda^{-5/2} d\lambda\les \prod_{k=0}^{m+1} \la x_k \ra^{\f12} \prod_{\ell=1}^{m+1} (1+\log^-(d_\ell)).
\ee
To estimate the second integral in \eqref{hilip} first note that
\be\label{bminusa}
\lambda \sqrt{1+ \pi t^{-1}\lambda^{-2}}  - \lambda \approx \f1{t\lambda}.
\ee
Next  using \eqref{bminusa}, \eqref{ehiprime} and \eqref{ehiprime2}, we have (for any $0\leq \alpha \leq 1 $)
\begin{multline} \label{ehidiff}
 \big|  \mathcal E^\prime(\lambda \sqrt{1+ \pi t^{-1}\lambda^{-2}} )-\mathcal E^\prime(\lambda)  \big| \\ \les
 \widetilde\chi(2\lambda) \lambda^{-\f12} \prod_{k=0}^{m+1} \la x_k\ra^{\f12}
		\prod_{\ell=1}^{m+1} (1+\log^-(d_\ell)) \,
 \min\Big(1, \f1{t\lambda} \prod_{k=0}^{m+1} \la x_k\ra\Big)\\
\les t^{-\alpha} \widetilde\chi(2\lambda) \lambda^{-\f12-\alpha} \prod_{k=0}^{m+1} \la x_k\ra^{\f12+\alpha}
		\prod_{\ell=1}^{m+1} (1+\log^-(d_\ell)).
\end{multline}
Using this bound for $\alpha\in(1/2,1]$, we bound the second integral in \eqref{hilip} by
\begin{multline} \label{hilip2}
t^{-\alpha} \prod_{k=0}^{m+1} \la x_k\ra^{\f12+\alpha}
		\prod_{\ell=1}^{m+1} (1+\log^-(d_\ell)) \int_0^\infty \widetilde\chi(2\lambda) \lambda^{-\f12-\alpha} \les  \\ \les t^{-\alpha} \prod_{k=0}^{m+1} \la x_k\ra^{\f12+\alpha}
		\prod_{\ell=1}^{m+1} (1+\log^-(d_\ell)).
\end{multline}
Combining \eqref{hilip1} and \eqref{hilip2}, we obtain
$$
|\eqref{hilip}|\les  t^{-1-\alpha} \prod_{k=0}^{m+1} \la x_k\ra^{\f12+\alpha}
		\prod_{\ell=1}^{m+1} (1+\log^-(d_\ell))
$$
Using this (with $\f12<\alpha<2\beta-\f52$) in \eqref{high born1}, we obtain
\begin{multline*}
|\eqref{high born1}|\les  t^{-1-\alpha}
		\int_{\R^{2m}}  \prod_{k=0}^{m+1} \la x_k\ra^{\f12+\alpha}
		\prod_{\ell=1}^{m+1} (1+\log^-(d_\ell))
		\prod_{n=1}^{m} |V(x_n)|   \, dx_1\dots dx_{m}\\ \les \f{\la x_0\ra^{\f12+\alpha}\la x_{m+1}\ra^{\f12+\alpha}}{t^{\f32}}.
\end{multline*}

To control the contribution of the remainder term in \eqref{born series}, we will employ the limiting absorption principle, \eqref{free lap} and \eqref{lap}, both for $\mathcal R_0$ and $\mathcal R_V$.

Using the representation \eqref{omega bds}, and the discussion following it,
we note the following bounds hold   on $\lambda>\lambda_1>0$,
\begin{multline*}
	|\partial_\lambda^k \mathcal R_0^{\pm}(\mu+\lambda^2)(x,y)|\les \la x-y\ra^k \left\{\begin{array}{ll}
	|\log(\lambda|x-y|)| & 0<\lambda|x-y|<\frac{1}{2}\\
	(\lambda|x-y|)^{-\frac{1}{2}} & \lambda|x-y|\gtrsim 1\end{array}\right.\\
	\les \lambda^{-\frac{1}{2}}|x-y|^{-\frac{1}{2}} \la x-y\ra^k.
\end{multline*}
Thus, for $\sigma>\f12+k$,
\begin{align}\label{R0 wtdL2}
	\|\partial_\lambda^k \mathcal R_0^{\pm}(\mu+\lambda^2)(x,\cdot) \|_{X_{-\sigma}}
	\les\lambda^{-\f12} \Big[\int_{\R^2} \frac{\la x-y\ra^{2k}|x-y|^{-1}}{\la y\ra^{2\sigma}}\, dy\Big]^{\f12}
	\les \lambda^{-\f12} \la x\ra^{\max(0,k-1/2)}.
\end{align}
Once again, we estimate the  '$\pm$' terms separately and omit the `$\pm$' signs.

	We  write the contribution of  the remainder term in \eqref{born series}   to \eqref{weighteddecayhi} as
	\begin{align}\label{I def}
		 \int_0^\infty e^{it\lambda^2} \lambda \, \mathcal E(\lambda)(x,y)\, d\lambda,
	\end{align}
where
\begin{multline}\label{tailcont}
\mathcal E(\lambda)(x,y)    = \widetilde{\chi}(\lambda) \chi(\lambda/L) \times \\	\big\la V\mathcal R_V^{\pm}(\mu+\lambda^2) V (\mathcal R_0^{\pm}(\mu+\lambda^2)V)^M \mathcal R_0^{\pm}(\mu+\lambda^2)(\cdot,x), (\mathcal R_0^{\pm}(\mu+\lambda^2)V)^M \mathcal R_0^{\pm}(\mu+\lambda^2)(\cdot,y) \big\ra.
\end{multline}
Using \eqref{lap}, \eqref{free lap}, and \eqref{R0 wtdL2} (provided that $M\geq 2$) we see that
	\begin{align}\label{a deriv bds}
		\big|\partial_\lambda^k \mathcal E(\lambda)(x,y)\big|
		&\les \widetilde{\chi}(\lambda) \chi(\lambda/L) \la \lambda \ra^{-2-}\la x\ra^{\f32}\la y\ra^{\f32}, \qquad k=0,1,2.
	\end{align}
	This requires that $|V(x)|\les \la x\ra^{-3-}$. One can see that the requirement on the decay rate of the potential arises when, for instance,
	both $\lambda$ derivatives act on one resolvent, this twice differentiated resolvent operator maps
	$X_{\frac{5}{2}+}\to X_{-\frac{5}{2}-}$ by \eqref{lap},  or is in $X_{-\frac{5}{2}-}$ by \eqref{R0 wtdL2}.
	The potential then needs to map
	$X_{-\frac{5}{2}-} \to  X_{ \frac{1}{2}+}$  for the next application of the limiting absorption principle.
	This is satisfied if $|V(x)|\les \la x\ra^{-3-}$.
	
	The required bound now follows by integrating by parts twice:
	\begin{align*}
		|\eqref{I def}|\les |t|^{-2} \int_0^\infty \bigg|\partial_\lambda
		\bigg(\frac{\partial_\lambda \,\mathcal E(\lambda)(x,y)}{\lambda}\bigg)\bigg|\, d\lambda
		\les |t|^{-2} \la x\ra^{\frac{3}{2}}\la y\ra^{\frac{3}{2}}.
	\end{align*}

\section{Proof of Theorem~\ref{thm:nonweighted} for energies close to $\mu$}

In this section we will prove the following
\begin{theorem}\label{lowprop2} Under the conditions of Theorem~\ref{thm:nonweighted}, we have
\be\label{stone3}
\sup_{x,y\in\R^2}\Big|\int_0^\infty e^{it\lambda^2}\lambda \chi(\lambda)  [\mathcal R_V^+(\mu+\lambda^2)-\mathcal R_V^-(\mu+\lambda^2)](x,y)\,
d\lambda\Big| \les \frac{1}{t},\,\,\,\,\,\,t>0
\ee
\end{theorem}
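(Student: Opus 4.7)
The plan is to follow the decomposition of Section~\ref{sec:lowweighted}, applying the resolvent identity \eqref{resolve} to write
\[
\int_0^\infty e^{it\lambda^2}\lambda\chi(\lambda)[\mathcal R_V^+-\mathcal R_V^-](x,y)\,d\lambda = I_F + I_S + I_Q + I_E,
\]
where $I_F$ is the free contribution from $[\mathcal R_0^+-\mathcal R_0^-]$ and $I_S,I_Q,I_E$ correspond to the three summands $h^{\pm,-1}S$, $QD_0Q$, $E^\pm$ of $M^{\pm,-1}$ in Lemma~\ref{lem:Minverse}. The aim is to show each of these contributes $O(1/t)$ uniformly in $x,y$, in contrast to the weighted growth allowed in Section~\ref{sec:lowweighted}. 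By \eqref{r0low2}, $I_F$ is a spectrally truncated free 2D Schr\"odinger kernel whose uniform $O(1/t)$ bound is standard: separate $|x-y|\les\sqrt t$ (treated by Lemma~\ref{lem:ibp} and the Bessel bounds $|J_0(z)|,|zJ_0'(z)|\les(1+z)^{-1/2}$) from $|x-y|\gg\sqrt t$ (treated by stationary phase using the Hankel asymptotics \eqref{JYasymp2}).

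The common device for the three correction pieces is to verify that the integrand $\mathcal E(\lambda)(x,y)$ vanishes at $\lambda=0$, so that Lemma~\ref{lem:ibp2} applies. For $I_S$, combining Lemma~\ref{R0 exp cor} with $M_{11}v_1Sv_2M_{11}=-\|a^2+b^2\|_{L^1}M_{11}$ and the formula for $h^\pm$ produces the decomposition in \eqref{new28}: the leading $\tfrac i2 M_{11}$ constant there cancels the $\lambda\to 0$ limit of the free kernel $\tfrac i2 J_0(\lambda|x-y|)M_{11}$ once $I_F$ is added back in, leaving an integrand proportional to $J_0(\lambda|x-y|)-1$ (estimated as in $I_F$) together with sub-leading terms carrying either the damping $1/h^\pm=O(|\log\lambda|^{-1})$ or the error $E_0^\pm$. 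For $I_Q$ the orthogonality $Qv_2M_{11}=M_{11}v_1Q=0$ of \eqref{Qorthogonality} annihilates the $g^\pm M_{11}$ part of $\mathcal R_0^\pm$, so only $[\mathcal G_0+E_0^\pm]$-terms survive and their $\lambda=0$ value is $\pm$-symmetric, hence disappears in the difference. For $I_E$ the Hilbert-Schmidt bound $\|E^\pm(\lambda)\|_{HS}\les\lambda^{1/2}$ from Lemma~\ref{lem:Minverse} directly gives $\mathcal E(0)=0$.

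The principal obstacle is to replace the derivative bounds of Section~\ref{sec:lowweighted} (which carried factors such as $(\la x\ra\la y\ra)^{1/2}$ for $|\partial_\lambda\mathcal E|$) by genuinely uniform ones, since the estimate $|\mathcal G_0 v_1Sv_2\mathcal G_0(x,y)|\les\sqrt{w(x)w(y)}$ used there is not uniform. The remedy is to split each kernel $\mathcal R_0^\pm(\mu+\lambda^2)(x,z)$ according to $\lambda|x-z|\les 1$ versus $\lambda|x-z|\gtrsim 1$: on the first region the Lemma~\ref{R0 exp cor} expansion combined with the factorization $S=(I-QD_0QT)P(I-TQD_0Q)$ and the absolute boundedness of $QD_0Q$ and $S$ restricts the logarithmic singularities to convolutions against the rapidly decaying $v_1,v_2$; on the second region the oscillatory representation $\mathcal R_0^\pm=e^{\pm i\lambda|x-z|}\widetilde O((1+\lambda|x-z|)^{-1/2})$ from \eqref{R0 def} and \eqref{JYasymp2} enables a stationary-phase-in-$\lambda$ argument that produces $O(1/t)$ with $x,y$-independent amplitude. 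Balancing the two regions is the main technical step; once done, applying Lemma~\ref{lem:ibp2} to each of $I_S,I_Q,I_E$ yields the claimed uniform $O(1/t)$ bound and completes the proof. For $0<t\le 2$ the statement is trivial since $1/t\gtrsim 1$ and the integrand is uniformly integrable.
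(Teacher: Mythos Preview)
Your decomposition into $I_F,I_S,I_Q,I_E$ matches the paper's, but the technical plan you outline has a genuine gap at exactly the point you flag as the ``principal obstacle.'' The difficulty is not merely one of bookkeeping: the expansion $\mathcal R_0^\pm = g^\pm M_{11}+\mathcal G_0+E_0^\pm$ from Lemma~\ref{R0 exp cor} introduces the $\lambda$-independent kernel $\mathcal G_0(x,z)$ whose logarithmic growth is in the \emph{outer} variables $x,y$, not in the inner integration variables that get killed by $v_1,v_2$. Concretely, the third summand of \eqref{new28} is
\[
\frac{ia_2}{(\log\lambda+b_2)^2+c_2^2}\,\mathcal G_0 v_1 S v_2 \mathcal G_0(x,y),
\]
and since the second factor is $\lambda$-independent with $|\mathcal G_0 v_1 S v_2 \mathcal G_0(x,y)|\sim\sqrt{w(x)w(y)}$ (cf.\ \eqref{g0g0}), no bound on the $\lambda$-integral alone can produce a uniform $O(1/t)$. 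Your proposed split $\lambda|x-z|\les 1$ versus $\lambda|x-z|\gtrsim 1$ does not resolve this, because $\mathcal G_0$ has no $\lambda$-dependence to split on. Equally problematic is the plan to conclude via Lemma~\ref{lem:ibp2}: that lemma is an integration-by-parts identity for the fixed phase $e^{it\lambda^2}$ and cannot absorb the oscillatory factor $e^{\pm i\lambda|x-z|}$ coming from the large-argument Hankel asymptotics. The critical point of the combined phase $t\lambda^2\pm\lambda|x-z|$ moves with $x$, so one needs a genuine stationary-phase lemma (Lemma~\ref{stat phase} below, taken from \cite{Sc2}), not Lemma~\ref{lem:ibp2}.

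The paper's proof avoids both issues by \emph{not} expanding $\mathcal R_0^\pm$ via Lemma~\ref{R0 exp cor}. Instead it uses the component decomposition $\mathcal R_0^\pm = R_0^\pm M_{11}+R_2 M_{22}$ and expands each of $I_S,I_Q$ as a $2\times 2$ block sum. The $R_0^\pm$--$R_0^\pm$ blocks (e.g.\ $R_0^+M_{11}v_1QD_0Qv_2M_{11}R_0^+$) are identical in structure to the scalar case and are handled by citing \cite[Lemmas~16--18]{Sc2}, whose proofs already incorporate the moving-critical-point stationary phase. The $R_2$--$R_2$ blocks either cancel in the $\pm$ difference or reduce to a single integration by parts using the uniform bounds \eqref{R2 bounds}. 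The mixed $R_0^\pm$--$R_2$ blocks are new; for these the paper writes $R_0^+-R_0^-=\tfrac i2 J_0$, splits $J_0$ into its small- and large-argument pieces \eqref{r0+-r0-}, and applies Lemma~\ref{stat phase} directly (Propositions~\ref{nonweightQDQ}, \ref{nonweightS1}, \ref{nonweightS2}). The upshot is that the unweighted estimate never passes through the $\mathcal G_0$ expansion at all.
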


As in Section~\ref{sec:lowweighted}, we will use  lemmas from the proof for the scalar case given in   \cite{Sc2}. 

Using \eqref{resolve}, we write
\begin{multline}\label{resolve2}
\mathcal R_V^+-\mathcal R_V^- = \\ \mathcal R_0^+-\mathcal R_0^--\mathcal R_0^+ v_1 [(h^+)^{-1}S+QD_0Q+E^+ ]v_2
	\mathcal R_0^+ 
	+\mathcal R_0^- v_1 [(h^- )^{-1}S+QD_0Q+E^- ]v_2
	\mathcal R_0^-. 
\end{multline}

First note that the contribution of the free resolvent terms   in \eqref{resolve2} to \eqref{stone3} immediately boils down to the scalar case because of \eqref{r0low2}. 
 
Note that using \eqref{matrixfree}, with $R_2(\lambda^2)=\frac{i}4 H_0^+(i\sqrt{2\mu+\lambda^2}|x-y|)$, we have
$\mathcal R_0^{\pm}(\mu+\lambda^2)=R_0^{\pm}(\lambda^2)M_{11}+R_2(\lambda^2)M_{22}$.  
Consider the contribution of `+' terms in \eqref{resolve2} with $QD_0Q$:
\begin{multline*}
	[R_0^{+} M_{11}+R_2M_{22}]v_1QD_0Qv_2[R_0^{+} M_{11}+R_2M_{22}]
	=R_0^+M_{11}v_1QD_0Qv_2M_{11}R_0^+\\
	+R_0^+M_{11}v_1QD_0Qv_2M_{22}R_2+R_2M_{22}v_1QD_0Qv_2M_{11}R_0^+
	+R_2M_{22}v_1QD_0Qv_2M_{22}R_2.
\end{multline*}
The bound for the first term is in  \cite[Lemma~16]{Sc2}, since $M_{11}v_1QD_0Qv_2M_{11}$ have the same cancellation (compare \eqref{Qorthogonality} above with (44) in \cite{Sc2}), and mapping properties as $vQD_0Qv$ in \cite{Sc2}, provided that $|a(x)|+|b(x)|\les\la x\ra^{-3/2-}$.
The last term is killed by the `+' and `-' cancellation.  For the second and third terms, we note that the `+' and `-'
cancellation says we need only consider
\begin{align*}
	(R_0^+-R_0^-)M_{11}v_1QD_0Qv_2M_{22}R_2+R_2M_{22}v_1QD_0Qv_2M_{11}(R_0^+-R_0^-).
\end{align*}
The following propositions finishes the proof of Theorem~\ref{lowprop2} for the contribution of $QD_0Q$ terms in \eqref{resolve2}. 
\begin{prop}\label{nonweightQDQ}
 If $|a(x)|+|b(x)|\les \la x\ra^{-1-}$, then
we have
$$
\sup_{x,y}\Big|\int_0^\infty e^{it\lambda^2}\lambda \chi(\lambda)  \big((R_0^+-R_0^-)M_{11}v_1QD_0Qv_2M_{22}R_2\big)(x,y)
d\lambda\Big|\les \f1t.
$$
The same bound holds for the contribution of $R_2M_{22}v_1QD_0Qv_2M_{11}(R_0^+-R_0^-)$.
\end{prop}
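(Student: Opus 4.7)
The matrix product $M_{11}v_1 QD_0Q v_2 M_{22}$ has its only non-zero entry at position $(1,2)$, given by the scalar kernel
\[
K(x_1,y_1) := -(a(x_1),b(x_1))\,(QD_0Q)(x_1,y_1)\,(b(y_1),a(y_1))^T.
\]
Consequently, the quantity we must bound is the absolute value of $\int_0^\infty e^{it\lambda^2}\lambda\chi(\lambda)\mathcal E(\lambda)(x,y)\,d\lambda$, where
\[
\mathcal E(\lambda)(x,y):=\tfrac{i}{2}\int\!\!\int J_0(\lambda|x-x_1|)\,K(x_1,y_1)\,R_2(\lambda^2)(y_1,y)\,dx_1\,dy_1.
\]
My plan is to apply Lemma~\ref{lem:ibp} to $\mathcal E(\lambda)$.

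The key ingredient is the orthogonality $\int K(x_1,y_1)\,dx_1=0$ for a.e.\ $y_1$. This comes from $(a,b)^T\in\ker P\subset\ker Q$ together with the self-adjointness of $QD_0Q$ on $L^2\times L^2$. The latter reduces to the self-adjointness of $T=I+v_2\mathcal G_0 v_1$, which follows from $v_2\mathcal G_0 v_1=-v\sigma_3\mathcal G_0 v$ and the symmetry of $v$, $\sigma_3$, and $\mathcal G_0$. Consequently $(QD_0Q)^*(a,b)^T=(QD_0Q)(a,b)^T=0$, which in kernel form reads $\int(a,b)(x_1)(QD_0Q)(x_1,y_1)\,dx_1=0$, and hence $\int K(x_1,y_1)\,dx_1=0$. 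In particular $\mathcal E(0)=0$, so the main boundary term in Lemma~\ref{lem:ibp} vanishes.

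Using this orthogonality I rewrite
\[
\mathcal E(\lambda)=\tfrac{i}{2}\int\!\!\int[J_0(\lambda|x-x_1|)-J_0(\lambda|x|)]\,K(x_1,y_1)\,R_2(\lambda^2)(y_1,y)\,dx_1\,dy_1,
\]
since $J_0(\lambda|x|)$ is independent of $x_1$. Interpolating between $|J_0(\lambda|x-x_1|)-J_0(\lambda|x|)|\les\lambda|x_1|$ (mean value, $|J_0'|=|J_1|\le 1$) and $\les 1$ (triangle inequality) gives $\les(\lambda|x_1|)^s$ for any $s\in[0,1]$. Choosing $s$ small enough that $|x_1|^s(|a|+|b|)\in L^2(\R^2)$ (any $s<\delta$ with $|a|+|b|\les\la\cdot\ra^{-1-\delta}$), and combining with the absolute boundedness of $QD_0Q$ on $L^2\times L^2$ and the bounds $|R_2(\lambda^2)(\cdot,y)|\les 1+\log^-|{\cdot}-y|$ and $|\partial_\lambda^k R_2|\les 1$ from \eqref{R2 bounds}, yields $|\mathcal E(\lambda)|\les\lambda^s$ uniformly in $x,y$. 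Analogous H\"older-type bounds on $\mathcal E'$ and $\mathcal E''$ follow by the same cancellation, using additionally the Bessel identity $(rJ_1(\lambda r))'=\lambda rJ_0(\lambda r)$ to rewrite
\[
\bigl||x|J_1(\lambda|x|)-|x-x_1|J_1(\lambda|x-x_1|)\bigr|=\Bigl|\int_{|x|}^{|x-x_1|}\lambda rJ_0(\lambda r)\,dr\Bigr|,
\]
which is then interpolated between $|J_0|\le 1$ and $|J_0(\lambda r)|\les(\lambda r)^{-1/2}$. Substituting the resulting bounds into Lemma~\ref{lem:ibp} closes the estimate with the desired $1/t$ decay; the second, symmetric kernel in the statement is handled identically, by using the same orthogonality on the $y_1$-integration with the roles of $(R_0^+-R_0^-)$ and $R_2$ reversed.

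The main obstacle is the uniform-in-$x$ control of $\mathcal E'(\lambda)$: naive estimates on $\partial_\lambda J_0(\lambda|x-x_1|)=-|x-x_1|J_1(\lambda|x-x_1|)$ bring in factors of $|x-x_1|\le|x|+|x_1|$ that the $\la\cdot\ra^{-1-}$ decay of $a,b$ cannot absorb in an $L^2_{x_1}$ sense. The resolution is to combine the H\"older-type cancellation on the $J_0$-difference (made available by the orthogonality) with the Bessel interpolation at large $\lambda r$, trading $|x|$-growth for $\lambda$-gains. The exponential decay of $R_2(\lambda^2)(y_1,y)$ in $|y_1-y|$ (arising from the modified Hankel function at imaginary argument) then localizes $y_1\approx y$ and provides uniform-in-$y$ control of the $y_1$-integration.
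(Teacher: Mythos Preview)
Your approach has a genuine gap in the control of $\mathcal E'(\lambda)$ (and $\mathcal E''$) uniformly in $x$, which is what Lemma~\ref{lem:ibp} requires. The orthogonality does give $\mathcal E(0)=0$ and lets you replace $J_0(\lambda|x-x_1|)$ by the difference $J_0(\lambda|x-x_1|)-J_0(\lambda|x|)$, and for $\mathcal E$ itself your interpolation $|J_0(\lambda|x-x_1|)-J_0(\lambda|x|)|\les(\lambda|x_1|)^s$ is fine. The problem is the derivative: the quantity you must bound is
\[
D:=|x|\,J_1(\lambda|x|)-|x-x_1|\,J_1(\lambda|x-x_1|)=\int_{|x-x_1|}^{|x|}\lambda r\,J_0(\lambda r)\,dr.
\]
Using $|J_0|\le 1$ gives $|D|\les |x_1|\,\lambda r$; using $|J_0(\lambda r)|\les(\lambda r)^{-1/2}$ gives $|D|\les |x_1|\,(\lambda r)^{1/2}$, where $r\sim|x|$ when $|x_1|\ll|x|$. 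The trivial bound (no cancellation) gives $|D|\les\lambda^{-1/2}|x|^{1/2}$. \emph{Every} interpolation among these carries an unavoidable factor $|x|^{1/2}$ in the regime $\lambda|x|\gtrsim 1$, so $|\mathcal E'(\lambda)|$ is not bounded uniformly in $x$, and the right-hand side of Lemma~\ref{lem:ibp} blows up in $x$. (A minor side point: $(a,b)^T\in\ker Q$, not $\ker P$, and $\ker P\not\subset\ker Q$; your intended orthogonality $QD_0Q(a,b)^T=0$ is nonetheless correct.)

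The paper's proof circumvents this by a different mechanism that does not use the orthogonality at all. One writes
\[
J_0(\lambda|x-x_1|)=\rho(\lambda|x-x_1|)+e^{i\lambda|x-x_1|}\widetilde\chi\,\omega_+(\lambda|x-x_1|)+e^{-i\lambda|x-x_1|}\widetilde\chi\,\omega_-(\lambda|x-x_1|),
\]
with $\rho$ supported on $\{\lambda|x-x_1|\les 1\}$ and $\omega_\pm=\widetilde O((1+|\cdot|)^{-1/2})$. For the $\rho$-piece a single integration by parts suffices, because $|\partial_\lambda\rho(\lambda|x-x_1|)|\les|x-x_1|\,\chi_{[0,|x-x_1|^{-1}]}(\lambda)$ integrates to $O(1)$ in $\lambda$ uniformly in $x,x_1$. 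For the oscillatory pieces the factor $e^{\pm i\lambda|x-x_1|}$ is absorbed into the phase and the stationary-phase Lemma~\ref{stat phase} is applied; the $|x-x_1|$-dependence then enters only through the critical point $\lambda_0=|x-x_1|/2t$, and the resulting estimates are uniform in $x$. This is precisely what makes the nonweighted estimate work where straightforward integration-by-parts bounds on $\mathcal E'$ cannot.
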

The following variation of stationary phase will be useful in the proof.  See Lemma~2 in \cite{Sc2}.
\begin{lemma}\label{stat phase}

	Let $\phi'(0)=0$ and $1\leq \phi'' \leq C$.  Then,
  	\begin{align*}
    		\bigg| \int_{-\infty}^{\infty} e^{it\phi(\lambda)} \mathcal E(\lambda)\, d\lambda \bigg|
    		\lesssim \int_{|\lambda|<|t|^{-\frac{1}{2}}} |\mathcal E(\lambda)|\, d\lambda
    		+|t|^{-1} \int_{|\lambda|>|t|^{-\frac{1}{2}}} \bigg( \frac{|\mathcal E(\lambda)|}{|\lambda^2|}+
    		\frac{|\mathcal E'(\lambda)|}{|\lambda|}\bigg)\, d\lambda.
  	\end{align*}

\end{lemma}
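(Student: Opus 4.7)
The operator $(R_0^+ - R_0^-) M_{11} v_1 Q D_0 Q v_2 M_{22} R_2$ has only one nonzero matrix entry (the $(1,2)$ one), with scalar kernel
\[
\tfrac{i}{2} J_0(\lambda|x - x_1|) \cdot \kappa(x_1, y_1) \cdot R_2(\lambda^2)(y_1, y),
\]
where $\kappa(x_1, y_1)$ denotes the $(1,2)$ entry of $v_1(x_1)(Q D_0 Q)(x_1, y_1) v_2(y_1)$. I would use Fubini to bring the $\lambda$-integral inside, reducing the claim to
\[
\sup_{x, y} \int_{\R^4} |\kappa(x_1, y_1)| \cdot |J(x, x_1, y_1, y; t)| \, dx_1 \, dy_1 \les \frac{1}{t},
\]
where $J(x, x_1, y_1, y; t) := \int_0^\infty e^{it\lambda^2} \lambda\, \chi(\lambda) J_0(\lambda|x-x_1|) R_2(\lambda^2)(y_1, y) \, d\lambda$.

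For the inner integral $J$, the crucial observation is that on the support of $\chi$ the factor $R_2(\lambda^2)(y_1, y)$ is smooth in $\lambda$, with $|R_2| \les 1 + \log^-|y_1 - y|$ and $|\partial_\lambda^k R_2| \les 1$ for $k \geq 1$ by \eqref{R2 bounds}. Thus $R_2$ enters only as a tame $\lambda$-multiplier, and I would mimic the scalar argument of Lemma~16 in \cite{Sc2}. I split $J_0(\lambda r)$ via a smooth cutoff at $\lambda r \approx 1$. On $\{\lambda r \les 1\}$, $J_0$ is smooth with bounded $\lambda$-derivatives, so the amplitude $\mathcal E(\lambda) = \lambda \chi(\lambda) J_0(\lambda r) R_2(\lambda^2)$ satisfies the bounds required by Lemma~\ref{stat phase} (with $\phi(\lambda) = \lambda^2$) up to a factor $1 + \log^-|y_1 - y|$, yielding a $t^{-1}$ contribution. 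On $\{\lambda r \gtrsim 1\}$, I would use $J_0(\lambda r) = \tfrac{1}{2}(e^{i\lambda r}\omega_+(\lambda r) + e^{-i\lambda r}\omega_-(\lambda r))$ with $\omega_\pm = \widetilde O((1+\lambda r)^{-1/2})$, absorb $e^{\pm i\lambda r}$ into the phase to get $t\lambda^2 \pm \lambda r$, complete the square via $\eta = \lambda \pm r/(2t)$ to move the critical point to $0$, and apply Lemma~\ref{stat phase} on the shifted variable (extending evenly in $\eta$). The outcome is
\[
|J(x, x_1, y_1, y; t)| \les \frac{1 + \log^-|y_1 - y|}{t}
\]
uniformly in $x, x_1 \in \R^2$.

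With this pointwise bound on $J$, I would close via Cauchy--Schwarz and the absolute boundedness of $QD_0Q$ on $L^2 \times L^2$:
\[
\int\int |\kappa(x_1, y_1)| (1 + \log^-|y_1 - y|) \, dx_1 \, dy_1 \les \big\||v_1|\big\|_{L^2(\R^2)} \cdot \big\||v_2|(1 + \log^-|\cdot - y|)\big\|_{L^2(\R^2)}.
\]
The hypothesis $|a|+|b| \les \la \cdot\ra^{-1-}$ makes both factors finite, and the second is uniformly bounded in $y$ because $\la y_1\ra^{-2-}$ is bounded on a ball about $y$ (where $\log^-$ is $L^2$-integrable in $\R^2$) and is itself $L^1$ away from $y$ (where $\log^- = 0$). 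The symmetric statement for $R_2 M_{22} v_1 Q D_0 Q v_2 M_{11}(R_0^+ - R_0^-)$ follows identically by swapping the roles of $x$ and $y$.

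The main obstacle is the uniform stationary-phase bound on $J$, where one must avoid logarithmic losses in $t$ across the transition region $\lambda r \approx 1$. This delicate analysis is the core of the scalar 2D argument in \cite{Sc2}; in our matrix setting the additional $R_2$ factor is in fact \emph{easier} than the scalar threshold resolvent it replaces, because, unlike $R_0^\pm$, it carries no $\log\lambda$ singularity at $\lambda = 0$ and has uniformly bounded derivatives. Once Schlag's scalar stationary-phase estimate is invoked, the remaining work is bookkeeping of the $R_2$-derivatives and the sup-integrability of $\log^-$, both of which are routine.
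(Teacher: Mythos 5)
You have proved the wrong statement. The statement to be addressed is Lemma~\ref{stat phase} itself --- the scalar stationary-phase inequality for a general amplitude $\mathcal E$ and a phase $\phi$ with a nondegenerate critical point at the origin. The paper does not prove this lemma; it cites it directly from Schlag's Lemma~2 in \cite{Sc2}. Your proposal instead gives an argument for Proposition~\ref{nonweightQDQ} --- the bound on the contribution of $(R_0^+-R_0^-)M_{11}v_1QD_0Qv_2M_{22}R_2$ to the Stone formula --- and in fact \emph{invokes} Lemma~\ref{stat phase} as a black box in the middle (``apply Lemma~\ref{stat phase} on the shifted variable''). An argument that uses the lemma being proved cannot serve as its proof.

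A self-contained proof of Lemma~\ref{stat phase} is elementary and quite different from what you wrote. Split the real line into $|\lambda|<|t|^{-1/2}$ and $|\lambda|>|t|^{-1/2}$. On the inner region, bound the oscillatory factor by $1$ to get the first term on the right-hand side. On the outer region, note that $\phi'(0)=0$ and $\phi''\geq 1$ give $|\phi'(\lambda)|\geq|\lambda|$, then integrate by parts once using $e^{it\phi}=\tfrac{1}{it\phi'}\partial_\lambda e^{it\phi}$. The resulting integrand involves
\begin{align*}
\partial_\lambda\Big(\frac{\mathcal E(\lambda)}{\phi'(\lambda)}\Big)=\frac{\mathcal E'(\lambda)}{\phi'(\lambda)}-\frac{\mathcal E(\lambda)\,\phi''(\lambda)}{\phi'(\lambda)^2},
\end{align*}
which, using $|\phi'|\geq|\lambda|$ and $\phi''\leq C$, is bounded by $|\mathcal E'|/|\lambda|+C|\mathcal E|/\lambda^2$, giving the second term (the boundary contributions at $|\lambda|=|t|^{-1/2}$ are of size $|t|^{-1/2}|\mathcal E(\pm|t|^{-1/2})|$ and are absorbed into the first term after a mild averaging or can be controlled by the same integrals). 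Your discussion of $J_0$, $R_2$, $QD_0Q$, and the $\kappa$-kernel is all downstream of this lemma and belongs to the proof of Proposition~\ref{nonweightQDQ}, not here.
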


\begin{proof}[Proof of Proposition~\ref{nonweightQDQ}]
Recall that from \eqref{R2 bounds} we have
$$
|R_2(\lambda^2)(y_1,y) |, |\partial_\lambda R_2(\lambda^2)(y_1,y) |\les 1+\log^- |y_1-y|.
$$
Also recall that
\begin{multline}\label{r0+-r0-}
 R_0^+(\lambda^2)(x,x_1)-R_0^-(\lambda^2)(x,x_1)=\frac{i}{2}J_0(\lambda|x-x_1|)\\  =\rho(\lambda|x-x_1|)+ e^{i\lambda|x-x_1|}
\widetilde\chi(\lambda|x-x_1|)\omega_+(\lambda|x-x_1|)+e^{-i\lambda|x-x_1|}\widetilde\chi(\lambda|x-x_1|)
\omega_-(\lambda|x-x_1|), \\
\rho(z)=\chi(z) [1+\widetilde O_1 (z^2)],\,\,\,\,\,\,\,\,\, \omega_\pm(z) =\widetilde O\big((1+|z|)^{-\frac{1}{2}}\big).
\end{multline}
The contribution of $\rho$ is:
$$
\int_0^\infty e^{it\lambda^2 }\lambda \chi(\lambda) \rho(\lambda|x-x_1|)
\big( M_{11}v_1QD_0Qv_2M_{22}\big)(x_1,y_1) R_2(\lambda^2)(y_1,y) dx_1dy_1
d\lambda.
$$
After an integration by parts, we  can bound the $\lambda$ integral above by
\begin{multline*}
O[t^{-1}(1+\log^- |y_1-y|)]+\f1t\int_0^\infty \Big|\f{d}{d\lambda} \big(
  \chi(\lambda) \rho(\lambda|x-x_1|)
  R_2(\lambda^2)(y_1,y) \big)\Big|
d\lambda\\= O[t^{-1}(1+\log^- |y_1-y|)].
\end{multline*}
The last equality follows from the bounds on $R_2$ and $\partial_\lambda R_2$, and by noting that 
$$|\partial_\lambda \rho(\lambda|x-x_1|)|\lesssim |x-x_1|\chi_{[0,|x-x_1|^{-1}]}(\lambda).$$
This bound suffices for the contribution of $\rho$ since 
$QD_0Q$ is absolutely bounded and $\|v_2(y_1) (1+\log^-|y-y_1|)\|_{L^2_{y_1}}\les 1$. 

For the remaining terms in \eqref{r0+-r0-}, we only consider the case of $\omega_-$ and $t>0$ (the bound for $\omega_+$ follows from an integration by parts since the phase has no critical point). The corresponding $\lambda$ integral is
$$
\int_0^\infty e^{it\lambda^2-i\lambda|x-x_1|}\lambda \chi(\lambda)\widetilde\chi(\lambda|x-x_1|) \omega_-(\lambda|x-x_1|)
 R_2(\lambda^2)(y_1,y)
d\lambda.
$$
It suffices to prove that this integral is $O[t^{-1}(1+\log^- |y_1-y|)]$.

The phase, $\phi=\lambda^2-\lambda|x-x_1|/t$, has a critical point at
$\lambda_0=|x-x_1|/2t$. Let
$$
\mathcal E(\lambda)=\lambda \chi(\lambda) \omega_-(\lambda|x-x_1|)\widetilde\chi(\lambda|x-x_1|) R_2(\lambda^2)(y_1,y).
$$
By Lemma~\ref{stat phase} we estimate the $\lambda$ integral by
\begin{align} \label{alemmabound}
		\int_{|\lambda-\lambda_0|<t^{-1/2}}|\mathcal E(\lambda)|\, d\lambda+ t^{-1}
		\int_{|\lambda-\lambda_0|>t^{-1/2}} \Big( \frac{|\mathcal E(\lambda)|}{|\lambda-\lambda_0|^2}
	    +\frac{|\mathcal E'(\lambda)|}{|\lambda-\lambda_0|}\Big)\, d\lambda.
\end{align}
The first integral in \eqref{alemmabound} is bounded by
$$
(1+\log^- |y_1-y|)\int_{|\lambda-\lambda_0|<t^{-1/2}} \f{\lambda\chi(\lambda)}{(1+\lambda|x-x_1|)^{1/2}}\, d\lambda,
$$
which is $O[t^{-1}(1+\log^- |y_1-y|)]$ if $\lambda_0\les t^{-1/2}$ (by ignoring the denominator). In the case $\lambda_0\gg t^{-1/2}$ we have  $ \lambda  \sim \lambda_0$, and thus we can bound the integral by
$$
 t^{-1/2} \f{(1+\log^- |y_1-y|) \lambda_0}{(1+\lambda_0|x-x_1|)^{1/2}}\les t^{-1/2}
\f{ (1+\log^- |y_1-y|) \lambda_0^{1/2}}{|x-x_1|^{1/2}} \les t^{-1}(1+\log^- |y_1-y|).
$$
Now note that
$$
|\mathcal E^\prime(\lambda)|\les (1+\log^- |y_1-y|) \f{\widetilde \chi(\lambda |x-x_1|)}{(1+\lambda |x-x_1|)^{1/2}}.
$$
Using this, we bound the second integral in \eqref{alemmabound} by
$$
  t^{-1}(1+\log^- |y_1-y|)\int_{|\lambda-\lambda_0|>t^{-1/2}}  \frac{\widetilde \chi(\lambda |x-x_1| )}{(1+\lambda |x-x_1|)^{1/2}} \Big(\f\lambda{|\lambda-\lambda_0|^2}+\f1{|\lambda-\lambda_0|}\Big)   \, d\lambda.
$$
We have two cases: $\lambda_0\ll t^{-\frac{1}{2}}$ and $\lambda_0\gtrsim t^{-\frac{1}{2}}$. In the former case, we have $|\lambda-\lambda_0|\approx \lambda$. Thus we can bound the integral above by
\begin{multline*}
  t^{-1}(1+\log^- |y_1-y|)\int 
  \frac{\widetilde \chi(\lambda |x-x_1| )}{ (1+\lambda |x-x_1|)^{1/2}}    \, \f{d\lambda}{\lambda} \\ = 
   t^{-1}(1+\log^- |y_1-y|)\int 
  \frac{\widetilde \chi(\lambda  )}{ (1+\lambda )^{1/2}}    \, \f{d\lambda}{\lambda}\les  t^{-1}(1+\log^- |y_1-y|).
\end{multline*}
In the latter case we bound the integral by
\begin{multline*}
  t^{-1}(1+\log^- |y_1-y|)\int_{|\lambda-\lambda_0|>t^{-1/2}}  \frac{\widetilde \chi(\lambda |x-x_1| )}{  |x-x_1|^{1/2}} \Big(\f{ \lambda_0^{1/2}}{|\lambda-\lambda_0|^2}+\f1{|\lambda-\lambda_0|^{3/2}}+\f1{ \lambda^{3/2}}\Big)   \, d\lambda \\
 \les t^{-1}(1+\log^- |y_1-y|) \Big(\f{(\lambda_0 t)^{1/2}}{|x-x_1|^{1/2}}+\f{t^{1/4}}{|x-x_1|^{1/2}}+1\Big)
 \les  t^{-1}(1+\log^- |y_1-y|).
\end{multline*}
In the last inequality we used the definition of $\lambda_0$ and the assumption that $\lambda_0\gtrsim t^{-1/2}.$

\end{proof}

We now consider the
contribution of `+' terms with $S$  in \eqref{resolve2} to \eqref{stone3}:
\begin{multline*}
	[R_0^{+} M_{11}+R_2M_{22}]\frac{v_1Sv_2}{h^+}[R_0^{+} M_{11}+R_2M_{22}]
	=\f{R_0^+M_{11}v_1Sv_2M_{11}R_0^+}{h^+}\\
	+\f{R_0^+}{h^+}M_{11}v_1Sv_2M_{22}R_2+R_2M_{22}v_1Sv_2M_{11}\f{R_0^+}{h^+}
	+\f{1}{h^+}R_2M_{22}v_1QD_0Qv_2M_{22}R_2.
\end{multline*}
The bound for the first term (for the difference of '+' and '-') is in  \cite[Lemma~17]{Sc2}, it requires that $|a(x)|+|b(x)|\les\la x\ra^{-3/2-}$.  The following propositions take care of the remaining terms.
\begin{prop}\label{nonweightS1}
 If $|a(x)|+|b(x)|\les \la x\ra^{-1-}$, then
we have
$$
\int_0^\infty e^{it\lambda^2}\lambda \chi(\lambda)  \Big(\f1{h^+(\lambda)}-\f1{h^-(\lambda)}\Big)(R_2M_{22}v_1Sv_2M_{22}R_2)(x,y)
d\lambda\\
= O\Big(\f1t  \Big).
$$
\end{prop}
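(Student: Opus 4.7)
The plan is to apply the integration-by-parts Lemma~\ref{lem:ibp} directly, because—unlike in Proposition~\ref{nonweightQDQ}—the operator $R_2(\lambda^2)$ is the resolvent at the \emph{imaginary} argument $i\sqrt{2\mu+\lambda^2}$, so it is smooth in $\lambda$ and contributes no oscillation. There is only the non-oscillatory prefactor $h^+(\lambda)^{-1}-h^-(\lambda)^{-1}$ to worry about.

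First I would make the prefactor explicit. Using $h^\pm(\lambda)=\widetilde g^\pm(\lambda)+c$ with $\widetilde g^\pm(\lambda)=-\|a^2+b^2\|_{L^1}g^\pm(\lambda)$ and $g^+(\lambda)-g^-(\lambda)=i/2$ from \eqref{g form}, a direct calculation gives
\[
\frac{1}{h^+(\lambda)}-\frac{1}{h^-(\lambda)}=\frac{iA}{(\log\lambda+B)^2+C^2}
\]
for some real constants $A,B,C$. In particular this expression tends to $0$ as $\lambda\to 0^+$, and on $(0,\lambda_1]$ one has the size/derivative bounds $O((1+\log^2\lambda)^{-1})$, $O(\lambda^{-1}(1+|\log\lambda|)^{-3})$, and $O(\lambda^{-2}(1+|\log\lambda|)^{-3})$ for the function and its first two derivatives.

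Next, set $\Phi(x,y;\lambda):=(R_2(\lambda^2)M_{22}v_1Sv_2M_{22}R_2(\lambda^2))(x,y)$. By the absolute boundedness of $S$ on $L^2\times L^2$ (Definition after Lemma~\ref{M exp lemma}) together with the pointwise bounds \eqref{R2 bounds} on $R_2$ and its $\lambda$-derivatives and the decay $|a|+|b|\lesssim\la x\ra^{-1-}$, a Cauchy--Schwarz argument as in \eqref{g0g0} yields
\[
\sup_{x,y\in\R^2,\ 0<\lambda<\lambda_1}\Big(|\Phi(x,y;\lambda)|+|\partial_\lambda\Phi(x,y;\lambda)|+|\partial_\lambda^2\Phi(x,y;\lambda)|\Big)\lesssim 1,
\]
since $\|(1+\log^-|x-\cdot|)\la\cdot\ra^{-1-}\|_{L^2}\lesssim 1$ uniformly in $x$. (This is the only real computation, and it is immediate.)

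Now take $\mathcal E(\lambda)=\chi(\lambda)(h^+(\lambda)^{-1}-h^-(\lambda)^{-1})\Phi(x,y;\lambda)$. Then $\mathcal E(0)=0$ and, combining the two previous steps,
\[
|\mathcal E'(\lambda)|\lesssim \frac{\chi(\lambda)}{\lambda(1+|\log\lambda|)^{3}},\qquad
\Big|\Big(\frac{\mathcal E'(\lambda)}{\lambda}\Big)'\Big|\lesssim \frac{\chi(\lambda)}{\lambda^3(1+|\log\lambda|)^3}
\]
for $\lambda\in(0,\lambda_1]$, uniformly in $x,y$. Applying Lemma~\ref{lem:ibp} and using the substitution $u=-\log\lambda$ to evaluate
\[
\int_0^{t^{-1/2}}\!\!\!\frac{d\lambda}{\lambda(1+|\log\lambda|)^3}\lesssim (\log t)^{-2},\qquad
\int_{t^{-1/2}}^{\lambda_1}\frac{d\lambda}{\lambda^3(1+|\log\lambda|)^3}\lesssim \frac{t}{(\log t)^3},
\]
all three terms on the right side of Lemma~\ref{lem:ibp} are $O(t^{-1}(\log t)^{-2})$, which is sharper than the claimed $O(t^{-1})$. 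For $0<t\le 2$ the integral is bounded uniformly since $\lambda(1+\log^2\lambda)^{-1}$ is integrable at $0$, so the estimate is trivial in that range.

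There is no real obstacle: the argument is a routine application of Lemma~\ref{lem:ibp} once one observes that (i) the prefactor vanishes at $\lambda=0$ and decays logarithmically, and (ii) the $R_2$-sandwich is smooth in $\lambda$ and uniformly bounded in the spatial variables thanks to absolute boundedness of $S$.
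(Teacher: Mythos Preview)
Your argument is correct and follows essentially the same route as the paper: both identify the explicit form $\frac{1}{h^+}-\frac{1}{h^-}=\frac{c}{(\log\lambda+c_1)^2+c_2^2}$, use the $R_2$ bounds \eqref{R2 bounds} together with the absolute boundedness of $S$ to make the spatial part harmless, and then integrate by parts. The only difference is cosmetic: the paper pulls out the $R_2$ factors first (reducing to a scalar $\lambda$-integral) and does a single integration by parts to get $O(t^{-1})$, whereas you bound the whole spatial sandwich $\Phi$ uniformly and feed it into Lemma~\ref{lem:ibp}, which yields the slightly sharper $O(t^{-1}(\log t)^{-2})$.
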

 \begin{prop}\label{nonweightS2}
 If $|a(x)|+|b(x)|\les \la x\ra^{-1-}$, then
we have
$$
\int_0^\infty e^{it\lambda^2}\lambda \chi(\lambda)  \Big(\frac{R_0^+}{h^+(\lambda)}-\frac{R_0^-}{h^-(\lambda)}\Big)\big(M_{11}v_1Sv_2M_{22}R_2\big)(x,y)
d\lambda\\
= O\Big(\f1t  \Big).
$$
\end{prop}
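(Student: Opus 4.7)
The plan is to mirror the proof of Proposition~\ref{nonweightQDQ}, with the additional complication from the factor $1/h^\pm(\lambda)$. Since $h^\pm(\lambda)=-c_1 g^\pm(\lambda)+c$ with $c_1=\|a^2+b^2\|_{L^1}$, a direct computation yields
\[
\frac{1}{h^+(\lambda)}-\frac{1}{h^-(\lambda)} = \frac{ic_1/2}{h^+(\lambda)h^-(\lambda)} = O(|\log\lambda|^{-2})
\]
on the support of $\chi$, together with $|\partial_\lambda(1/(h^+h^-))|\les (\lambda\log^3\lambda)^{-1}$. I then split the integrand via
\[
\frac{R_0^+(\lambda^2)}{h^+(\lambda)}-\frac{R_0^-(\lambda^2)}{h^-(\lambda)} = \frac{R_0^+(\lambda^2) - R_0^-(\lambda^2)}{h^+(\lambda)} + R_0^-(\lambda^2)\cdot \frac{ic_1/2}{h^+(\lambda)h^-(\lambda)}.
\]

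For the first piece, use $R_0^+-R_0^-=\frac{i}{2}J_0(\lambda|x-x_1|)$ and the decomposition \eqref{r0+-r0-} of $J_0$ into a cutoff $\rho$-piece and two oscillatory pieces $e^{\pm i\lambda|x-x_1|}\widetilde\chi(\lambda|x-x_1|)\omega_\pm$. Treat each exactly as in the proof of Proposition~\ref{nonweightQDQ}: the $\rho$ and $\omega_+$ parts by one integration by parts, and the $\omega_-$ part (whose phase has stationary point $\lambda_0=|x-x_1|/(2t)$) by Lemma~\ref{stat phase}. The extra factor $1/h^+(\lambda)$ is benign since $|1/h^+|\les|\log\lambda|^{-1}$ and $|\partial_\lambda(1/h^+)|\les(\lambda\log^2\lambda)^{-1}$ on the support of $\chi$. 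For the second piece, decompose $R_0^-(\lambda^2)(x,x_1)=e^{-i\lambda|x-x_1|}\rho_+(\lambda|x-x_1|)+\rho_-(\lambda|x-x_1|)$ via \eqref{freeform}. The oscillatory $\rho_+$ sub-piece is handled by Lemma~\ref{stat phase} exactly as in Proposition~\ref{nonweightQDQ}, with the $O(|\log\lambda|^{-2})$ factor only helping. The non-oscillatory $\rho_-$ sub-piece, supported on $\lambda|x-x_1|\leq 1/2$, is controlled by one integration by parts via Lemma~\ref{lem:ibp}.

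Uniform bounds in $y$ follow from the absolute boundedness of $S$ (Lemma~\ref{lem:Minverse}), the estimate $|R_2(\lambda^2)(y_1,y)|\les 1+\log^-|y_1-y|$ from \eqref{R2 bounds}, and the bound $\|\la y_1\ra^{-1-}(1+\log^-|y_1-y|)\|_{L^2_{y_1}}\les 1$ uniform in $y$. The main obstacle is obtaining uniformity in $x$: the $\rho_-$ part of $R_0^-$ carries the log-singularity $\log(\lambda|x-x_1|)$, which threatens $\log\la x\ra$ growth after integration against the $x_1$-kernel of $M_{11}v_1Sv_2M_{22}$. This growth is cancelled at the natural scale $\lambda\sim\la x\ra^{-1}$ by the $O(|\log\lambda|^{-2})$ gain from $1/(h^+h^-)$, mirroring the scalar cancellation in \cite[Lemma~17]{Sc2} and yielding the required uniform $O(1/t)$ bound.
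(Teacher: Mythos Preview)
Your approach is correct and essentially equivalent to the paper's, though the splitting is organized differently. The paper uses the $J_0/Y_0$ decomposition of $R_0^\pm$ directly, writing
\[
\frac{R_0^+}{h^+}-\frac{R_0^-}{h^-} = C\,\frac{J_0(\lambda|x-x_1|)(\log\lambda+c_1)+c_2\, Y_0(\lambda|x-x_1|)}{(\log\lambda+c_1)^2+c_2^2},
\]
so that the $J_0$ term carries the bounded coefficient $\frac{\log\lambda+c_1}{(\log\lambda+c_1)^2+c_2^2}$ (with $O(1/\lambda)$ derivative), reducing it verbatim to Proposition~\ref{nonweightQDQ}, while the $Y_0$ term carries the $O(|\log\lambda|^{-2})$ coefficient. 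Your splitting $\frac{R_0^+-R_0^-}{h^+}+R_0^-\bigl(\frac{1}{h^+}-\frac{1}{h^-}\bigr)$ achieves the same effect: the dangerous $Y_0$-type logarithm $\log(\lambda|x-x_1|)$ appears only in the second piece, where it is accompanied by $1/(h^+h^-)=O(|\log\lambda|^{-2})$.

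Both routes rest on the same pointwise inequality you allude to in your final paragraph, which the paper makes explicit: on the support $\lambda|x-x_1|\les 1$ (and $\lambda<1$),
\[
|\log(\lambda|x-x_1|)| \les |\log\lambda| + \log^-|x-x_1|,
\]
so that $\rho_-(\lambda|x-x_1|)/(h^+h^-)=(1+\log^-|x-x_1|)\,O(1)$ uniformly in $x$. Your phrasing ``cancelled at the natural scale $\lambda\sim\la x\ra^{-1}$'' is slightly imprecise---it is really this uniform pointwise bound rather than a scale-localized cancellation---but the mechanism you identify is the right one, and the reference to \cite[Lemma~17]{Sc2} is apt. One small slip: Lemma~\ref{lem:ibp} involves a second derivative; for the $\rho_-$ piece a single integration by parts (as in the $\rho$-contribution of Proposition~\ref{nonweightQDQ}) is what you actually need and suffices, since $\mathcal E(0)=0$ and the integral of $|\mathcal E'|$ is bounded by $(1+\log^-|x-x_1|)(1+\log^-|y_1-y|)$ via the inequality above.
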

\begin{proof}[Proof of Proposition~\ref{nonweightS1}]
It suffices to prove that the $\lambda $ integral is $O[t^{-1}(1+\log^- |y_1-y|)(1+\log^- |x_1-x|)]$ as in the proof of Proposition~\ref{nonweightQDQ}.
 
Noting that
\begin{align*}
    \frac{1}{h^+(\lambda)}-\frac{1}{h^-(\lambda)} =\frac{c}{(\log\lambda+c_1)^2+c_2^2},
\end{align*}
and the bounds \eqref{R2 bounds} on $R_2$ and its derivative, it suffices to prove that 
$$
\int_0^\infty e^{it\lambda^2} \frac{\lambda \chi(\lambda)}{ (\log\lambda+c_1)^2+c_2^2} d\lambda= O(1/t).
$$
This follows by a single integration by parts.
\end{proof}
\begin{proof}[Proof of Proposition~\ref{nonweightS2}]
Using \eqref{scalarfree} we have 
\begin{multline*}
    \frac{R_0^+(\lambda^2)(x,x_1)}{h^+(\lambda)}-\frac{R_0^-(\lambda^2)(x,x_1)}{h^-(\lambda)}\\
    =iJ_0(\lambda |x-x_1|)  \Big(\frac{1}{h^+(\lambda)}+\frac{1}{h^-(\lambda)} \Big)
    -Y_0(\lambda |x-x_1|) \Big(\frac{1}{h^+(\lambda)}-\frac{1}{h^-(\lambda)} \Big)
    \\=C\frac{2i J_0(\lambda |x-x_1|))(\log\lambda+c_1)+2ic_2 Y_0(\lambda |x-x_1|)}{(\log\lambda+c_1)^2+c_2^2}.
\end{multline*}
Noting the bounds 
$$
\frac{ \log\lambda+c_1 }{(\log\lambda+c_1)^2+c_2^2} =O(1), \,\,\,\,\text{ and } \partial_\lambda \Big(\frac{ \log\lambda +c_1}{(\log\lambda+c_1)^2+c_2^2}\Big)=O(1/\lambda),
$$ 
we see that the proof for the contribution of the term containing $J_0$ follows from the proof of Proposition~\ref{nonweightQDQ},
since this term satisfies the same bounds that $J_0$ does.

Essentially the same argument works for the contribution of the $Y_0$ term. Indeed, note that $Y_0$ behaves like $J_0$ for $\lambda |x-x_1| \gtrsim 1$, and for $\lambda|x-x_1|\ll 1$, we have the following harmless dependence on $|x-x_1|$: 
$$
 \frac{\chi(\lambda)\chi(\lambda|x-x_1|)Y_0(\lambda|x-x_1|)}{(\log\lambda+c_1)^2+c_2^2} =(1+\log^-|x-x_1|)\widetilde O\big(\chi(\lambda)\chi(\lambda|x-x_1|)\big).
$$
This estimate follows from the bound
$$
|\log(\lambda|x-x_1|)|\les |\log\lambda|+\log^-|x-x_1|,\,\,\,\text{ provided } \lambda|x-x_1|\les 1.
$$

\end{proof}
 
The bound for the contribution of the error term, $E^\pm$,  in \eqref{resolve2} to \eqref{stone3} follows from \cite[Lemma~18]{Sc2} since $E^\pm$ satisfies the bounds that the lemma requires and also $\mathcal R_0$ satisfies the same bounds that $R_0$ satisfies.

\section{Proof of Theorem~\ref{thm:nonweighted} for energies separated from the thresholds}

We note \cite[Lemma~3]{Sc2}, which we modify slightly to match
the notation we have employed throughout this paper.  We  define
$$
	\|W\|_{\mathcal K}:=\sup_{x\in \R^2} \int_{\R^2}
	(1+\log^-|x-y|)^2 |W(y)|\, dy.
$$
\begin{lemma}\label{SchlagLem3}
	Let $\{1,2,\dots,m\}=J\cup J^*$ be a partition.  Then
	\begin{multline}
		\sup_{L\geq 1}\sup_{x_0,x_m\in \R^2} \int_{\R^{2(m-1)}}
		\bigg|\int_0^\infty \lambda e^{i(t\lambda^2\pm\lambda \sum_{j\in J}|x_{j+1}-x_j|)} \widetilde\chi(\lambda)
		\chi(\lambda/L)\prod_{j\in J}\rho_+(\lambda|x_{j+1}-x_j|)\\
		\prod_{\ell\in J^*}\rho_-(\lambda |x_{\ell-1}-x_\ell|)
		\,d\lambda \bigg| \prod_{k=1}^{m-1}|W(x_k)| dx_1\dots
		dx_{m-1} \les |t|^{-1}\|W\|_{\mathcal K}^{m-1}
	\end{multline}
	
\end{lemma}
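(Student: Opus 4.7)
The plan is to apply a stationary-phase analysis to the $\lambda$-integral, producing a bound of size $|t|^{-1}$ with logarithmic weights in the spatial variables, and then to integrate out the variables $x_1,\dots,x_{m-1}$ one at a time, absorbing those weights into $\|W\|_{\mathcal K}$ via Cauchy--Schwarz.

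Write $d_j := |x_{j+1}-x_j|$, $D := \sum_{j\in J} d_j$, and set
$$\mathcal E(\lambda):=\lambda\,\widetilde\chi(\lambda)\chi(\lambda/L)\prod_{j\in J}\rho_+(\lambda d_j)\prod_{\ell\in J^*}\rho_-(\lambda d_\ell).$$
The phase $\phi(\lambda)=t\lambda^2\pm\lambda D$ has at most one critical point, $\lambda_0=\mp D/(2t)$. When $\lambda_0\notin(0,\infty)$, I gain $|t|^{-1}$ through one integration by parts using $|\phi'|\gtrsim|t|\lambda$ on $\mathrm{supp}\,\widetilde\chi$. Otherwise I translate $\lambda\mapsto\lambda-\lambda_0$ and apply Lemma~\ref{stat phase}. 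For the amplitude I use
$$|\rho_+(\lambda d_j)|\les(1+\lambda d_j)^{-1/2},\qquad |\partial_\lambda\rho_+(\lambda d_j)|\les d_j(1+\lambda d_j)^{-3/2},$$
and (since $\rho_-$ is supported on $\{\lambda d_\ell\leq 1/2\}$ while $\widetilde\chi$ forces $\lambda\gtrsim1$, hence $d_\ell\les1$) $|\rho_-(\lambda d_\ell)|\les 1+\log^- d_\ell$ together with $|\partial_\lambda\rho_-(\lambda d_\ell)|\les\lambda^{-1}$. Feeding these into Lemma~\ref{stat phase} and exploiting $\lambda\gtrsim1$ to tame the $(1+\lambda d_j)^{-1/2}$ factors yields
$$\Big|\int_0^\infty\lambda\,e^{i\phi(\lambda)}\mathcal E(\lambda)\,d\lambda\Big|\les |t|^{-1}\prod_{\ell\in J^*}(1+\log^- d_\ell).$$

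For the spatial integration, each $x_k$ with $1\leq k\leq m-1$ appears in at most two of the distances, namely $d_{k-1}$ and $d_k$, so the worst logarithmic weight paired with $|W(x_k)|$ is $(1+\log^- d_{k-1})(1+\log^- d_k)$. Integrating one variable at a time and applying Cauchy--Schwarz,
$$\int_{\R^2}(1+\log^- d_{k-1})(1+\log^- d_k)|W(x_k)|\,dx_k\leq \|W\|_{\mathcal K}^{1/2}\|W\|_{\mathcal K}^{1/2}=\|W\|_{\mathcal K},$$
uniformly in the remaining variables. Iterating over $x_1,\dots,x_{m-1}$ delivers the claimed factor $\|W\|_{\mathcal K}^{m-1}$.

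The main obstacle is the stationary-phase bookkeeping: one must verify that the factors $(1+\lambda_0 d_j)^{-1/2}$ arising near $\lambda_0$ from the $\rho_+$ terms, the logarithmic factors from the $\rho_-$ terms, and the $\lambda^{-1}$ gains from differentiating $\rho_-$ combine so that \emph{only one} power of $(1+\log^- d_\ell)$ per segment survives after the $\lambda$-integration (otherwise the $\mathcal K$-norm is not strong enough to absorb them). This accounting is precisely the content of Schlag's Lemma~3 in \cite{Sc2}; the decompositions \eqref{freeform}--\eqref{omega bds} for both diagonal components of $\mathcal R_0$ show that the scalar argument applies in the present matrix setting without essential modification.
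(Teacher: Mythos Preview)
The paper does not prove this lemma at all; it is stated as a direct quotation of \cite[Lemma~3]{Sc2}, with only a notational adjustment. Your sketch correctly identifies the ingredients of Schlag's proof (stationary phase via Lemma~\ref{stat phase}, the bounds on $\rho_\pm$, and iterated integration in the spatial variables against the $\mathcal K$-norm), and you yourself defer the detailed bookkeeping to the same reference, so your approach and the paper's treatment coincide.
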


In the  proof of Theorem~\ref{thm:weighted} for energies separated from the threshold,  we encountered this
integral in \eqref{high born1}. By the discussion in that proof the finite terms of the Born series in
\eqref{born series} can be written as a finite sum of terms in this form where $W$ is $\pm V_1$ or $\pm V_2$.
We note that by the decay assumptions on $V_1$ and $V_2$, we always have $\|W\|_{\mathcal K}<\infty$. Therefore Lemma~\ref{SchlagLem3} suffices to handle the contribution of the finite terms of the Born series, \eqref{born series}.

It remains to consider the contribution of the tail of the series \eqref{born series}, see \eqref{I def} and \eqref{tailcont}. 

Note that for $\lambda |x-x_1|>1$, the scalar free resolvent $R_0(\lambda^2)(x,x_1)$ has the oscillatory term $e^{\pm i\lambda |x-x_1|}$. 
If a $\lambda$ derivative hits one of the free resolvents at the edges the oscillatory term produces $|x-x_1|$  which can not be bounded uniformly in $x$.  This was not an issue in  the weighted case since we are able to allow some growth in  $x$ and $y$.

For the non-weighted case this problem is  overcome in \cite[Proposition 4]{Sc2} by changing the phase in the $\lambda$-integral 
by writing 
$$
R_0^\pm(\lambda^2)(\cdot,x)=e^{\pm i \lambda |x|} G_{\pm,x}(\lambda)(\cdot).
$$
Note that oscillatory part changes the phase in the integral and $G_{\pm,x}(\lambda)$ and its derivatives does not grow in $x$ since
differentiating  $G_{\pm,x}(\lambda)$ in $\lambda$ produces $|x-x_1|-|x|=O(|x_1|)$ (which can be killed by the decay assumption on the potential). In  \cite[Proposition 4]{Sc2}, this implies the required bound by an application of stationary phase and by using limiting absorption principle.

Since $\mathcal R_0^\pm$ satisfies the limiting absorption principle with the same weights, it suffices to see that we can define and bound the functions $G_{\pm,x}(\lambda)$ analogously. Let 
$$
\mathcal R_0^\pm(\mu+\lambda^2)(\cdot,x)=e^{\pm i \lambda |x|} \mathcal G_{\pm,x}(\lambda)(\cdot),
$$
where
$$
\mathcal G_{\pm,x}(\lambda)(x_1)= G_{\pm,x}(\lambda)(x_1)M_{11}+ e^{\mp i \lambda |x|} R_2(\lambda^2)(x_1,x) M_{22}.
$$
It suffices to consider the second summand. Using the definition of $R_2$ we have 
$$
e^{\mp i \lambda |x|} R_2(\lambda^2)(x_1,x)=e^{ \pm i\lambda (|x|-|x-x_1|)} \rho(\sqrt{2\mu+\lambda^2}|x-x_1|) 
e^{(\pm i\lambda-\sqrt{2\mu+\lambda^2})|x-x_1|},
$$
where $\rho(u)=\widetilde O(\log(u))$ for $u\in[0,1/2]$ and $\rho(u)= \widetilde O(u^{-1/2})$ for $u>1/2$. 
We note  that (see the proof of \cite[Proposition 4]{Sc2}), modulo the second exponential factor, this is identical to $G_{\pm,x}(\lambda)(x_1)$. 
Therefore the required bounds follow by noting that
$$
\partial_\lambda^k e^{(\pm i\lambda-\sqrt{2\mu+\lambda^2})|x-x_1|} = O(1),\,\,\,\,\,\,\, k=0,1,2,...
$$

\begin{large}
\noindent
{\bf Acknowledgment. \\}
\end{large}
The first author was partially supported by National Science Foundation grant DMS-1201872.  The second author acknowledges
the support of an AMS Simons Travel Grant.

\end{document}